\numberwithin{equation}{section}
\newcommand{\uloopr}[1]{\ar@'{@+{[0,0]+(-4,5)}@+{[0,0]+(0,10)}@+{[0,0] +(4,5)}}^{#1}}
\newcommand{\uloopd}[1]{\ar@'{@+{[0,0]+(5,4)}@+{[0,0]+(10,0)}@+{[0,0]+ (5,-4)}}^{#1}}
\newcommand{\dloopr}[1]{\ar@'{@+{[0,0]+(-4,-5)}@+{[0,0]+(0,-10)}@+{[0, 0]+(4,-5)}}_{#1}}
\newcommand{\dloopd}[1]{\ar@'{@+{[0,0]+(-5,4)}@+{[0,0]+(-10,0)}@+{[0,0 ]+(-5,-4)}}_{#1}}
\newcommand{\luloop}[1]{\ar@'{@+{[0,0]+(-8,2)}@+{[0,0]+(-10,10)}@+{[0, 0]+(2,2)}}^{#1}}
\DeclareSymbolFont{SY}{U}{psy}{m}{n}
\DeclareMathSymbol{\emptyset}{\mathord}{SY}{'306}
\DeclareMathSymbol{\newtimes}{\mathbin}{SY}{'264}
\newcommand{\R}{\mathbb{R}}
\newcommand{\C}{\mathbb{C}}
\newcommand{\Z}{\mathbb{Z}}
\newcommand{\N}{\mathbb{N}}
\newcommand{\cA}{{\mathcal A}}
\newcommand{\cB}{{\mathcal B}}
\newcommand{\cC}{{\mathcal C}}
\newcommand{\cF}{{\mathcal F}}
\newcommand{\cH}{{\mathcal H}}
\newcommand{\cK}{{\mathcal K}}
\newcommand{\cL}{{\mathcal L}}
\newcommand{\cO}{{\mathcal O}}
\newcommand{\cT}{{\mathcal T}}
\newcommand{\K}{\mathbb{K}}
\newcommand{\ol}{\overline}
\newcommand{\B}{\mathcal{B}}
\renewcommand{\1}{\mathbbm 1}
\newtheorem{theorem}{Theorem}[section]{\bf}{\it}
\newtheorem{proposition}[theorem]{Proposition}{\bf}{\it}
\newtheorem{corollary}[theorem]{Corollary}{\bf}{\it}
\newtheorem{lemma}[theorem]{Lemma}{\bf}{\it}
{\bf}{\it}
\newtheorem{problem}[theorem]{Problem}{\bf}{\it}
{\bf}{\it}
\theoremstyle{definition}
\newtheorem{definition}[theorem]{Definition}%{\it}{\rm}
\newtheorem{remark}[theorem]{Remark}%{\it}{\rm}
\newtheorem{example}[theorem]{Example}%{\it}{\rm}
\theoremstyle{plain}
\newcounter{theoremintro}
\newtheorem{thmintro}[theoremintro]{Theorem}
\newtheorem{introcorollary}[theoremintro]{Corollary}{\bf}{\it}
\theoremstyle{definition}
\DeclareMathAlphabet{\Ma}{U}{msa}{m}{n}
\DeclareMathAlphabet{\Mb}{U}{msb}{m}{n}
\DeclareMathAlphabet{\Meuf}{U}{euf}{m}{n}
\DeclareSymbolFont{ASMa}{U}{msa}{m}{n}
\DeclareSymbolFont{ASMb}{U}{msb}{m}{n}
\DeclareMathSymbol{\hrist}{\mathord}{ASMa}{"16}
\DeclareMathSymbol{\varkappa}{\mathalpha}{ASMb}{"7B}
\DeclareMathSymbol{\CrPr}{\mathord}{ASMb}{"6F}
\def\got#1{\Meuf{#1}}
\def\ot #1.{{\got{#1}}}
\title{Amenability and uniform Roe algebras}
\author[Pere Ara]{Pere Ara$^{1}$}
\address{Department of Mathematics,
  Universitat Aut\`onoma de Barcelona, 08193 Bellaterra (Bar\-ce\-lona), Spain}
\email{para@mat.uab.cat}
\author[Kang Li]{Kang Li$^{2}$}
\address{Department of Mathematics, University of M\"{u}nster,
  Einsteinstr. 62, 48149 M\"{u}nster, Germany}
 \email{lik@uni-muenster.de} 
\author[Fernando Lled\'o]{Fernando Lled\'o$^{3}$}
\address{Department of Mathematics, University Carlos~III Madrid,
  Avda.~de la Universidad~30, 28911 Legan\'es (Madrid), Spain
  and Instituto de Ciencias Matem\'{a}ticas (CSIC - UAM - UC3M - UCM).}
\email{flledo@math.uc3m.es}
\author[Jianchao Wu]{Jianchao Wu$^{4}$}
\address{Department of Mathematics, Penn State University, 109 McAllister Building, University Park, PA 16802, USA}
\email{jianchao.wu@psu.edu}
\date{\today}
\thanks{{$^{1}$} Supported by the grants DGI MICIIN MTM2011-28992-C02-01 and MINECO MTM2014-53644-P.}
\thanks{{$^{2}$} Supported by ERC Advanced Grant no.~OAFPG 247321, the Danish National Research Foundation through the Centre for Symmetry and 
Deformation (DNRF92) and the Danish Council for Independent Research (DFF-5051-00037).}
\thanks{{$^{3}$} Supported by Spanish Ministry of Economy and Competitiveness through projects DGI MTM2014-54692-P and
the {\em Severo Ochoa} Programe for Centers of Excellence in R\&D (SEV-2015-0554).}
\thanks{{$^{4}$} Supported by NSF grant DMS 1564281; previously supported by ERC Advanced Grant ToDyRiC 267079 and SFB 878 \emph{Groups, Geometry and Actions}.}
\subjclass[2010]{43A07, 46L05, 20F65, 53C23}
\keywords{Amenability, coarse space, F\o lner conditions, semi-pre-$C^*$-algebra, uniform Roe algebras, traces.}
\begin{document}

\begin{abstract}
Amenability for groups can be extended to metric spaces, algebras over commutative fields and $C^*$-algebras by adapting the notion 
of F\o lner nets. In the present article we investigate the close ties among these extensions and show that these three pictures unify in the context of the uniform Roe algebra $C_\mathrm{u}^*(X)$ 
over a metric space $(X,d)$ with bounded geometry. In particular, we show that the following conditions are equivalent:
(1) $(X,d)$ is amenable; 
(2) the translation algebra generating $C_\mathrm{u}^*(X)$ is algebraically amenable
(3) $C_\mathrm{u}^*(X)$ has a tracial state; 
(4) $C_\mathrm{u}^*(X)$ is not properly infinite; 
(5) $[1]_0\neq [0]_0$ in the $K_0$-group $K_0(C_\mathrm{u}^*(X))$; 
(6) $C_\mathrm{u}^*(X)$ does not contain the Leavitt algebra as a unital $*$-subalgebra; 
(7) $C_\mathrm{u}^*(X)$ is a F\o lner $C^*$-algebra in the sense that it admits a net of unital completely positive maps into matrices 
which is asymptotically multiplicative in the normalized trace norm.
We also show that every possible tracial state of the uniform Roe algebra $C_\mathrm{u}^*(X)$ is amenable.
\end{abstract}

\maketitle

\tableofcontents

%%%%%%%%%%%%%%%%%%%%%%%%%%%%%%%%%%%%%%%%%%%%%%%%%%%%%%%%%%%%%
\section{Introduction}\label{sec:intro}
%%%%%%%%%%%%%%%%%%%%%%%%%%%%%%%%%%%%%%%%%%%%%%%%%%%%%%%%%%%%%

The notion of amenable groups was introduced by von Neumann in \cite{Neumann29} in his study of the Banach-Tarski paradox. It is defined by the existence of a finitely additive mean which is invariant under translations in the group. These groups correspond, precisely,
to those that can not be paradoxically decomposed, a result that is known as Tarski's alternative (see, e.g., \cite{T-49,dS-86,Wagon16}). 
F\o lner gave in \cite{Foelner55} another characterization of amenability corresponding to an internal approximation of the group in terms of finite
subsets that have controlled growth. More precisely, we say that a discrete group $\Gamma$ satisfies the F\o lner condition if for every
$\epsilon>0$ and every finite subset $E$ of $\Gamma$, there exists a non-empty finite subset $F$ of $\Gamma$ such that
 \begin{align*}
 \frac{|\gamma F \Delta F|}{|F|}<\epsilon\quad \text{for all } \gamma\in E \; ,
 \end{align*}
where $\Delta$ denotes the symmetric difference. Amenability has been shown to be a rich concept that quickly spread to other areas
such as harmonic analysis, ergodic theory, etc.~(see \cite{KL17}). 

Of particular interest for us in this article are the notions of amenability for bounded geometry metric spaces, algebras and operator algebras.
Amenability for metric spaces is defined by an isoperimetric condition in complete analogy to F\o lner's condition mentioned above (see
\cite{Block-Weinberger-92} and Definition~\ref{def:metric-amenability}). 
In this context one can prove many results similar to the case of groups. 
In particular, it is shown that Tarski's alternative regarding amenability vs.~paradoxical decompositions is still true for metric spaces and amenability is invariant under coarse equivalences among bounded geometry metric spaces (see \cite{Silberstein-Grigorchuk-Harpe-99} and references cited therein).

In the late nineties, Elek (\cite{Elek97}) was able to characterize amenability of a discrete group $\Gamma$ in terms of its algebraic uniform Roe algebra\footnote{It is called Gromov's translation algebra in Elek's paper.}
$C^*_\mathrm{u,alg}(\Gamma)$. It is well-known that the
algebraic uniform Roe algebra $C^*_\mathrm{u,alg}(\Gamma)$ is canonically isomorphic to the algebraic crossed product of $\ell^\infty(\Gamma)$ (the commutative algebra of bounded complex functions on $\Gamma$) by the group
$\Gamma$ with respect to the translation action (see, e.g., Lemma 4.30 in \cite{Roe03}). Elek used the F\o lner condition
of amenable groups and the paradoxical decomposition of non-amenable groups to show that a discrete group $\Gamma$ is amenable if and only if its
algebraic uniform Roe algebra $C^*_\mathrm{u,alg}(\Gamma)$ admits a tracial state if and only if $[1]_0\neq [0]_0$ in the algebraic $K$-group
$K_0(C^*_\mathrm{u,alg}(\Gamma))$. Moreover, it is shown by R\o rdam and Sierakowski in \cite{RS12} that a discrete group $\Gamma$ is non-amenable if
and only if the reduced $C^*$-crossed product $\ell^\infty(\Gamma)\rtimes_r \Gamma$ is properly infinite if and only if
$M_k(\ell^\infty(\Gamma)\rtimes_r \Gamma)$ is \emph{properly infinite} for some $k\in \N$. Recall that a unital $C^*$-algebra $A$ is properly infinite if there exist two isometries $x$ and $y$ in $A$ such that $xx^*+yy^*\leq 1$. This notion may be considered as a strong negation of the existence of tracial states, and an analogous definition can be made for general algebras over a commutative field. 

Elek introduced in \cite{Elek03} the notion of \emph{algebraic amenability} for finitely generated unital algebras over (commutative) fields.
His definition resembles F\o lner's characterization, 
with finite subsets replaced by finite-dimensional linear subspaces and cardinalities replaced by dimensions. In our paper
\cite{ALLW-1}, we studied this notion in the setting of general algebras (see Definition~\ref{def:alg-amenable2} below). Following the footsteps of R\o rdam and
Sierakowski, we showed that a metric space $X$ is amenable if and only if its associated translation algebra $\C_\mathrm{u}(X)$ is algebraically amenable if and
only if $\C_\mathrm{u}(X)$ is properly infinite if and only if $\C_\mathrm{u}(X)$ does not contain the Leavitt algebra as a unital $*$-subalgebra. 
In Section~\ref{sec:amenability-metric-algebras} we provide the basic definitions and results on metric space and algebraic amenability 
needed later. We refer the reader to \cite{ALLW-1} for proofs and further motivations.

The notion of F\o lner condition has also been considered in the $C^*$-algebraic setting (see, e.g., \cite{Connes76,ConnesIn76,Bedos95,AL14}). 
The first-named and third-named authors defined in \cite{AL14} the class of F\o lner $C^*$-algebras\footnote{This class coincides with the class
of weakly hypertracial $C^*$-algebras, cf., \cite{Bedos95}.}
as those unital $C^*$-algebras $\cA$ that admit a net of unital completely positive (u.c.p.) maps
 $\varphi_i\colon  \cA\to M_{k(i)}(\C)$ which is asymptotically multiplicative in the normalized trace norm, i.e.,
 \begin{equation*}
   \lim_i\|\varphi_i(ab)-\varphi_i(a)\varphi_i(b)\|_{2,\mathrm{tr}}=0\; , \quad \text{for all } a,b\in \cA \;,
 \end{equation*}
 where $\|x\|_{2,\mathrm{tr}}\colon=\sqrt{\mathrm{tr}(x^*x)}$, $x\in M_{n}(\C)$ and 
$\mathrm{tr}(\cdot)$ denotes the unique tracial state on $M_{n}(\C)$. It is shown in \cite{Bedos95} that the reduced $C^*$-crossed product 
$\ell^\infty(\Gamma)\rtimes_r \Gamma$ is a F\o lner $C^*$-algebra if and only if the group $\Gamma$ is amenable if and only if
$\ell^\infty(\Gamma)\rtimes_r \Gamma$ admits an amenable trace $\tau$, i.e., there exists a state $\psi$ on $\cL(\ell^2(\Gamma))$ which extends
$\tau$ and satisfies
$$
 \psi(TA)=\psi(AT), \quad \text{for all } T\in \cL(\ell^2(\Gamma)) \text{ and } A\in \ell^\infty(\Gamma)\rtimes_r \Gamma \; .
$$

In this article, we will give a unifying point of view to the different approaches to amenability as mentioned above. 
In order to do this, we generalize in Section~\ref{sec:Foelner-star} the notion of F\o lner $C^*$-algebras 
given in \cite{AL14} (see also \cite{Bedos95}) to semi-pre-$C^*$-algebras as introduced by Ozawa in \cite{Ozawa13}. 
This class of $*$-algebras includes (pre-)$C^*$-algebras and allows us to characterize at the algebraic level 
different analytic aspects of group $C^*$-algebras and uniform Roe algebras (see, e.g., Examples~\ref{ex:GA} and~\ref{ex:semi-Roe}). 
In Theorem~\ref{theorem:characFolner2}, we give a variety of characterizations of F\o lner semi-pre-$C^*$-algebras, which generalizes the main result
in \cite{AL14}. As an application of this result, we prove the following relation between algebraic amenability and F\o lner $C^*$-algebras.
\begin{thmintro}[cf., Theorem~\ref{theorem:alg-amen-to-Folner}]\label{thmintro-1}
 Let $\cA$ be a unital pre-$C^*$-algebra which is algebraically amen\-able. Then its closure $\overline{\cA}$ is a F\o lner $C^*$-algebra.
\end{thmintro}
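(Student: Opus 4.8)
The plan is to route everything through Theorem~\ref{theorem:characFolner2}: regarding $\cA$ as a semi-pre-$C^*$-algebra via its given $C^*$-norm, it suffices to check that $\cA$ is a F\o lner semi-pre-$C^*$-algebra, i.e.\ to produce a net of unital completely positive maps $\varphi_i\colon\cA\to M_{k(i)}(\C)$ with $\|\varphi_i(ab)-\varphi_i(a)\varphi_i(b)\|_{2,\mathrm{tr}}\to 0$ for all $a,b\in\cA$; Theorem~\ref{theorem:characFolner2} then upgrades this to the statement that $\overline{\cA}$ is a F\o lner $C^*$-algebra (and in fact the maps below are defined on all of $\overline{\cA}$, so the property for $\overline{\cA}$ also follows directly by density and contractivity of the $\varphi_i$). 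First I would unwind algebraic amenability (Definition~\ref{def:alg-amenable2}) to fix a net $(V_i)_i$ of nonzero finite-dimensional subspaces of $\cA$, indexed by pairs (finite subset of $\cA$, $\varepsilon>0$), which is a \emph{left} F\o lner net — this we may do by \cite{ALLW-1}, or simply by replacing $(V_i)_i$ with $(V_i^*)_i$ — so that, writing $n_i:=\dim V_i$ and $\delta_i(b):=\dim(bV_i+V_i)-\dim V_i$, we have $\delta_i(b)/n_i\to 0$ for every $b\in\cA$.

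The key step, and the one I expect to be the main obstacle, is to convert the \emph{abstract} subspaces $V_i\subseteq\cA$ into genuine finite-rank projections whose rank is \emph{exactly} $n_i$; a naive compression in a fixed faithful representation can collapse dimension. My plan is to fix a faithful representation $\pi\colon\overline{\cA}\to\cB(H)$ and, for each $i$, pass to the $n_i$-fold amplification $\pi^{(n_i)}\colon\overline{\cA}\to\cB(H^{\oplus n_i})$. A short greedy argument then produces a vector $\xi_i=(\eta_1,\dots,\eta_{n_i})\in H^{\oplus n_i}$ that is \emph{separating for $V_i$}, i.e.\ $v\mapsto\pi^{(n_i)}(v)\xi_i$ is injective on $V_i$: setting $V^{(0)}:=V_i$, choose $\eta_k$ so that $V^{(k)}:=\{v\in V^{(k-1)}:\pi(v)\eta_k=0\}$ is a \emph{proper} subspace of $V^{(k-1)}$ whenever $V^{(k-1)}\neq\{0\}$ — such $\eta_k$ exists, since otherwise $\pi(V^{(k-1)})=\{0\}$ and hence $V^{(k-1)}=\{0\}$ by faithfulness — so that after $n_i$ steps $V^{(n_i)}=\{v\in V_i:\pi(v)\eta_k=0\ \forall k\}=\{0\}$. (Here the amplification, equivalently letting the matrix size grow with $i$, is genuinely needed: in a single faithful representation there may be no separating vector even for a $2$-dimensional subspace.) Letting $P_i$ be the orthogonal projection of $H^{\oplus n_i}$ onto $\pi^{(n_i)}(V_i)\xi_i$, separation gives $\mathrm{rank}(P_i)=n_i$, and I define $\varphi_i\colon\overline{\cA}\to M_{n_i}(\C)\cong\cB\big(P_iH^{\oplus n_i}\big)$ by $\varphi_i(a):=P_i\pi^{(n_i)}(a)P_i$, which is u.c.p.\ with $\varphi_i(1)=1$.

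Finally I would verify asymptotic multiplicativity by a rank-versus-norm estimate. For $a,b\in\cA$ one has $\varphi_i(ab)-\varphi_i(a)\varphi_i(b)=P_i\pi^{(n_i)}(a)\,(1-P_i)\,\pi^{(n_i)}(b)P_i$. Choosing a complement $W_b$ of $V_i$ inside $bV_i+V_i$, so that $bV_i\subseteq V_i\oplus W_b$ with $\dim W_b=\delta_i(b)$, we get $\pi^{(n_i)}(b)\big(P_iH^{\oplus n_i}\big)=\pi^{(n_i)}(bV_i)\xi_i\subseteq P_iH^{\oplus n_i}+\pi^{(n_i)}(W_b)\xi_i$, so that $(1-P_i)\pi^{(n_i)}(b)P_i$, and hence the whole expression $\varphi_i(ab)-\varphi_i(a)\varphi_i(b)$, has rank at most $\delta_i(b)$ and operator norm at most $\|a\|\,\|b\|$. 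Since any $x\in M_{n_i}(\C)$ of rank $r$ satisfies $\|x\|_{2,\mathrm{tr}}\le\sqrt{r/n_i}\,\|x\|$, this yields $\|\varphi_i(ab)-\varphi_i(a)\varphi_i(b)\|_{2,\mathrm{tr}}\le\|a\|\,\|b\|\sqrt{\delta_i(b)/n_i}\to 0$. Thus $\cA$ is a F\o lner semi-pre-$C^*$-algebra, and Theorem~\ref{theorem:characFolner2} completes the proof. Apart from the separating-vector construction, everything here is bookkeeping.
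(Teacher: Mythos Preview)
Your proof is correct, and the core estimate --- bounding the rank of $(1-P_i)\pi(b)P_i$ by $\dim(bV_i+V_i)-\dim V_i$ and then using rank-versus-norm for $\|\cdot\|_{2,\mathrm{tr}}$ --- is essentially the same as the paper's. The route to that estimate, however, is genuinely different.

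The paper does \emph{not} amplify. Instead it first reduces to the countably-dimensional case, where $\overline{\cA}$ is separable and hence carries a faithful \emph{state} $\phi$; in the associated GNS representation $(\pi,\cH,\Omega)$ the map $A\mapsto\pi(A)\Omega$ is globally injective on $\cA$, so the projection onto $\pi(W)\Omega$ automatically has rank $\dim W$. One then estimates $\|(\1-P)\pi(B)P\|_2$ directly (verifying condition~(\ref{theorem:characFolner2-nonzero-Foelner}) of Theorem~\ref{theorem:characFolner2}) and handles the general case by passing to a countably-dimensional algebraically amenable subalgebra via Proposition~\ref{pro:alg-amenability-countability}. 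Your approach instead fixes an \emph{arbitrary} faithful representation and, for each $V_i$, amplifies by $n_i=\dim V_i$ and builds a separating vector by the greedy descent; this lets you verify condition~(\ref{theorem:characFolner2-definition}) directly and avoids the separability reduction altogether, at the cost of having the representation vary with $i$ (which is harmless thanks to Theorem~\ref{theorem:characFolner2}). Your parenthetical that amplification is ``genuinely needed'' is true for a generic faithful representation, but the paper's point is precisely that a well-chosen representation (GNS of a faithful state) obviates it. The side remark about passing to $(V_i^*)_i$ is unnecessary: Definition~\ref{def:alg-amenable2} already gives left F\o lner subspaces, which is exactly what your estimate on $(1-P_i)\pi^{(n_i)}(b)P_i$ uses.
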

However, the converse implication is in general not true:
a F\o lner $C^*$-algebra may contain a dense subalgebra not satisfying algebraic amenability (see Remark~\ref{rem:counter-ex}).

In section~\ref{sec:alg-amenable2}, we present a confluence point of all the approaches to amenability mentioned above and prove a unifying theorem that generalizes results in \cite{Elek97}, \cite{RS12} and \cite{ALLW-1}:
\begin{thmintro}[cf., Theorem~\ref{theorem:main1}]\label{thmintro-2}
Let $(X,d)$ be an extended metric space with bounded geometry, $\cA$ be a unital pre-$C^*$-algebra that contains the translation algebra
$\C_\mathrm{u}(X)$ 
as a dense $*$-subalgebra. Let $k,n$ be positive integers with $n\geq 2$.
Then the following conditions are equivalent: 
\begin{enumerate}
 \item $(X,d)$ is amenable.
 \item $C^*_\mathrm{u,alg}(X)$ is algebraically amenable. 
 \item $\cA$ is a F\o lner pre-$C^*$-algebra. 
 \item $\cA$ has a tracial state.
 \item $M_k(\cA)$ is not properly infinite. 
 \item $\cA$ does not contain the Leavitt algebra $L(1,n)$ as a unital $*$-subalgebra. 
 \item $\cA$ does not contain the Leavitt algebra $L_\infty$ as a unital $*$-subalgebra.
 \item $[1]_0\neq [0]_0$ in the (algebraic) $K$-group $K_0(\cA)$.
\end{enumerate}
\end{thmintro}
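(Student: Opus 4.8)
The natural strategy is to prove the equivalences by establishing a cycle of implications, supplemented by a few separate biconditional links, using the tools already developed in the excerpt. I would organize it around the following backbone: $(1)\Rightarrow(2)$, $(2)\Rightarrow(3)$, $(3)\Rightarrow(4)$, $(4)\Rightarrow(5)$, $(5)\Rightarrow(6)\Rightarrow(7)\Rightarrow(1)$, and then close the loop through $(8)$ by showing $(4)\Rightarrow(8)\Rightarrow(5)$ or directly $(5)\Leftrightarrow(8)$. Several of these arrows are essentially already in hand. The implication $(1)\Rightarrow(2)$ is the statement that metric amenability of $(X,d)$ implies algebraic amenability of the translation algebra; this is from \cite{ALLW-1} and should be quoted from the material in Section~\ref{sec:amenability-metric-algebras}. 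The implication $(2)\Rightarrow(3)$ is precisely Theorem~\ref{thmintro-1} (Theorem~\ref{theorem:alg-amen-to-Folner}): if the dense subalgebra $\C_\mathrm{u,alg}(X)\subseteq\cA$ is algebraically amenable, then $\overline{\cA}=C^*_\mathrm{u}(X)$ is a F\o lner $C^*$-algebra, and one checks that the F\o lner property for the completion together with density gives the F\o lner \emph{pre-}$C^*$-algebra property for $\cA$ itself.

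The analytic core is the block $(3)\Rightarrow(4)\Rightarrow(5)$ and the return trip $(7)\Rightarrow(1)$. For $(3)\Rightarrow(4)$: from an asymptotically multiplicative net of u.c.p.\ maps $\varphi_i\colon\cA\to M_{k(i)}(\C)$, compose with the normalized traces $\mathrm{tr}_{k(i)}$ and take a weak-$*$ limit point of $\mathrm{tr}_{k(i)}\circ\varphi_i$; the asymptotic multiplicativity in $\|\cdot\|_{2,\mathrm{tr}}$ forces the limit functional to be a trace, and unitality of the $\varphi_i$ makes it a state. This is the standard hypertrace argument and should follow from the characterizations in Theorem~\ref{theorem:characFolner2}. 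For $(4)\Rightarrow(5)$: a tracial state $\tau$ on $\cA$ induces a state on $K_0(\cA)$ via $[\,p\,]_0\mapsto\tau(p)$ (extended to $M_k(\cA)$ by $\tau\otimes\mathrm{Tr}$), which sends $[1]_0$ to $1$ and $[0]_0$ to $0$, hence $[1]_0\neq[0]_0$. The implications $(5)\Rightarrow(6)\Rightarrow(7)$ are the easy contrapositive directions: a unital copy of the Leavitt algebra $L(1,n)$ inside $\cA$ makes $[1]_0=[0]_0$ since in $L(1,n)$ one has $[1]_0=n[1]_0$ hence $(n-1)[1]_0=0$, so $[1]_0=[0]_0$ by the relation $[1]_0 = [1]_0 + (n-1)[1]_0$... more carefully, $L(1,n)$ witnesses proper infiniteness and kills the $K_0$-class, and $L_\infty$ contains $L(1,n)$, giving $(6)\Rightarrow(7)$ trivially. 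The genuinely substantive return is $(7)\Rightarrow(1)$ — equivalently, the contrapositive $\neg(1)\Rightarrow\neg(7)$: if $(X,d)$ is \emph{not} amenable, then by the Tarski-type alternative for metric spaces (Block--Weinberger, as cited), $X$ admits a paradoxical decomposition, which one uses to manufacture partial isometries in $\C_\mathrm{u,alg}(X)\subseteq\cA$ assembling into a unital copy of $L_\infty$ (and hence of $L(1,n)$). This is the Elek/R\o rdam--Sierakowski mechanism adapted to the coarse-geometric setting; it is carried out in \cite{ALLW-1} for proper infiniteness and the Leavitt algebra, and I would lean on exactly that construction.

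The remaining item $(8)$ I would attach by proving $(4)\Rightarrow(8)$ — already sketched above via the trace-induced state on $K_0$ — and $(8)\Rightarrow(6)$ by contraposition: if $\cA\supseteq L(1,n)$ unitally then $[1]_0=[0]_0$ in $K_0(\cA)$ as noted. Together with $(6)\Rightarrow(7)\Rightarrow(1)\Rightarrow(2)\Rightarrow(3)\Rightarrow(4)$ this closes every loop and also threads $(5)$ in (since $(4)\Rightarrow(5)\Rightarrow(6)$). I would also handle $(5)\Rightarrow(6)$ directly rather than through $(8)$: a unital $L(1,n)\hookrightarrow\cA$ yields $[1]_0 = n\cdot[1]_0$ in $K_0(\cA)$, and combined with $[1]_0=[1]_0+(n-1)\cdot 0$-type bookkeeping one concludes $[1]_0=[0]_0$; the cleanest route is to note $L(1,n)$ is properly infinite, so $[1]_0$ is a ``properly infinite'' class, and over any unital ring a properly infinite $[1]_0$ with $[1]_0=[1]_0\oplus[1]_0$ collapses in the Grothendieck group — this is a general $K_0$ fact I would cite.

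\textbf{Main obstacle.} The one step that is not bookkeeping is $(7)\Rightarrow(1)$ (equivalently $\neg(1)\Rightarrow\neg(7)$): translating non-amenability of the coarse space $(X,d)$ into an explicit unital embedding of the Leavitt algebra into the translation algebra. The subtlety is that the paradoxical decomposition must be realized by \emph{bounded-propagation} partial isometries so that they genuinely live in $\C_\mathrm{u,alg}(X)$ (not merely in the Roe algebra's completion), and one must verify the Leavitt relations $s_j^*s_j=1$, $\sum s_j s_j^*\le 1$ hold on the nose. Bounded geometry is exactly what guarantees the pieces of the paradoxical decomposition can be taken at bounded distance, so the propagation is controlled — but making this precise is where the real work sits, and I would rely on the corresponding argument in \cite{ALLW-1}, indicating the (minor) modifications needed to pass to an arbitrary intermediate algebra $\cA$ with $\C_\mathrm{u,alg}(X)\subseteq\cA\subseteq C^*_\mathrm{u}(X)$.
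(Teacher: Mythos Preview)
Your plan tracks the paper's proof closely: the same cycle $(1)\Rightarrow(2)\Rightarrow(3)\Rightarrow(4)\Rightarrow(5)$, the same use of Theorem~\ref{theorem:alg-amen-to-Folner} and Theorem~\ref{theorem:characFolner2}, the same $K_0$ argument for $(4)\Rightarrow(8)$, and the same identification of the paradoxical-decomposition step as the substantive one. Two points need correction, however.

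First, a bookkeeping slip: what you describe under ``$(4)\Rightarrow(5)$'' (the trace-induced homomorphism on $K_0$ showing $[1]_0\neq[0]_0$) is the argument for $(4)\Rightarrow(8)$. The actual $(4)\Rightarrow(5)$ is the elementary observation that a tracial state $\tau$ extends to $M_k(\cA)$ and then $2k=\tau_k(XX^*+YY^*)\leq\tau_k(\1_k)=k$ rules out proper infiniteness.

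Second, and more seriously, your justification of $(6)\Rightarrow(7)$ via ``$L_\infty$ contains $L(1,n)$'' is wrong. The algebra $L_\infty$ does \emph{not} contain $L(1,n)$ as a unital subalgebra: in $L_\infty$ the relations $Y_jX_i=\delta_{ij}\1$ hold but there is no relation $\sum X_iY_i=\1$, and one cannot manufacture one algebraically. The implication flows the other way. The paper handles $(6)$ and $(7)$ through an auxiliary condition ``$\cA$ does not contain $L(1,2)$'': one has $(6)\Rightarrow(6')$ via the standard embedding $L(1,n)\hookrightarrow L(1,2)$, and $(6')\Rightarrow(1)$ because a paradoxical decomposition $X=X_+\sqcup X_-$ with partial translations $t_\pm\colon X\to X_\pm$ yields isometries $V_{t_+},V_{t_-}\in\C_\mathrm{u}(X)$ satisfying exactly the $L(1,2)$ relations. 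Condition $(7)$ is then linked via Proposition~5.2 of \cite{ALLW-1} (a unital algebra is properly infinite iff $L_\infty$ embeds unitally), giving $(5)\Rightarrow(7)$ and $(7)\Rightarrow(6')$ directly. So your ``main obstacle'' paragraph should also be adjusted: the paradoxical decomposition produces $L(1,2)$ (not $L_\infty$) inside $\C_\mathrm{u}(X)$, and from there one deduces both $\neg(6)$ (via $L(1,n)\hookrightarrow L(1,2)$) and $\neg(7)$ (via proper infiniteness $\Rightarrow L_\infty\hookrightarrow\cA$).
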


It is known that in the context of group $C^*$-algebras, the amenability of a discrete group $\Gamma$ is equivalent to a number of properties for $C^*(\Gamma)$, e.g., nuclearity, quasidiagonality, and the F\o lner property (cf., Example~\ref{ex:Rosenberg}). Uniform Roe algebras provide examples 
that show that, in general, nuclearity and F\o lner property are independent notions in $C^*$-algebras (see Remark~\ref{rem:property-A}).

In this section we also analyze the trace space of uniform Roe $C^*$-algebras. We show that for an amenable metric spaces with bounded geometry, every
tracial state on a uniform Roe $C^*$-algebra is amenable. 

\begin{introcorollary}[see Corollary~\ref{amenable trace}]
Let $(X,d)$ be an extended metric space with bounded geometry and denote by $\cA$ the uniform Roe algebra $C_\mathrm{u}^*(X)$ or the 
maximal uniform Roe-algebra $C_\mathrm{u,max}^*(X)$. Then $\cA$ has a tracial state if and only if $(X,d)$ is amenable. In this case every
tracial state of $\cA$ is amenable.
\end{introcorollary}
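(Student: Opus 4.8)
The assertion that $\cA$ has a tracial state if and only if $(X,d)$ is amenable is immediate from Theorem~\ref{theorem:main1}: both $C_\mathrm{u}^*(X)$ and $C_\mathrm{u,max}^*(X)$ are unital $C^*$-algebras, hence unital pre-$C^*$-algebras, containing the translation algebra $\C_\mathrm{u}(X)$ as a dense $*$-subalgebra, so the equivalence $(1)\Leftrightarrow(4)$ there applies to each. The substance of the corollary is the last sentence, and amenability of $X$ is not used for it beyond guaranteeing that traces exist at all. So fix a tracial state $\tau$ on $\cA$; the plan is to write down an explicit hypertrace for $\tau$ on $\cL(\ell^2(X))$ built from the restriction of $\tau$ to the diagonal.

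\emph{Step 1: $\tau$ is determined by its diagonal, and the diagonal mean is translation invariant.} Let $\mu:=\tau|_{\ell^\infty(X)}$ and let $E\colon\cA\to\ell^\infty(X)$ be the canonical u.c.p.\ conditional expectation onto the diagonal. I would first show $\tau=\mu\circ E$. By continuity and density it is enough to treat $T\in\C_\mathrm{u}(X)$ of propagation $\le R$, and, subtracting $E(T)$, such $T$ with zero diagonal; for these, bounded geometry yields a partition $X=X_1\sqcup\dots\sqcup X_m$ into finitely many sets whose distinct points lie at distance $>R$ (greedily colour the graph on $X$ with edges $\{x,y\}$, $0<d(x,y)\le R$), so that $P_{X_i}TP_{X_i}=0$ and, for $i\ne j$, $\tau(P_{X_i}TP_{X_j})=\tau(TP_{X_j}P_{X_i})=0$, whence $\tau(T)=\sum_{i,j}\tau(P_{X_i}TP_{X_j})=0$. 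The trace identity also gives that $\mu$ is invariant along partial translations: for a partial bijection $t$ of $X$ with $\sup_{x\in\mathrm{dom}\,t}d(x,t(x))<\infty$ and $A\subseteq\mathrm{dom}\,t$, the partial isometry $v_t\in\C_\mathrm{u}(X)$ has $(v_tP_A)^*(v_tP_A)=P_A$ and $(v_tP_A)(v_tP_A)^*=P_{t(A)}$, so $\mu(\1_A)=\mu(\1_{t(A)})$; approximating by simple functions (the push-forward $t_*$ being an isometry of $\ell^\infty(\mathrm{dom}\,t)$ onto its image in $\ell^\infty(X)$) gives $\mu(f)=\mu(t_*f)$ for every $f\in\ell^\infty(X)$ supported on $\mathrm{dom}\,t$.

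\emph{Step 2: the hypertrace.} Let $\widetilde E\colon\cL(\ell^2(X))\to\ell^\infty(X)$ be the u.c.p.\ compression to the diagonal, which extends $E$, and set $\psi:=\mu\circ\widetilde E$, a state on $\cL(\ell^2(X))$ with $\psi|_{C_\mathrm{u}^*(X)}=\mu\circ E$. To check $\psi(AT)=\psi(TA)$ for $T\in\cL(\ell^2(X))$ and $A\in C_\mathrm{u}^*(X)$, it suffices by density to take $A=v_t\diag(a)$ with $t$ a partial bijection of bounded displacement and $a\in\ell^\infty(X)$, since every element of $\C_\mathrm{u}(X)$ is a finite sum of such operators (the standard colouring decomposition of a bounded-propagation kernel into partial bijections). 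A short computation of diagonal matrix entries gives $\widetilde E(TA)=\diag(f)$ with $f(x)=a(x)\,T_{x,t(x)}$ on $\mathrm{dom}\,t$ (and $0$ elsewhere) and $\widetilde E(AT)=\diag(t_*f)$, so $\psi(AT)=\mu(t_*f)=\mu(f)=\psi(TA)$ by Step~1. For $\cA=C_\mathrm{u}^*(X)$ this finishes the proof: $\psi$ restricts to $\tau=\mu\circ E$ on $C_\mathrm{u}^*(X)$ and is $C_\mathrm{u}^*(X)$-central, so $\tau$ is amenable.

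\emph{Step 3: the maximal Roe algebra, and the main obstacle.} For $\cA=C_\mathrm{u,max}^*(X)$ one extra move is needed, because $\psi$ lives on $\cL(\ell^2(X))$, which sees only the canonical quotient $\pi\colon C_\mathrm{u,max}^*(X)\twoheadrightarrow C_\mathrm{u}^*(X)$. Set $\bar\tau:=\psi|_{C_\mathrm{u}^*(X)}$, a tracial state by centrality of $\psi$. Then $\tau$ and $\bar\tau\circ\pi$ agree on $\C_\mathrm{u}(X)$ (where $\pi$ is the identity and both equal $\mu\circ E$), hence everywhere, so $\tau=\bar\tau\circ\pi$ with $\bar\tau$ amenable (witnessed by $\psi$). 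Since amenability of a trace is inherited under pullback along a surjective $*$-homomorphism --- transparent from the description of amenable traces via asymptotically multiplicative u.c.p.\ maps into matrix algebras (compose those with $\pi$), or via injectivity of the GNS von Neumann algebra (unchanged on passing to $\pi(B)$) --- it follows that $\tau$ is amenable too. I expect this last step, namely establishing that every trace on $C_\mathrm{u,max}^*(X)$ descends to $C_\mathrm{u}^*(X)$ and lining up the equivalent descriptions of ``amenable trace'' to justify the pullback, to be the main technical obstacle; the rigidity $\tau=\mu\circ E$ and the partial-translation computation of Steps~1--2, although only bounded-geometry bookkeeping, carry the conceptual weight --- they are why traces on uniform Roe algebras cannot fail to be amenable.
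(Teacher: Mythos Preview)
Your proof is correct and follows the same strategy as the paper: show every trace factors as $\tau=\mu\circ E$ with $\mu$ a mean on $X$ invariant under partial translations (your metric-colouring of $X$ via the $R$-neighbourhood graph replaces the paper's $3$-colouring of the orbit graph of a single partial translation in Lemma~\ref{lem:factor-tracial-state}, but to the same effect), and then verify that $\psi=\mu\circ\widetilde E$ on $\cL(\ell^2(X))$ is a hypertrace for $\pi(\cA)$. Your Step~3 worry about the maximal Roe algebra is handled in the paper exactly as you outline --- via the extension of the standard representation to any such $\cA$ (Lemma~\ref{lem:extension}) together with the embedding-independence of amenability of traces on $C^*$-algebras --- and is not the obstacle you fear.
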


We conclude this section stating the problem if, in this context, every tracial state is quasidiagonal.
Finally, there is a natural strengthening of the notion of a F\o lner net which 
we call a {\em proper} F\o lner net. In the context of metric spaces and algebras, it corresponds to the 
F\o lner net being exhaustive, where its subtle difference from general F\o lner nets plays an important r\^{o}le in the study of amenability in these contexts (see \cite[Sections~2--4]{ALLW-1}). 
We introduce the new class of {\it properly F\o lner} $C^*$-algebras in Definition~\ref{def:properly-Foelner}, and  
prove analogs of Theorems~\ref{thmintro-1} and~\ref{thmintro-2} in this more restrictive context 
(see Theorems~\ref{theorem:alg-amen-to-Folner}~(ii) and~\ref{teo:Roe-proper-amenability}).

\textbf{Conventions:} Given sets $X_1$ and $X_2$, we denote their cardinalities by $|X_1|$ and $|X_2|$, and their 
disjoint union by $X_1\sqcup X_2$. We put $\mathbbm{N}_0=\{0,1,2,\dots\}=\N\sqcup\{0\}$.
We also denote by $\mathrm{tr}(\cdot)$ the unique tracial state on a matrix algebra
$M_{n}(\C)$ and by $\mathrm{Tr}(\cdot)$ the canonical semifinite trace on 
$\mathcal{L}{(\mathcal H)}$, the set of bounded linear operators on 
a complex Hilbert space $\cH$. All the representations of unital $*$-algebras are assumed to be unital.

%%%%%%%%%%%%%%%%%%%%%%%%%%%%%%%%%%%%%%%%%%%%%%%%%%%%%%%%%%%%%%%%%%%%%%%%%%%%%%%
\section{Basic definitions and results}\label{sec:amenability-metric-algebras}
%%%%%%%%%%%%%%%%%%%%%%%%%%%%%%%%%%%%%%%%%%%%%%%%%%%%%%%%%%%%%%%%%%%%%%%%%%%%%%%

In this section we will summarize the main definitions and results needed later.
First we recall the notion of amenability on metric spaces and $\C$-algebras. 
We refer to \cite{ALLW-1} (and references cited therein) for motivation, analysis or proofs in a more general context. 
In the last subsection we also include the definition and some properties of semi-pre-$C^*$-algebras
as introduced by Ozawa in \cite{Ozawa13}.

%%%%%%%%%%%%%%%%%%%%%%%%%%%%%%%%%%%%%%%%%%%%%%%%%%%%%%%%%%%%%%%%%%%%%%%%%%%%%%%
\subsection{Amenable metric spaces}\label{subsec:amenability-metric}
%%%%%%%%%%%%%%%%%%%%%%%%%%%%%%%%%%%%%%%%%%%%%%%%%%%%%%%%%%%%%%%%%%%%%%%%%%%%%%%

We begin by addressing the notion of amenability for a class of discrete metric spaces that
will be needed in the study of F\o lner type conditions for Roe $C^*$-algebras in Section~\ref{sec:alg-amenable2}.
See Section~2 in \cite{ALLW-1} for a detailed analysis of metric amenability.
We will consider {\em extended} metric spaces $(X,d)$, i.e., spaces where the metric is allowed to take the 
value $\infty$,
\[ 
d\colon X\times X \to [0, \infty] \;.
\]
The property that two points have a finite distance defines an equivalence relation, which decomposes $X$ uniquely into a disjoint union of
equivalence classes (which we call coarse connected components), i.e.,
$X= \bigsqcup_{i \in I} X_i$, such that each $(X_i, d|_{X_i \times X_i})$ is an ordinary metric space, while 
$d(X_i, X_j) = \infty$ for any different $i,j \in I$. 
In addition, we will restrict our attention to extended metric spaces $(X,d)$ that 
are \emph{uniformly discrete}, i.e., there exists a constant $C>0$ such that for any two distinct points $x,y\in X$ 
one has $d(x,y)\geq C$. This is no real restriction in the context of coarse geometry since every metric space is coarsely equivalent to a uniformly discrete one.
(see, e.g., \cite[Proposotion~1.3.8]{Nowak-Yu-12}). Following Definition~1.2.6 in \cite{Nowak-Yu-12}
we define the following class of metric spaces.

\begin{definition}\label{def:bd-geo}
A uniformly discrete metric space $(X,d)$ has \emph{bounded geometry}\footnote{Note that a metric space satisfying the condition stated in Eq.~(\ref{eq:bdd-geo}) is also called
	uniformly locally finite. Moreover, the name ``bounded geometry'' is used with slight variations
	in the literature (see, e.g., \cite[Section~5.5]{bBrown08}).} if 
for any radius $R>0$
\begin{equation}\label{eq:bdd-geo}
  \sup_{x\in X}|B_R(x)|<\infty \;,
\end{equation}
where $B_R(x):=\{y\in X:d(x,y)\leq R\}$ denotes the closed ball centered at $x$ with 
radius $R$.
\end{definition}

If $(X,d)$ is an extended metric space with bounded geometry, 
then each coarse connected component $X_i$ is discrete and countable, since a finite metric space 
is discrete and $X_i=\bigcup_{n\in \N} B_n(x_0)$ for any $x_0\in X_i$.

We recall next the definition of amenability for coarse metric spaces based on the notion of F\o lner sets. 
Let $(X,d)$ be an extended metric space and $A\subset X$. 
For any $R>0$ consider the following natural notions of boundaries of $A$:
\begin{itemize}
 \item {\em $R$-boundary:} $\partial_R A:=\{x\in X: d(x,A)\leq R\ \text{and}\ d(x,X\setminus A)\leq R \}$;
 \item {\em outer $R$-boundary:} $\partial^+_R A := \{x\in X\setminus A:d(x,A)\leq R\}$;
 \item {\em inner $R$-boundary:} $\partial^-_R A := \{x\in A:d(x,X\setminus A)\leq R\}$. 
\end{itemize}
Next we introduce the notion of amenability of metric spaces as in
\cite[Section~3]{Block-Weinberger-92}. The idea behind this notion of amenability 
can be traced back to Ahlfors' analysis of exhaustions of
an open surface in part~II of \cite{Ahlfors}.

\begin{definition}\label{def:metric-amenability}
Let $(X,d)$ be an extended metric space with bounded geometry. 
\begin{itemize}
 \item[(i)] Let $R>0$ and $\varepsilon\geq 0$. A finite non-empty set $F\subset X$ is called an $(R,\varepsilon)$-\emph{F\o lner set}
  if it satisfies
  \begin{equation*}%%\label{eq:metric-quotient}
  \frac{|\partial_R F|}{|F|}\leq \varepsilon\;.
 \end{equation*}
We denote by $\mathrm{\mbox{F\o l}}(R, \varepsilon)$ the collection of $(R,\varepsilon)$-F\o lner sets.
 \item[(ii)] The metric space $(X,d)$ is called \emph{amenable} if for every $R>0$ and $\varepsilon > 0$ there exists
  $F\in \mathrm{\mbox{F\o l}}(R, \varepsilon)$.
 \item[(iii)] The metric space $(X,d)$ is called \emph{properly amenable} if for every $R>0$, $\varepsilon>0$ 
  and finite subset $A\subset X$ there exists
  a $F\in \mathrm{\mbox{F\o l}}(R, \varepsilon)$ with $A\subset F$.
\end{itemize}
\end{definition}

For a finitely generated discrete group $\Gamma$ equipped with a word length metric $d$, both notions of amenability for $(\Gamma, d)$ are equivalent to F\o lner's condition for the group $\Gamma$ (see e.g.,
\cite[Proposition~3.1.7]{Nowak-Yu-12}). Moreover, as in the usual metric space situation, it is obvious that if $X$ is finite then it is properly amenable by taking $F=X$. It is known that amenability is invariant under coarse equivalence between metric spaces with bounded geometry (see, e.g., 
\cite[Proposition~3.C.29]{Cornulier-Harpe-14} or \cite[Corollary~2.2 and Theorem~3.1]{Block-Weinberger-92}). 
For a detailed analysis of the relation between amenability and proper amenability
we refer to \cite[Section~2]{ALLW-1}.

The following lemma shows that proper amenability can be characterized
in terms of the cardinalities of the F\o lner sets.

\begin{lemma} \cite[Lemma 2.6]{ALLW-1}
\label{lem:proper-cardinality}
 Let $(X,d)$ be an infinite extended metric space with bounded geometry. 
 Then $X$ is properly amenable if and only if for every $R>0$, $\varepsilon>0$ and  $N\in\mathbb{N}$
 there exists an $F\in\mathrm{\mbox{F\o l}}(R,\varepsilon) $ such that 
 $|F| \geq N$.
\end{lemma}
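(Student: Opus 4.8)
The plan is to prove both directions of the equivalence, the harder (and more interesting) one being that proper amenability implies the existence of arbitrarily large F\o lner sets.

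\medskip

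\noindent\textbf{Easy direction.} Suppose $X$ is properly amenable. Fix $R>0$, $\varepsilon>0$ and $N\in\mathbb N$. Since $X$ is infinite, it contains a finite subset $A$ with $|A|\geq N$ (here I use that each coarse connected component is countable, or simply that $X$ is infinite as a set). By the definition of proper amenability there is an $F\in\mathrm{\mbox{F\o l}}(R,\varepsilon)$ with $A\subseteq F$, hence $|F|\geq|A|\geq N$. This gives the stated condition.

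\medskip

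\noindent\textbf{Hard direction.} Now assume that for every $R>0$, $\varepsilon>0$ and $N\in\mathbb N$ there is an $F\in\mathrm{\mbox{F\o l}}(R,\varepsilon)$ with $|F|\geq N$; we must produce, given also a finite set $A\subseteq X$, a F\o lner set containing $A$. The natural idea is to take a large F\o lner set $F$ and enlarge it by $A$: set $F':=F\cup A$. Then $\partial_R F'\subseteq \partial_R F\cup B_R(A)$ where $B_R(A)=\{x: d(x,A)\leq R\}$, because any point $x$ witnessing the $R$-boundary of $F'$ is within $R$ of both $F'$ and its complement; if it is not already in $\partial_R F$ then either it or a nearby point must ``see'' $A$. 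More carefully, if $x\in\partial_R F'$ then $d(x,F')\leq R$ and $d(x,X\setminus F')\leq R$; since $X\setminus F'\subseteq X\setminus F$ we get $d(x,X\setminus F)\leq R$, and if also $d(x,F)\leq R$ then $x\in\partial_R F$, while otherwise $d(x,F')\leq R$ forces $d(x,A)\leq R$, i.e. $x\in B_R(A)$. Thus $|\partial_R F'|\leq|\partial_R F|+|B_R(A)|$. By bounded geometry, $|B_R(A)|\leq |A|\cdot\sup_{x\in X}|B_R(x)|=:C_{R,A}$ is a finite constant independent of $F$. Also $|F'|\geq|F|$.

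\medskip

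\noindent The remaining point is to choose the parameters so that $|\partial_R F'|/|F'|$ is small. Given the target $\varepsilon>0$, first pick an auxiliary $F\in\mathrm{\mbox{F\o l}}(R,\varepsilon/2)$ with $|F|\geq N_0$, where $N_0$ will be chosen large; then
\[
\frac{|\partial_R F'|}{|F'|}\leq\frac{|\partial_R F|+C_{R,A}}{|F|}\leq\frac{\varepsilon}{2}\cdot\frac{|F|}{|F|}+\frac{C_{R,A}}{|F|}=\frac{\varepsilon}{2}+\frac{C_{R,A}}{|F|},
\]
using $|\partial_R F|\leq(\varepsilon/2)|F|$ and $|F'|\geq|F|$. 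Choosing $N_0>2C_{R,A}/\varepsilon$ makes the second term at most $\varepsilon/2$, so $F'\in\mathrm{\mbox{F\o l}}(R,\varepsilon)$ and $A\subseteq F'$. Since $R,\varepsilon,A$ were arbitrary, $X$ is properly amenable. The main (mild) obstacle is just bookkeeping the boundary inclusion $\partial_R F'\subseteq\partial_R F\cup B_R(A)$ correctly and invoking bounded geometry to bound $|B_R(A)|$ uniformly; everything else is a routine estimate. One should also note the hypothesis that $X$ is infinite is only needed in the easy direction (to extract a large $A$); in the hard direction it is automatic since arbitrarily large F\o lner sets exist.
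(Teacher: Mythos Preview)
Your proof is correct. Note, however, that the paper does not actually give a proof of this lemma: it is stated with a citation to \cite[Lemma~2.6]{ALLW-1} and no argument is supplied here, so there is no ``paper's own proof'' to compare against directly.

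That said, your argument is the natural one and matches in spirit the proof the paper \emph{does} give for the Hilbert-space analogue, Lemma~\ref{lem:proper-Foelner-rank}: there one takes an $(\cF,\varepsilon/2)$-F\o lner projection $P_0$ of large Hilbert--Schmidt norm, forms $P=P_0\vee Q$, and controls the extra commutator contribution by $2\|T\|\,\|Q\|_2/\|P_0\|_2$. Your union $F'=F\cup A$ with the boundary estimate $\partial_R F'\subseteq \partial_R F\cup B_R(A)$ and the bounded-geometry bound on $|B_R(A)|$ plays exactly the same role. Both the boundary inclusion and the parameter bookkeeping are handled correctly.
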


Amenability for metric spaces comes, as in the case of groups, with an important dichotomy in relation to
paradoxical decompositions. For its formulation and for the construction of the 
translation algebra in Section~\ref{sec:alg-amenable2} we need to introduce the notion of a partial translation.

\begin{definition}\label{def:part-transl}
Let $(X,d)$ be an extended metric space. A {\em partial translation on} $X$ is a triple $(A,B,t)$, where $A,B\subset X$ 
and $t\colon A\rightarrow B$ is a bijection such that the graph of $t$ 
is controlled, i.e., 
\[
\sup_{x\in A}d(x,t(x))<\infty \;.
\]
We denote the corresponding
domain and range of $t$ by $\mathrm{dom}(t):=A$ and $\mathrm{ran}(t):=B$. Given two partial translations $t$ and $t'$, we may form their composition $t \circ t'$ by restricting the domain to $(t')^{-1}( \mathrm{dom}(t) \cap \mathrm{ran}(t') )$ and the range to $t( \mathrm{dom}(t) \cap \mathrm{ran}(t') )$. The set of all partial translations of $X$ is denoted as 
$\mathrm{PT}(X)$.

A mean $\mu$ on $(X,d)$ is a normalized, finitely additive map on the set of all subsets of $X$, $\mu\colon \mathcal{P}(X)\to [0,1]$.
The mean $\mu$ is called {\em invariant under partial translations} if $\mu(A)=\mu(B)$ for all partial translations $(A,B,t)$.
\end{definition}

Note that a partial translation can only map subsets of each coarse connected component $X_i$ onto subsets of $X_i$.
In this sense, each partial translations naturally decomposes along its coarse connected components.

\begin{definition}\label{def:paradoxical-decomposition}
Let $(X,d)$ be an extended metric space. A {\em paradoxical decomposition}\footnote{Actually, this definition is equivalent to a 
weak version of paradoxical decomposition of $(X,d)$ (see Remark 2.10 and Remark 2.12 in \cite{ALLW-1} for more details).} of $X$ is a (disjoint) partition 
$X=X_+\sqcup X_-$ such that there exist two partial translations $t_\pm\colon X\rightarrow X_\pm$.
\end{definition}

The following result gives some standard characterizations of amenable metric spaces that will be 
used later (see, e.g., \cite[Theorems~25 and 32]{Silberstein-Grigorchuk-Harpe-99}; see also 
Theorem~2.16 in \cite{ALLW-1} for a proof in extended metric space situation).

\begin{theorem}\label{theorem:amenable-metric}
Let $(X,d)$ be an extended metric space with bounded geometry. Then the following conditions are equivalent:
\begin{enumerate}
 \item \label{item:X-amen} $(X,d)$ is amenable.
 \item \label{item:X-no-parox} $X$ admits no paradoxical decomposition.
 \item \label{item:inv-meas} There exists a mean $\mu$ on $X$ which is invariant under partial translations.
\end{enumerate}
\end{theorem}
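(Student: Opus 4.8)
The plan is to establish the cycle of implications $(1)\Rightarrow(3)\Rightarrow(2)\Rightarrow(1)$, working component-wise so that the extended metric space case reduces to ordinary metric spaces.

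First I would prove $(1)\Rightarrow(3)$ by a standard compactness/ultrafilter argument. Fix an increasing sequence $R_n\to\infty$ and, using amenability, for each $n$ pick an $(R_n, 1/n)$-F\o lner set $F_n$. Define $\mu_n\colon \mathcal{P}(X)\to[0,1]$ by $\mu_n(A) = |A\cap F_n|/|F_n|$. Each $\mu_n$ is a finitely additive mean, so the sequence $(\mu_n)$ lies in the compact space $[0,1]^{\mathcal{P}(X)}$ (product topology), and I can take a limit point $\mu$ along a free ultrafilter $\omega$ on $\N$, i.e.\ $\mu(A) = \lim_\omega \mu_n(A)$. Finite additivity passes to the limit. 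The key point is invariance under a partial translation $(A,B,t)$: since the graph of $t$ is controlled, say $\sup_{x\in A} d(x,t(x))\le S$, one has $t(A\cap F_n)\subset B$ and the symmetric difference between $t(A\cap F_n)$ and $B\cap F_n$ is controlled by $\partial_{S} F_n$ — more precisely, any point of $B\cap F_n$ not in $t(A\cap F_n)$ has its $t$-preimage in $X\setminus F_n$ but within distance $S$ of $F_n$, hence lies in $\partial_S F_n$ once $n$ is large enough that $R_n\ge S$. Therefore $\big||A\cap F_n| - |B\cap F_n|\big|\le |\partial_{R_n} F_n|$, so dividing by $|F_n|$ and passing to the limit gives $\mu(A)=\mu(B)$.

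Next, $(3)\Rightarrow(2)$ is immediate: if $X=X_+\sqcup X_-$ were a paradoxical decomposition with partial translations $t_\pm\colon X\to X_\pm$, then an invariant mean $\mu$ would give $\mu(X_\pm) = \mu(X) = 1$, whence $1 = \mu(X) = \mu(X_+)+\mu(X_-) = 2$, a contradiction. The remaining implication $(2)\Rightarrow(1)$ is the substantive one, and here I would argue the contrapositive: assuming $(X,d)$ is \emph{not} amenable, construct a paradoxical decomposition. By bounded geometry each coarse connected component is discrete and countable, and non-amenability of $X$ means some component fails the F\o lner condition (or, handling the general case, one builds the decomposition on each non-amenable component and partitions the amenable ones arbitrarily — though some care is needed to glue consistently). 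The heart of the matter is a Hall-type marriage/matching argument: fix $R>0$ and $\varepsilon_0>0$ witnessing non-amenability, so $|\partial_R F| > \varepsilon_0 |F|$ for all finite $F$; a doubling iteration of this isoperimetric inequality, combined with the bounded-geometry bound on ball sizes, yields (via the defect version of Hall's theorem, or equivalently a flow/augmenting-path argument on the graph where $x\sim y$ iff $0<d(x,y)\le R'$ for a suitably enlarged $R'$) two partial translations $t_\pm$ realizing a $1$-to-$2$ ``expansion'' of $X$ into two disjoint copies. This is exactly the metric-space analog of Tarski's theorem, and I expect this step — producing the matchings with controlled displacement from the isoperimetric inequality — to be the main obstacle; it is essentially the content of \cite[Theorems~25 and 32]{Silberstein-Grigorchuk-Harpe-99} and Theorem~2.16 in \cite{ALLW-1}, and I would either invoke those or reproduce the Hall-theorem bookkeeping, taking care throughout that all constructions respect the decomposition into coarse connected components.
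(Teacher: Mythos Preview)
The paper does not actually supply a proof of this theorem: it states the result and refers the reader to \cite[Theorems~25 and 32]{Silberstein-Grigorchuk-Harpe-99} and to \cite[Theorem~2.16]{ALLW-1} for the extended metric space version. Your sketch follows precisely the standard route taken in those references --- F\o lner sets plus an ultrafilter limit for $(1)\Rightarrow(3)$, the obvious contradiction for $(3)\Rightarrow(2)$, and a Hall-matching/Tarski argument for the contrapositive of $(2)\Rightarrow(1)$ --- so substantively you are in agreement with the cited proofs.

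One point in your handling of components in $(2)\Rightarrow(1)$ is off and worth correcting. You write that non-amenability of $X$ means ``some component fails the F\o lner condition'' and speak of partitioning ``the amenable ones arbitrarily''. In fact, if any coarse connected component $X_i$ were amenable, then a F\o lner set for $X_i$ would already be a F\o lner set for $X$ (its $R$-boundary in $X$ equals its $R$-boundary in $X_i$, since the other components are at infinite distance). Hence non-amenability of $X$ forces \emph{every} component to be non-amenable, and --- more to the point --- with the \emph{same} witnessing pair $(R_0,\varepsilon_0)$. This uniformity is what makes the gluing issue you worry about disappear: the iterated isoperimetric inequality and the Hall matching on each component produce partial translations with a displacement bound depending only on $(R_0,\varepsilon_0)$ and the bounded-geometry constant $\sup_x |B_{R_0}(x)|$, so the union over components is still a partial translation of $X$. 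With that adjustment your outline is correct.
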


\begin{remark}\label{rem:A}
There is another non-equivariant analogue of amenability in the realm of coarse geometry, namely
property A introduced by Yu in \cite{Yu00}. 
This property guarantees the existence of a coarse embedding of the metric space into a Hilbert space
(see Chapters~4 and 5 of \cite{Nowak-Yu-12} and \cite{AGS12} for a precise definition and additional results).

We remark that these properties do not cover each other.
Any free group $\mathbb{F}_n$ with $n\geq 2$ generators is non-amenable (as a group and 
as a metric space) and has Property~A (cf., \cite[Example~4.1.5]{Nowak-Yu-12}).

To give an example of an amenable metric space without property~A we need to introduce first
the notion of box spaces (cf., \cite{Roe03,Kh12}).
Let $\Gamma$ be a finitely generated, residually finite group and let 
$\Gamma_1\supseteq \Gamma_2\supseteq\cdots \supseteq \Gamma_n\supseteq \cdots$ 
be a nested sequence of finite index normal subgroups of $\Gamma$ such that $\bigcap_{n=1}^\infty \Gamma_n=\{e\}$. 
The box space associated to the sequence $\{\Gamma_n\}_{n\in \N}$, denoted by $\text{Box}_{\Gamma_n}(\Gamma)$, is the disjoint union
$\bigsqcup_{n=1}^\infty \Gamma/\Gamma_n$ endowed with a metric $d$, such that: 
\begin{itemize}
\item[(i)] On each quotient $\Gamma/\Gamma_n$ the 
metric is the word metric with respect to the image of the finite generating set of $\Gamma$.
\item[(ii)] 
The distances between two finite quotients satisfy $d(\Gamma/\Gamma_n,\Gamma/\Gamma_m)\rightarrow \infty$ as $n+m\rightarrow \infty$ 
and $n\neq m$.
\end{itemize}

Any two metrics on $\text{Box}_{\Gamma_n}(\Gamma)$ satisfying the above two conditions are coarsely equivalent. Since each quotient 
group has a fixed number of generators, it is not hard to see that the box space has bounded geometry. Moreover, the box space is 
an amenable metric space. Indeed, 
for any $R>0$, there exists by condition (ii) above a natural number $n_R\in \N$ such that the $R$-boundary of $\Gamma / \Gamma_n$ 
is empty for all $n\geq n_R$.
It is well-known (see \cite[Proposition~11.39]{Roe03}) that $\Gamma$ is amenable if and only if $\text{Box}_{\Gamma_n}(\Gamma)$ has 
property A for any nested sequence $\Gamma_n$ of finite 
index normal subgroups of $\Gamma$ with trivial intersection.  In particular, 
the space $X:=\text{Box}_{\Gamma_n}(\mathbb{F}_2)$ is an amenable metric space 
(with bounded geometry) which does {\em not} have property A. 
\end{remark}

\begin{corollary}\label{cor:characamenable-nprpamen}
Let $(X,d)$ be a extended metric space with bounded geometry and $|X|=\infty$. 
Then $X$ is amenable but not properly amenable if and only if $X= Y_1\sqcup Y_2$, where $Y_1$ is a finite non-empty subset of $X$,
$Y_2$ is non-amenable and $d(x,y)= \infty $ for $x\in Y_1$ and $y\in Y_2$. 
\end{corollary}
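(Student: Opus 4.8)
\textbf{Proof proposal for Corollary~\ref{cor:characamenable-nprpamen}.}

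The plan is to prove both implications by combining the cardinality characterization of proper amenability (Lemma~\ref{lem:proper-cardinality}) with the observation that amenability is detected on coarse connected components, since the boundaries $\partial_R F$ only see points within finite distance of $F$, hence within the coarse components met by $F$.

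For the ``if'' direction, suppose $X = Y_1 \sqcup Y_2$ with $Y_1$ finite non-empty, $Y_2$ non-amenable, and $d(Y_1, Y_2) = \infty$. First I would check that $X$ is amenable: given $R > 0$ and $\varepsilon > 0$, take $F := Y_1$; since $d(Y_1, Y_2) = \infty$ we have $d(x, X \setminus Y_1) = \infty > R$ for every $x \in Y_1$, so $\partial_R Y_1 = \emptyset$ and $|\partial_R Y_1|/|Y_1| = 0 \leq \varepsilon$. Hence $Y_1 \in \mathrm{F\o l}(R,\varepsilon)$ and $X$ is amenable. Next I would show $X$ is \emph{not} properly amenable. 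Since $Y_2$ is non-amenable, there exist $R_0 > 0$ and $\varepsilon_0 > 0$ such that $Y_2$ has no $(R_0, \varepsilon_0)$-F\o lner set. I claim that for these parameters no F\o lner set in $X$ can be large: any $F \in \mathrm{F\o l}(R_0, \varepsilon_0)$ in $X$ splits as $F = (F \cap Y_1) \sqcup (F \cap Y_2)$, and because $d(Y_1, Y_2) = \infty$, the $R_0$-boundary (computed in $X$) also splits as $\partial_{R_0}^X F = \partial_{R_0}^{Y_1}(F\cap Y_1) \sqcup \partial_{R_0}^{Y_2}(F \cap Y_2)$ — here I should spell out that for $x \in Y_2$, $d(x, X\setminus F) = d(x, Y_2 \setminus (F\cap Y_2))$ and $d(x, F) = d(x, F\cap Y_2)$, since the other component is infinitely far. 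If $F \cap Y_2 \neq \emptyset$ then $\frac{|\partial_{R_0}^{Y_2}(F\cap Y_2)|}{|F\cap Y_2|} \leq \frac{|\partial_{R_0}^X F|}{|F\cap Y_2|} \leq \frac{|\partial_{R_0}^X F|}{|F|} \cdot \frac{|F|}{|F\cap Y_2|}$; but more directly, $|\partial_{R_0}^{Y_2}(F\cap Y_2)| \le |\partial_{R_0}^X F| \le \varepsilon_0 |F|$, and since $Y_1$ is finite with $|Y_1| = m$ say, and the non-existence of $(R_0,\varepsilon_0)$-F\o lner sets in $Y_2$ gives $|\partial_{R_0}^{Y_2}(F\cap Y_2)| > \varepsilon_0 |F \cap Y_2| \geq \varepsilon_0(|F| - m)$, we get $\varepsilon_0|F| \geq \varepsilon_0(|F|-m)$ only trivially — so I need to argue more carefully by instead shrinking $\varepsilon_0$: pick $\varepsilon_1 < \varepsilon_0$ with $Y_2$ still having no $(R_0,\varepsilon_1)$-F\o lner set (any smaller $\varepsilon$ works), and then for $F$ with $F \cap Y_2 \neq \emptyset$, $\varepsilon_0|F\cap Y_2| < |\partial_{R_0}^{Y_2}(F\cap Y_2)| \le \varepsilon_1|F|$ forces... this still needs $|F\cap Y_2|$ bounded away from $|F|$. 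The clean fix: if $F\cap Y_2\ne\emptyset$, then $|\partial^X_{R_0}F|\ge|\partial^{Y_2}_{R_0}(F\cap Y_2)|>\varepsilon_0|F\cap Y_2|$, so $F\notin\mathrm{F\o l}(R_0,\varepsilon_0)$ unless $|F\cap Y_2|=0$; hence every $F\in\mathrm{F\o l}(R_0,\varepsilon_0)$ satisfies $F\subseteq Y_1$, so $|F|\le|Y_1|<\infty$. By Lemma~\ref{lem:proper-cardinality}, $X$ is not properly amenable.

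For the ``only if'' direction, assume $X$ is amenable but not properly amenable, with $|X| = \infty$. By Lemma~\ref{lem:proper-cardinality} there exist $R_0 > 0$, $\varepsilon_0 > 0$ and a bound $N_0$ such that every $F \in \mathrm{F\o l}(R_0, \varepsilon_0)$ has $|F| \le N_0$. Since $X$ is amenable, $\mathrm{F\o l}(R_0,\varepsilon_0)$ is non-empty; among all its members choose $F_0$ of maximal cardinality (possible as cardinalities are bounded by $N_0$). The key claim is that $F_0$ is a union of coarse connected components, i.e., a ``coarse clopen'' set with $d(F_0, X\setminus F_0) = \infty$. Suppose not: then some coarse component $X_i$ meets both $F_0$ and $X \setminus F_0$. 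Within $X_i$ (an ordinary metric space with bounded geometry), I would argue that $X_i$ must itself be non-amenable — otherwise, using amenability of $X_i$ one can find within $X_i$ arbitrarily good F\o lner sets and, by a standard engulfing/union argument (replacing $F_0 \cap X_i$ by $(F_0\cap X_i) \cup G$ for a suitable large F\o lner set $G$ of $X_i$, using that $d(X_i, X\setminus X_i)=\infty$ so boundaries don't interact across components), produce an element of $\mathrm{F\o l}(R_0,\varepsilon_0)$ strictly larger than $F_0$ — contradiction. So the component $X_i$ containing points of $X\setminus F_0$ that is ``touched'' must be non-amenable; but this has to be reconciled with $F_0$ meeting it. Let me restructure: set $Y_2 := X \setminus F_0$ and $Y_1 := F_0$. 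Then $Y_1$ is finite and non-empty; $|X|=\infty$ forces $Y_2$ non-empty. I must show (a) $d(Y_1,Y_2)=\infty$ and (b) $Y_2$ is non-amenable. For (a): if some $x\in Y_1$ has $d(x, Y_2)=d(x,X\setminus F_0)<\infty$, one can enlarge $F_0$ by adjoining nearby points of $Y_2$ to reduce the relative boundary — more precisely, adjoin a large amenable ``patch'' from the component of $x$; the obstruction here is that the touched component might be non-amenable, in which case this enlargement fails, and then $F_0$ already has nonempty $R_0$-boundary coming from that component. This is the delicate point, so I would instead argue (a) and (b) together: decompose $X = \bigsqcup_j X_j$ into coarse components; $F_0$ meets only finitely many, say $X_{j_1},\dots,X_{j_p}$ (being finite); I claim $F_0$ contains \emph{all} of each $X_{j_\ell}$ that is amenable and that each $X_{j_\ell}$ not fully contained must be non-amenable, and finally that collapsing shows at most one relevant non-amenable component survives as $Y_2$.

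\textbf{The main obstacle} I anticipate is precisely this enlargement/maximality argument in the ``only if'' direction: showing that a maximal $(R_0,\varepsilon_0)$-F\o lner set is forced to be coarsely clopen, and that its complement is non-amenable as an extended metric space (equivalently, some coarse component of it is non-amenable). The crux is the interaction — or rather the non-interaction — of $R$-boundaries across coarse components (which is clean since $d(X_i,X_j)=\infty$) versus the possibility of enlarging $F_0$ within an amenable component without blowing up the boundary ratio past $\varepsilon_0$; this requires the observation that if $X_i$ is amenable then for every $\varepsilon>0$ and every finite $A\subseteq X_i$ one can find an $(R_0,\varepsilon)$-F\o lner set of $X_i$ containing $A$ \emph{provided $X_i$ is properly amenable}, and the dichotomy for a single ordinary metric space (amenable $\Leftrightarrow$ properly amenable unless it has a finite coarsely-isolated piece, cf.~\cite[Section~2]{ALLW-1}) — so I would lean on the component-wise version of this very corollary, or rather its single-component analogue, which should already be available from \cite{ALLW-1}. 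Once the coarse-clopen structure of $F_0$ is established, the remaining bookkeeping (merging the finitely many non-amenable ``leftover'' components into a single non-amenable $Y_2$, using that a disjoint union with all pairwise distances $\infty$ is amenable iff every component is) is routine via Theorem~\ref{theorem:amenable-metric}.
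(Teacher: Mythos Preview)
The paper does not supply a proof of this corollary; it is quoted from \cite[Section~2]{ALLW-1}, so there is no in-paper argument to compare against. I can therefore only assess correctness.

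Your ``if'' direction contains a genuine error. The ``clean fix'' asserts that $|\partial^X_{R_0}F|>\varepsilon_0|F\cap Y_2|$ forces $F\notin\mathrm{F\o l}(R_0,\varepsilon_0)$ unless $F\cap Y_2=\emptyset$; this is false, because $F\in\mathrm{F\o l}(R_0,\varepsilon_0)$ only requires $|\partial^X_{R_0}F|\le\varepsilon_0|F|$, and $\varepsilon_0|F\cap Y_2|\le\varepsilon_0|F|$ yields no contradiction. Ironically, the idea you abandoned two lines earlier works: test with parameter $\varepsilon_0/2$. If $F\in\mathrm{F\o l}(R_0,\varepsilon_0/2)$ and $F\cap Y_2\ne\emptyset$, then $\varepsilon_0|F\cap Y_2|<|\partial^X_{R_0}F|\le(\varepsilon_0/2)|F|$, so $|F\cap Y_2|<|F|/2$, hence $|F|<2|F\cap Y_1|\le 2|Y_1|$; together with the case $F\cap Y_2=\emptyset$ this bounds every $(R_0,\varepsilon_0/2)$-F\o lner set by $2|Y_1|$, and Lemma~\ref{lem:proper-cardinality} gives the conclusion.

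For the ``only if'' direction, you correctly flag the obstacle, but the maximal-F\o lner-set strategy as written does not obviously succeed: nothing forces a cardinality-maximal $(R_0,\varepsilon_0)$-F\o lner set to be coarsely clopen, and appealing to ``the single-component analogue from \cite{ALLW-1}'' is close to assuming what you want. A route that does work: with $(R_0,\varepsilon_0,N_0)$ from Lemma~\ref{lem:proper-cardinality}, set $\varepsilon_1=\min(\varepsilon_0,\tfrac{1}{N_0+1})$; for every $R\ge R_0$ amenability produces an $(R,\varepsilon_1)$-F\o lner set $F_R$ with $|F_R|\le N_0$, hence $|\partial_R F_R|<1$, i.e.\ $\partial_R F_R=\emptyset$. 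Thus $B_R(x)\subseteq F_R$ for each $x\in F_R$, so every point of $F_R$ lies in a coarse component of size $\le N_0$ once $R$ is large enough relative to that point. One then shows there is at least one but only finitely many finite coarse components (infinitely many would give arbitrarily large $(R_0,0)$-F\o lner sets), takes $Y_1$ to be their union, and checks that $Y_2=X\setminus Y_1$ is non-amenable (else $Y_2$, having no finite components, would be properly amenable, and so would $X$). The step ``at least one finite component exists'' still needs a careful pigeonhole argument to pass from $|B_R(x)|\le N_0$ for $x\in F_R$ (with $x$ depending on $R$) to the existence of a fixed small component; this is the genuine content and is handled in \cite{ALLW-1}.
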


%%%%%%%%%%%%%%%%%%%%%%%%%%%%%%%%%%%%%%%%%%%%%%%%%%%%%%%%%%%%%%%%%%%%%%%%%%%%%%%
\subsection{Algebraic amenability}\label{subsec:alg-amenable3}
%%%%%%%%%%%%%%%%%%%%%%%%%%%%%%%%%%%%%%%%%%%%%%%%%%%%%%%%%%%%%%%%%%%%%%%%%%%%%%%

In this subsection we will shortly review the main definitions and results concerning algebraic
amenability. For simplicity and coherence with following sections we will restrict 
our analysis of algebraic amenablity to algebras over the complex numbers. We will 
say $\cA$ is an algebra, meaning it is a $\C$-algebra.
Any result stated in this subsection for $\cA$ will be also true for $\K$-algebras over commutative fields $\K$. We refer to \cite[Sections~3 and 4]{ALLW-1} for 
proofs in the more general context of $\K$-algebras and additional motivation.
Our definition will follow existing notions in the literature
(see Section~1.11 in \cite{Gromov99} and \cite{Elek03,Cec-Sam-08}).

\begin{definition}\label{def:alg-amenable2}
 Let $\cA$ be an algebra. 
 \begin{itemize}
  \item[(i)] Let $\mathcal{F}\subset\cA$ be a finite subset and $\varepsilon \geq 0$. Then a nonzero finite-dimensional
   linear subspace $W \subset \cA$ is called a (left) \emph{$(\mathcal{F}, \varepsilon)$-F\o lner subspace} 
   if it satisfies
  \begin{equation}\label{eq:alg-amen2}
   \frac{\dim(a W +W)}{\dim(W)}\leq 1+\varepsilon\;,  
    \quad \text{for all } a\in\mathcal{F}\;.
  \end{equation}
  The collection of $(\mathcal{F}, \varepsilon)$-F\o lner subspaces of $\cA$ is denoted by $\mathrm{\mbox{F\o l}}(\cA, \mathcal{F}, \varepsilon)$.
  \item[(ii)] $\cA$ is (left) \emph{algebraically amenable} if for 
  any $\varepsilon >0$ and any finite set $\mathcal{F}\subset\cA$, there exists a left $(\mathcal{F}, \varepsilon)$-F\o lner subspace.
  \item[(iii)] $\cA$ is \emph{properly} (left) algebraically amenable if for any $\varepsilon >0$ and any finite set $\mathcal{F}\subset\cA$, 
  there exists a left $(\mathcal{F}, \varepsilon)$-F\o lner subspace $W$ such that $\mathcal{F} \subset W$. 
 \end{itemize}
\end{definition}

For brevity we are going to drop the term ``left'' for the rest of this section.
Any algebra satisfying $\dim(\cA)<\infty$ is obviously properly algebraically amenable by taking $W=\cA$.
Notice that although the definition works for algebras of arbitrary dimensions, the property of algebraic 
amenability is in essence a property for countably dimensional algebras (cf., Proposition~3.4 in \cite{ALLW-1}).

\begin{remark}
 The notion given by Elek in Definition~1.1 of \cite{Elek03} in fact corresponds to \emph{proper} algebraic amenability
 in the context of countably dimensional algebras as has been analyzed in \cite[Section~3]{ALLW-1}.
 \end{remark}

As proved in Proposition~3.4 of \cite{ALLW-1} we remark that, in essence, (proper) algebraic amenability
is a property of countably dimensional algebras:

\begin{proposition}\label{pro:alg-amenability-countability}
An algebra $\cA$ is (properly) algebraically amenable if and only if any countable subset in $\cA$ 
is contained in a countably dimensional subalgebra that is (properly) algebraically amenable.
\end{proposition}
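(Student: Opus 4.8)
The plan is to prove Proposition~\ref{pro:alg-amenability-countability} by two implications, the nontrivial direction being that (proper) algebraic amenability of $\cA$ passes to a well-chosen countably dimensional subalgebra containing a given countable set, and conversely that this ``local'' property suffices to deduce (proper) algebraic amenability of all of $\cA$. The backward implication is easy: if $\mathcal{F}\subset\cA$ is finite and $\varepsilon>0$, take the countable subset $S=\mathcal{F}$, find a countably dimensional subalgebra $\cB\supseteq\mathcal{F}$ that is (properly) algebraically amenable, and apply its (proper) algebraic amenability to $\mathcal{F}\subset\cB$; the resulting $(\mathcal{F},\varepsilon)$-F\o lner subspace $W\subset\cB\subset\cA$ (with $\mathcal{F}\subset W$ in the proper case) witnesses the condition in $\cA$, since $\dim(aW+W)$ is computed the same way in $\cB$ and in $\cA$.

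For the forward implication, I would build the subalgebra by a countable exhaustion/closure argument. Given a countable set $S\subset\cA$, enumerate a countable set $\mathcal{G}_0=\{g_1,g_2,\dots\}$ spanning the subalgebra generated by $S$; this subalgebra is already countably dimensional. The point is to enlarge it so that for every finite subset of it and every $\varepsilon=1/m$ an appropriate F\o lner subspace already lies inside. Do this inductively: having a countably dimensional subalgebra $\cB_k$, for each finite subset $\mathcal{F}$ of a fixed countable spanning set of $\cB_k$ and each $m\in\N$, use algebraic amenability of $\cA$ to pick a finite-dimensional $(\mathcal{F},1/m)$-F\o lner subspace $W_{\mathcal{F},m}\subset\cA$ (and in the proper case arrange $\mathcal{F}\subset W_{\mathcal{F},m}$, which is possible by proper algebraic amenability of $\cA$); since there are only countably many pairs $(\mathcal{F},m)$, the algebra $\cB_{k+1}$ generated by $\cB_k$ together with all these (finite-dimensional, hence countably many basis vectors in total) subspaces is again countably dimensional. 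Then set $\cB=\bigcup_k\cB_k$, which is a countably dimensional subalgebra containing $S$. To check it is (properly) algebraically amenable: any finite $\mathcal{F}\subset\cB$ lies in the span of finitely many basis vectors of some $\cB_k$, hence — after replacing $\mathcal{F}$ by that finite spanning set, which only makes the F\o lner condition stronger — the subspace $W_{\mathcal{F},m}$ constructed at stage $k+1$ lies in $\cB_{k+1}\subseteq\cB$ and is an $(\mathcal{F},1/m)$-F\o lner subspace of $\cB$ (again because $aW+W$ has the same dimension whether viewed in $\cB$ or $\cA$), and in the proper case it contains $\mathcal{F}$. Letting $m\to\infty$ gives (proper) algebraic amenability of $\cB$.

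The main technical obstacle, and the place to be careful, is the bookkeeping: one must ensure that at each stage the ``all finite subsets of a countable spanning set'' quantifier does not secretly require uncountably much data, and that choosing $\mathcal{F}$ to be a finite \emph{spanning set} (rather than an arbitrary finite subset) of the relevant finite-dimensional piece is legitimate — this works because an $(\mathcal{F}',\varepsilon)$-F\o lner subspace for a spanning set $\mathcal{F}'$ of $\mathrm{span}(\mathcal{F})$ is automatically an $(\mathcal{F},\varepsilon')$-F\o lner subspace for a comparable $\varepsilon'$ (a standard linearity estimate: if $\dim(aW+W)\le(1+\varepsilon)\dim W$ for all $a$ in a spanning set of size $r$, then $\dim(bW+W)\le(1+r\varepsilon)\dim W$ for any $b$ in their span, so it suffices to shrink $\varepsilon$). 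The other routine point is the invariance of $\dim(aW+W)$ under enlarging the ambient algebra, which is immediate since this dimension only depends on the multiplication map restricted to $\{a\}\times W$. Since all of this is carried out in detail in \cite[Section~3]{ALLW-1}, I would present the argument compactly and refer there for the full verification.
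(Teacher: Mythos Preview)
Your proposal is correct and matches what the paper does: the paper itself gives no proof of this proposition but simply cites \cite[Proposition~3.4]{ALLW-1}, and your sketch is exactly the standard countable-closure argument one expects there (your linearity estimate $\dim(bW+W)\le(1+r\varepsilon)\dim W$ for $b\in\mathrm{span}\{a_1,\dots,a_r\}$ is correct and is the only nontrivial ingredient). Since you also defer the details to \cite[Section~3]{ALLW-1}, your write-up is entirely in line with the paper's treatment.
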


The following example exhibits the difference between algebraic amenability and proper algebraic amenability.
(See also Theorem~3.2 in \cite{LledoYakubovich13} for an operator theoretic counterpart).

\begin{example} (\cite[Section~3]{ALLW-1})
Let $\cA$ be an algebra with a nonzero left ideal $I$ of finite dimension. Then $\cA$ is always algebraically
amenable, since $I$ is an $(\cA, \varepsilon=0)$-F\o lner subspace. Therefore an easy way to construct an amenable algebra that is not properly amenable is to take a direct sum of a finite dimensional algebra and a non-algebraically-amenable algebra (e.g., the group algebra of a non-amenable group; see Example~\ref{ex:group-algebra}). 
In particular, if $\cA$ is a non-amenable unital algebra, then $\widetilde{\cA} \cong \cA \oplus \mathbb{C}$ is algebraically amenable but not properly algebraically amenable.
Moreover, this is the only way in which a unitization $\widetilde{\cA}$ can be algebraically amenable but not properly algebraically amenable.
\end{example}

\begin{example} {\rm (\cite[Corollary 4.5]{Bart})}\label{ex:group-algebra}
The group algebra $\mathbb{C}G$ is algebraically amenable if and only if it is properly algebraically amenable if and only if 
$G$ is amenable.
\end{example}

%%%%%%%%%%%%%%%%%%%%%%%%%%%%%%%%%%%%%%%%%%%%%%%%%%%%%%%%%%%%%%
\subsection{Semi-pre-$C^*$-algebras and their representations} \label{subsec:semi-prec}
%%%%%%%%%%%%%%%%%%%%%%%%%%%%%%%%%%%%%%%%%%%%%%%%%%%%%%%%%%%%%%

To make this article as self-contained as possible we recall 
some definitions and basic facts about semi-pre-$C^*$-algebras and their representations (see Ozawa's recent paper 
\cite{Ozawa13} for more details and additional applications). 
This class of $*$-algebras include pre-$C^*$-algebras and allow to distinguish at the algebraic level between 
different analytic aspects of group $C^*$-algebras (see, e.g., Example~\ref{ex:GA}). 
In the following we only consider unital $*$-algebras 
over $\C$ and all homomorphisms and representations are assumed to be unital.

Given a unital $*$-algebra $\cA$, we denote by $\cA_h$ the set of all the hermitian (or self-adjoint) elements of $\cA$, i.e. $\cA_h = \{ a \in \cA \colon a = a^*\}$. 
It is clear that every element $X\in \cA$ can be uniquely written as a sum $X=A+iB$ of two hermitian elements $A$ and $B$. 
Moreover, the set of hermitian elements is an $\R$-vector space. We call a subset $\cA_+\subseteq \cA_h$ a 
\emph{$*$-positive cone}\footnote{This notion has been called \emph{quadratic module} in \cite{Sc09} and \emph{m-admissible wedges} 
in \cite{Sc90}.} if it satisfies the following conditions:
\begin{itemize}
\item[(i)] $\R_{\geq 0}\1\subseteq \cA_+$ and $\lambda A+B\in \cA_+$ for every $A,B\in \cA_+$ and $\lambda\in \R_{\geq 0}\;$.
\item[(ii)] $X^*AX\in \cA_+$ for every $A\in \cA_+$ and $X\in \cA$.
\end{itemize}
The \emph{algebraic $*$-positive cone}
\begin{align*}
\left\{\sum_{i=1}^n X_i^*X_i : X_1,\ldots, X_n\in \cA, n\in \N\right\}
\end{align*}
is obviously the smallest $*$-positive cone of $\cA$ and $\cA_h$ is the largest $*$-positive cone of $\cA$.
Given a $*$-positive cone $\cA_+$ of $\cA$, we write $A\leq B$ if $B-A\in \cA_+$ for $A,B\in \cA_h$. 
Then following \cite{Ozawa13}, we define the $*$-subalgebra of bounded elements by
\begin{align*}
\cA^\text{bdd}:=\text{\{$X\in \cA:\exists R>0$ such that $X^*X\leq R\1$\}}.
\end{align*}

\begin{definition}\label{def:semi-pre}
A unital $*$-algebra $\cA$ is called a semi-pre-$C^*$-algebra if it is equipped with a distinguished $*$-positive cone $\cA_+$ 
satisfying the Combes axiom that $\cA=\cA^\text{bdd}$.
\end{definition}
For a semi-pre-$C^*$-algebra $\cA$, one has $\cA_h=\cA_+-\cA_+$. Indeed, $H=\frac{\1+H^2}{2}-(\frac{\1+H^2}{2}-H)$ 
for every $H\in \cA_h$. 
We define the \emph{ideal of infinitesimal elements} of a semi-pre-$C^*$-algebra $\cA$ by
\begin{align*}
I(\cA)=\{X\in \cA:X^*X\leq \varepsilon \1\ \text{for all $\varepsilon>0$}\}
\end{align*}
and the \emph{archimedean closure} of any $*$-positive cone $\cA_+$ of $\cA$ by
\begin{align*}
\text{arch}(\cA_+)=\{A\in \cA_h:A+\varepsilon \1\in \cA_+\ \text{for all $\varepsilon>0$}\}.
\end{align*}
It is clear from the definition and the Combes axiom that the archimedean closure $\text{arch}(\cA_+)$ is again a 
$*$-positive cone which contains $\cA_+$. Moreover, $\text{arch}(\cA_+) \cap (-\text{arch}(\cA_+))\subseteq I(\cA)$. The cone $\cA_+$ is said to be \emph{archimedean closed} if $\cA_+=\text{arch}(\cA_+)$.

In the following, we will only consider positive $*$-representations of semi-pre-$C^*$-algebras, i.e., $*$-representations $\pi\colon \cA\to \cL(\cH)$ such~that $\pi(\cA_+)\subset \cL(\cH)_+$. Contrary to the $C^*$-algebraic setting this is not automatic for semi-pre-$C^*$-algebras. A positive
$*$-representation $\pi\colon \cA\to \cL(\cH)$ is faithful if for any positive $A\in\cA_+$
such that $\pi(A)=0$, it follows that $A=0$.

An important link to $C^*$-algebra theory is the notion of the universal $C^*$-algebra.

\begin{definition}\label{def:universal-C*}
The universal $C^*$-algebra of a semi-pre-$C^*$-algebra $\cA$ is the unital $C^*$-algebra $C^*_\mathrm{univ}(\cA)$ together with a positive 
$*$-homomorphism $\iota\colon \cA\rightarrow C^*_\mathrm{univ}(\cA)$ which satisfies the following properties: $\iota(\cA)$ is dense 
in $C^*_\mathrm{univ}(\cA)$ and every positive $*$-representation $\pi$ of $\cA$ on a Hilbert space $\cH$ extends to a 
$*$-representation $\widehat{\pi}\colon C^*_\mathrm{univ}(\cA)\rightarrow \cL(\cH)$, i.e., $\pi=\widehat{\pi}\circ \iota$.
\end{definition}

It is not hard to see that such a universal $C^*$-algebra is indeed unique and $C^*_\mathrm{univ}(\cA)$ is the separation and completion of the semi-pre-$C^*$-algebra $\cA$ under the $C^*$-semi-norm
\begin{align*}
\sup\Big\{\|\pi(A)\|_{\cL(\cH)}:\text{$\pi$ is a positive $*$-representation on a Hilbert space $\cH$}\Big\}.
\end{align*}
Hence every positive $*$-homomorphism between semi-pre-$C^*$-algebras extends uniquely to a (positive) $*$-homomorphism 
between their universal $C^*$-algebras. It may happen that $\cA_+=\cA_h$ and $C^*_\mathrm{univ}(\cA)=\{0\}$, 
which is still considered as a unital $C^*$-algebra.
\begin{theorem}[{\cite[Proposition~15]{Sc09}} and {\cite[Theorem~1]{Ozawa13}}]\label{semi}
Let $\cA$ be a semi-pre-$C^*$-algebra and $\iota\colon\cA\rightarrow C^*_\mathrm{univ}(\cA)$ be the positive $*$-homomorphism into 
the universal $C^*$-algebra of $\cA$. Then the following results hold:
\begin{itemize}
\item[(1)] The ideal of the infinitesimal elements  $I(\cA)$ is equal to $\ker \iota\;$.
\item[(2)] The archimedean closure of the positive cone $\emph{arch}(\cA_+)$ is equal to $\cA_h\cap \iota^{-1}(C^*_\mathrm{univ}(\cA)_+)\;$.
\item[(3)] The $C^*$-norm  $\|\iota(X)\|_{C^*_\mathrm{univ}(\cA)}$ is given by $\inf \{R>0 : R^2\1-X^*X\in \cA_+\}\;$.
\end{itemize}
\end{theorem}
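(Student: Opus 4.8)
The plan is to prove (3) first — it carries the analytic weight of the statement — and then deduce (1) and (2) from it together with the description of $C^*_\mathrm{univ}(\cA)$ recorded above. Recall from that discussion that $\|\iota(X)\|_{C^*_\mathrm{univ}(\cA)}$ coincides with the $C^*$-seminorm $p(X):=\sup\{\|\pi(X)\|_{\cL(\cH)}:\pi\text{ a positive }*\text{-representation}\}$, so in particular $\ker\iota=\{X\in\cA:p(X)=0\}$. Writing $q(X):=\inf\{R>0:R^2\1-X^*X\in\cA_+\}$, which is finite by the Combes axiom $\cA=\cA^\text{bdd}$, part (3) is exactly the equality $p(X)=q(X)$.

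For (3) I would argue in four steps. First, I would check that $\1$ is an order unit for the ordered real vector space $(\cA_h,\cA_+)$: from the identity $2R(R\1-A)=(R\1-A)^2+(R^2\1-A^2)$, valid for $A\in\cA_h$, combined with $A^2=A^*A\leq R_0\1$ coming from $\cA=\cA^\text{bdd}$, one obtains $-\sqrt{R_0}\,\1\leq A\leq\sqrt{R_0}\,\1$; the same identity also shows that $\1$ is an \emph{interior} point of $\cA_+$ for the associated order-unit seminorm $\|A\|_u:=\inf\{t>0:-t\1\leq A\leq t\1\}$. Second, the inequality $p(X)\leq q(X)$ is immediate, since $R^2\1-X^*X\in\cA_+$ yields $\pi(X)^*\pi(X)\leq R^2$, hence $\|\pi(X)\|\leq R$, for every positive $*$-representation $\pi$. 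Third, I would show $q(X)=\sup\{\sqrt{\phi(X^*X)}:\phi\text{ a state on }(\cA,\cA_+)\}$, where a state is a unital linear functional with $\phi(\cA_+)\subseteq\R_{\geq 0}$; this rests on the fact that $\text{arch}(\cA_+)=\{A\in\cA_h:\phi(A)\geq 0\text{ for every state }\phi\}$, proved by a Hahn--Banach separation argument in $(\cA_h,\|\cdot\|_u)$ using that $\1$ is an interior point of $\cA_+$ (passing to the quotient by $\ker\|\cdot\|_u=\text{arch}(\cA_+)\cap(-\text{arch}(\cA_+))$ if this seminorm fails to be a norm), together with the elementary observation that $R^2\1-X^*X\in\text{arch}(\cA_+)$ forces $(R^2+\delta)\1-X^*X\in\cA_+$ for all $\delta>0$, so that $q$ is unchanged if $\cA_+$ is replaced by $\text{arch}(\cA_+)$. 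Fourth, the GNS construction attaches to each state $\phi$ a positive $*$-representation $\pi_\phi$ with a cyclic unit vector $\xi_\phi$ and $\phi=\langle\pi_\phi(\cdot)\xi_\phi,\xi_\phi\rangle$; here the Combes axiom is exactly what makes $\pi_\phi$ act by \emph{bounded} operators, because $X^*X\leq R\1$ gives $b^*X^*Xb\leq R\,b^*b$ and hence $\|\pi_\phi(X)[b]\|^2=\phi(b^*X^*Xb)\leq R\|[b]\|^2$. Then $\phi(X^*X)=\|\pi_\phi(X)\xi_\phi\|^2\leq\|\pi_\phi(X)\|^2\leq p(X)^2$, while conversely any positive $*$-representation paired with a unit vector produces a state realizing $\|\pi(X)\xi\|^2$; taking suprema gives $q(X)=\sup_\phi\sqrt{\phi(X^*X)}=p(X)$, which is (3).

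Part (1) follows at once: $X\in I(\cA)$ means $\varepsilon\1-X^*X\in\cA_+$ for all $\varepsilon>0$, i.e. $q(X)=0$, i.e. $p(X)=0$, i.e. $\iota(X)=0$. For part (2), the inclusion $\text{arch}(\cA_+)\subseteq\cA_h\cap\iota^{-1}(C^*_\mathrm{univ}(\cA)_+)$ is easy, since $A\in\text{arch}(\cA_+)$ gives $\iota(A)+\varepsilon\1=\iota(A+\varepsilon\1)\in C^*_\mathrm{univ}(\cA)_+$ for all $\varepsilon>0$ (as $A+\varepsilon\1\in\cA_+$ and $\iota$ is positive) and the positive cone of a $C^*$-algebra is norm-closed. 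For the reverse inclusion, take $A=A^*$ with $\iota(A)\geq 0$ and set $M:=\max\{1,\|\iota(A)\|\}$; then $0\leq\iota(M\1-A)\leq M\1$, so $p(M\1-A)\leq M$, whence by (3), for every $\delta>0$ one has $(M+\delta)^2\1-(M\1-A)^2\in\cA_+$. Expanding and adding $A^2=A^*A\in\cA_+$ yields $A+\bigl(\delta+\tfrac{\delta^2}{2M}\bigr)\1\in\cA_+$; letting $\delta\downarrow 0$ shows $A+\varepsilon\1\in\cA_+$ for every $\varepsilon>0$, i.e. $A\in\text{arch}(\cA_+)$.

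The main obstacle is the third and fourth steps of (3). Recovering $\text{arch}(\cA_+)$ from the state functionals via Hahn--Banach requires setting up the order-unit structure with some care — verifying that $\1$ is an interior point of $\cA_+$, and handling the case where $\|\cdot\|_u$ is only a seminorm — while the GNS step crucially depends on the Combes axiom to force boundedness of the representations. These two points carry all the genuine content; the rest, including the derivations of (1) and (2), is formal bookkeeping.
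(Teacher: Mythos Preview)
The paper does not supply its own proof of this theorem; it is quoted from \cite[Proposition~15]{Sc09} and \cite[Theorem~1]{Ozawa13} and used as a black box thereafter. So there is nothing in the paper to compare your argument against.

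That said, your proposal is correct and is essentially the standard route taken in those references: establish that $\1$ is an order unit (via the identity $2R(R\1-A)=(R\1-A)^2+(R^2\1-A^2)$ and the Combes axiom), use Hahn--Banach/Eidelheit separation in $(\cA_h,\|\cdot\|_u)$ to identify $\mathrm{arch}(\cA_+)$ with the intersection of the positive half-spaces of states, run GNS (with the Combes axiom guaranteeing boundedness) to match states with positive $*$-representations, and conclude $p=q$. Your derivations of (1) and (2) from (3) are clean; the algebraic manipulation $(M+\delta)^2\1-(M\1-A)^2\in\cA_+$ plus $A^2\in\cA_+$ giving $A+\bigl(\delta+\tfrac{\delta^2}{2M}\bigr)\1\in\cA_+$ is exactly the kind of trick Ozawa uses. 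The only places requiring care, as you correctly identify, are the separation step (handling the possibility that $\|\cdot\|_u$ is merely a seminorm) and checking that the GNS representation lands in bounded operators; both are handled adequately in your sketch.
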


The universal $C^*$-algebra $C^*_\mathrm{univ}(\cA)$ can also be obtained as the closure of the image under the universal positive $*$-representation,  
which comes from the GNS construction. Recall that a linear functional on a semi-pre-$C^*$-algebra $\phi\colon\cA\rightarrow \C$ 
is called a \emph{state} if $\phi$ is positive, self-adjoint and unital. We denote that set of states on $\cA$ by $S(\cA)$.
For any $\phi\in S(\cA)$ we write the corresponding positive cyclic GNS $*$-representation as
\[
 \pi_\phi\colon\cA\rightarrow \cL(L^2(\cA,\phi))\;.
\] 
Then we have that the universal $C^*$-algebra $C^*_\mathrm{univ}(\cA)$ of $\cA$ is the closure of the image 
under the \emph{universal positive $*$-representation}
 \begin{align*}
 \pi_\mathrm{univ} := \bigoplus_{\phi\in S(\cA)} \pi_{\phi}\colon \cA \rightarrow \cL \left(\bigoplus_{\phi\in S(\cA)} L^2(\cA,\phi)\right)\;,
 \end{align*}
(See, e.g., \cite[Section~6]{Ozawa13} for details).

Finally, we introduce the notion of maximal representation of a semi-pre-$C^*$-algebra.
\begin{definition}\label{def:maximal}
A positive unital $*$-representation $\pi_m\colon \cA \rightarrow \cL(\cH_m)$ is \emph{maximal} if for any
unital positive $*$-representation $\pi\colon \cA \rightarrow \cL(\cH)$ one has
\[
 \|\pi(A)\|\leq \|\pi_m(A)\|\;,\quad \text{for all } A\in\cA\;.
\]
\end{definition}

Given two unital positive $*$-representations on a semi-pre-$C^*$-algebra $\pi\colon \cA \rightarrow \cL(\cH_\pi)$ and 
$\rho\colon \cA \rightarrow \cL(\cH_\rho)$, we say that $\pi$ weakly contains $\rho$ if $\|\rho(A)\|\leq \|\pi(A)\|$ 
for every $A\in \cA$. In particular, since every $*$-homomorphism on a $C^*$-algebra 
$\cB$ is automatically contractive, we see that $\pi\colon\cB\to \cL(\cH_\pi)$ 
weakly contains $\rho\colon\cB\to \cL(\cH_\rho)$ if and only if $\ker \pi\subseteq \ker \rho$ provided that $\cB$ is a $C^*$-algebra
(see (3.4.5) in \cite[Chapter~3, \S~4]{Dixmier77}). It is clear that the universal positive $*$-representation $\pi_\mathrm{univ}$ 
weakly contains all positive $*$-representations, hence it is maximal. Moreover, any positive $*$-representation 
weakly containing the universal positive $*$-representation is also maximal. 

Next we show the relation between maximal representations of semi-pre-$C^*$-algebras and faithful representations 
of the corresponding universal $C^*$-algebras.

\begin{lemma}\label{lem:max-faithful}
Let $\cA$ be a semi-pre-$C^*$-algebra and denote by $C^*_\mathrm{univ}(\cA)$ the corresponding universal $C^*$-algebra 
which we assume to be nonzero. Let $\pi_m\colon \cA \rightarrow \cL(\cH_m)$ be a positive $*$-representation and let $\widehat{\pi_m} \colon C^*_\mathrm{univ}(\cA)\to \cL(\cH_m)$ be the corresponding $*$-representation of the universal $C^*$-algebra in the sense that $\pi_m=\widehat{\pi_m}\circ\iota$. Then $\pi_m\colon \cA \rightarrow \cL(\cH_m)$ is a maximal representation if and only if $\widehat{\pi_m} \colon C^*_\mathrm{univ}(\cA)\to \cL(\cH_m)$ is faithful.
\end{lemma}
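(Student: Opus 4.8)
The statement is an ``if and only if'', so the plan is to establish both directions using the description of the $C^*$-norm on $C^*_\mathrm{univ}(\cA)$ provided by Theorem~\ref{semi}~(3) together with the fact (recorded just before the lemma) that $\pi_\mathrm{univ}$ is maximal and that for $*$-representations of an honest $C^*$-algebra, weak containment is the same as the inclusion of kernels. The key quantitative identity I would use throughout is that for every $A\in\cA$,
\[
 \|\iota(A)\|_{C^*_\mathrm{univ}(\cA)} \;=\; \sup\big\{\|\pi(A)\|_{\cL(\cH)} : \pi \text{ a positive unital $*$-representation of }\cA\big\},
\]
which follows from the construction of $C^*_\mathrm{univ}(\cA)$ as the completion under the supremum $C^*$-seminorm, or alternatively from Theorem~\ref{semi}~(3).

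\textbf{($\Leftarrow$)} Suppose $\widehat{\pi_m}$ is faithful. Since $\widehat{\pi_m}$ is an injective $*$-homomorphism between $C^*$-algebras it is isometric, so $\|\pi_m(A)\| = \|\widehat{\pi_m}(\iota(A))\| = \|\iota(A)\|_{C^*_\mathrm{univ}(\cA)}$ for all $A\in\cA$. By the displayed formula above, $\|\iota(A)\|$ dominates $\|\pi(A)\|$ for every positive unital $*$-representation $\pi$, hence $\|\pi(A)\|\le\|\pi_m(A)\|$ for all such $\pi$ and all $A$; that is exactly Definition~\ref{def:maximal}, so $\pi_m$ is maximal.

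\textbf{($\Rightarrow$)} Suppose $\pi_m$ is maximal. Applying the definition of maximality to $\pi=\pi_\mathrm{univ}$ gives $\|\pi_\mathrm{univ}(A)\|\le\|\pi_m(A)\|$ for all $A\in\cA$; combined with the reverse inequality $\|\pi_m(A)\|\le\|\pi_\mathrm{univ}(A)\|=\|\iota(A)\|$ (again from the displayed formula, since $\pi_m$ is one of the representations in the supremum), we get $\|\pi_m(A)\| = \|\iota(A)\|_{C^*_\mathrm{univ}(\cA)}$ for every $A\in\cA$. Now take any $b\in C^*_\mathrm{univ}(\cA)$; since $\iota(\cA)$ is dense, pick $A_j\in\cA$ with $\iota(A_j)\to b$. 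Then $\widehat{\pi_m}(\iota(A_j)) = \pi_m(A_j)$ has norm $\|\iota(A_j)\|\to\|b\|$, and by continuity of $\widehat{\pi_m}$, $\widehat{\pi_m}(b) = \lim_j \widehat{\pi_m}(\iota(A_j))$, so $\|\widehat{\pi_m}(b)\| = \lim_j\|\iota(A_j)\| = \|b\|$. Thus $\widehat{\pi_m}$ is isometric on a dense subalgebra, hence on all of $C^*_\mathrm{univ}(\cA)$, and in particular injective, i.e.\ faithful.

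\textbf{Main obstacle.} There is no serious obstacle; the only point requiring a little care is bookkeeping the two seminorm inequalities correctly and remembering that an injective $*$-homomorphism of $C^*$-algebras is automatically isometric (and, conversely, an isometric $*$-homomorphism is injective), so that ``faithful'' can be freely traded for ``isometric'' on $C^*_\mathrm{univ}(\cA)$. One should also note at the outset that the hypothesis $C^*_\mathrm{univ}(\cA)\neq\{0\}$ guarantees $\iota\neq 0$ so the statement is not vacuous, and that $\widehat{\pi_m}$ exists and satisfies $\pi_m = \widehat{\pi_m}\circ\iota$ by the universal property in Definition~\ref{def:universal-C*}.
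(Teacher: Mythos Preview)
Your proof is correct. Both directions are handled cleanly, and the density argument in the forward direction is fine: since $\widehat{\pi_m}$ is a $*$-homomorphism of $C^*$-algebras it is automatically contractive, and a contractive map that is isometric on a dense subset is isometric everywhere (which is exactly what your limit computation shows).

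The paper's proof takes a slightly different, more abstract route. Rather than working directly with norms, it first observes (by density of $\iota(\cA)$) that $\pi_m$ is maximal among positive $*$-representations of $\cA$ if and only if $\widehat{\pi_m}$ is maximal among $*$-representations of the $C^*$-algebra $C^*_\mathrm{univ}(\cA)$, and then invokes the standard $C^*$-algebraic fact (Dixmier) that weak containment between representations of a $C^*$-algebra is equivalent to inclusion of kernels; hence $\widehat{\pi_m}$ weakly contains every $\rho$ precisely when $\ker\widehat{\pi_m}=\{0\}$. Your argument replaces this appeal to the weak-containment/kernel dictionary by the equally standard fact that an injective $*$-homomorphism of $C^*$-algebras is isometric, together with the supremum description of the universal norm. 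The two arguments are close cousins; yours is a bit more elementary and self-contained, while the paper's is more conceptual and slightly shorter.
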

\begin{proof}
	By Definition~\ref{def:universal-C*}, $\iota(\cA)$ is dense in $C^*_\mathrm{univ}(\cA)$. It follows by approximation that $\pi_m$ is maximal among all positive $*$-representation of $\cA$ if and only if $\widehat{\pi_m}$ is maximal among all positive $*$-representation of $C^*_\mathrm{univ}(\cA)$, i.e., $\widehat{\pi_m}$ weakly contains any $*$-representation $\rho$ of $C^*_\mathrm{univ}(\cA)$. Since $C^*_\mathrm{univ}(\cA)$ is a $C^*$-algebra, the latter condition is equivalent to that $\operatorname{ker}(\widehat{\pi_m}) \subset \operatorname{ker}(\rho)$ for any $*$-representation $\rho$ of $C^*_\mathrm{univ}(\cA)$ (see (3.4.5) in \cite[Chapter~3, \S~4]{Dixmier77}), which happens if and only if $\operatorname{ker}(\widehat{\pi_m}) = \{0\}$, i.e., $\widehat{\pi_m}$ is faithful. 
\end{proof}

There are many interesting examples of semi-pre-$C^*$-algebras. 
In Examples~\ref{ex:GA} and~\ref{ex:semi-Roe} below, we will interpret the classes of group algebras and uniform Roe algebras
from the perspective of semi-pre-$C^*$-algebras (see, e.g., \cite{Sc09,Ozawa13} for additional examples). 

We consider first the class of pre-$C^*$-algebras that we will need in the following two sections.

\begin{definition}
A unital pre-$C^*$-algebra $\cA$ is a unital $*$-algebra endowed with a norm $\|\cdot\|_{\cA}$ satisfying all the properties of a 
$C^*$-norm except possibly completeness.
\end{definition}

\begin{example}\label{basis}
Unital pre-$C^*$-algebras can be seen as special semi-pre-$C^*$-algebras as follows. Let $\cA$ be a unital pre-$C^*$-algebra and 
let $\iota\colon \cA\to \cB$ be the canonical injective $*$-homomorphism of $\cA$ into its completion $\cB$. Then $\cA_+=\cA\cap\cB_+$
is an archimedean closed $*$-positive cone making $\cA$ a semi-pre-$C^*$-algebra, with zero infinitesimal ideal. (Note that the archimedian closeness
follows from the fact that $\cB_+$ is norm-closed.) Conversely, if $\cA_+$ is an archimedian closed $*$-positive cone with zero infinitesimal ideal,
then the embedding $\iota\colon \cA\to C^*_\mathrm{univ}(\cA)$ gives a norm on $\cA$, and $\cA$ is a unital pre-$C^*$-algebra with completion 
$C^*_\mathrm{univ}(\cA)$ and with $\cA_+=C^*_\mathrm{univ}(\cA)_+\cap \cA.$
In other words, pre-$C^*$-algebras are the same as semi-pre-$C^*$-algebras whose $*$-positive cones are archimedian and have zero
infinitesimal ideal.
\end{example}

\begin{remark}\label{rem:nonessential}
Note that, in general, even if a positive $*$-representation $\pi\colon \cA\to \cL(\cH)$ is essential (i.e., $\pi(\cA)$ contains
no nonzero compact operators), the corresponding representation $\widehat{\pi}\colon C^*_\mathrm{univ}(\cA)\rightarrow \cL(\cH)$
need not be essential. In fact, consider the pre-$C^*$-algebra of polynomials in $C(\{-1\}\cup [0,1])$ acting by multiplication
on the Hilbert space $\cH=\C\oplus L^2(0,1)$. A nonzero polynomial is always nonzero on $[0,1]$ and hence it is represented
by a non-compact multiplication operator. Nevertheless, the completion $C(\{-1\}\cup [0,1])$ does contain a rank-one projection
given by the characteristic function of $\{-1\}$.
\end{remark}

We conclude this section recalling the notion of completely positive maps on semi-pre-$C^*$-algebras. Let $M_n(\cA)$ be the matrix algebra 
over a semi-pre-$C^*$-algebra $\cA$, which is a semi-pre-$C^*$-algebra with respect to the following $*$-positive cone
\begin{align*}
M_n(\cA)_+=\left\{\left(\sum_{k=1}^m (X_{k,i})^*A_kX_{k,j}\right)_{i,j}: m\in \N, A_k\in \cA_+, (X_{k,i})_{k,i}\in M_{m,n}(\cA)\right\}\;.
\end{align*}
It can be shown that $C^*_\mathrm{univ}(M_n(\cA))=M_n(C^*_\mathrm{univ}(\cA))$ for every $n\in \N$ (see \cite[Lemma~10]{Ozawa13}).

A linear map $\phi\colon\cA\rightarrow \B$ between semi-pre-$C^*$-algebras is \emph{completely positive} (c.p.) if 
$\text{id}\otimes \phi\colon M_n(\cA)\rightarrow M_n(\B)$ is positive for every $n$. If, in addition, $\phi$ is unital it is called
\emph{unital completely positive} (u.c.p.).
It is clear that states and (unital) positive $*$-homomorphisms are u.c.p. maps.

\begin{theorem}[{Stinespring dilation theorem, \cite[Theorem~12]{Ozawa13}}]\label{thm:pre-stine}
Let $\cA$ be a semi-pre-$C^*$-algebra and $\phi\colon\cA\rightarrow \cL(\cH)$ be a c.p. map, then there are a Hilbert space $\widehat{\cH}$, a
positive $*$-representation $\pi\colon\cA\rightarrow \cL(\widehat{\cH})$, and an operator $V\in \cL(\cH,\widehat{\cH})$ such that
\begin{align*}
\phi(A)=V^*\pi(A)V
\end{align*}
for $A\in \cA$. In particular, if $\phi$ is unital, then $V$ is an isometry.
\end{theorem}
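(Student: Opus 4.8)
The plan is to carry the classical GNS-type Stinespring construction over to the semi-pre-$C^*$ setting; the only genuinely new points will be the boundedness of the dilating representation, where the Combes axiom enters, and the verification that $\pi$ is a \emph{positive} $*$-representation of $\cA$.

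First I would form the algebraic tensor product $\cA\otimes\cH$ and equip it with the sesquilinear form determined by $\langle a\otimes\xi,\, b\otimes\eta\rangle_0 := \langle\phi(b^{*}a)\xi,\eta\rangle_\cH$. Complete positivity of $\phi$ should show that this form is positive semi-definite: for $\zeta=\sum_{i=1}^{n} b_i\otimes\xi_i$, the scalar $\langle\zeta,\zeta\rangle_0$ is obtained (after an obvious relabelling of indices) by pairing the vector $(\xi_1,\dots,\xi_n)\in\cH^{\oplus n}$ against $(\mathrm{id}\otimes\phi)\big((b_i^{*}b_j)_{i,j}\big)$, which is a positive element of $M_n(\cL(\cH))$ because $(b_i^{*}b_j)_{i,j}\in M_n(\cA)_+$ (take the single coefficient $A_1=\1$ and $X_{1,i}=b_i$ in the description of $M_n(\cA)_+$ recalled above). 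Dividing out the null space $N=\{\zeta:\langle\zeta,\zeta\rangle_0=0\}$ and completing then produces the Hilbert space $\widehat{\cH}$.

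Next I would define $\pi(a)$ on $\cA\otimes\cH$ by $\pi(a)(b\otimes\xi)=(ab)\otimes\xi$ and check that it descends to $\widehat{\cH}$. This is where the semi-pre-$C^*$ structure is essential: by the Combes axiom $\cA=\cA^{\text{bdd}}$, so for a given $a\in\cA$ there is $R>0$ with $R\1-a^{*}a\in\cA_+$; then the matrix $\big(b_i^{*}(R\1-a^{*}a)b_j\big)_{i,j}$ lies in $M_n(\cA)_+$ (single coefficient $A_1=R\1-a^{*}a\in\cA_+$ and $X_{1,i}=b_i$), and applying $\mathrm{id}\otimes\phi$ and pairing against $(\xi_1,\dots,\xi_n)$ yields $R\,\|\zeta\|_0^{2}-\|\pi(a)\zeta\|_0^{2}\ge 0$. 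Hence $\pi(a)$ is bounded with $\|\pi(a)\|\le\sqrt{R}$, it maps $N$ into $N$, and so descends to the quotient and extends to $\cL(\widehat{\cH})$. Multiplicativity, unitality ($\pi(\1)=\mathrm{id}$), and $\pi(a)^{*}=\pi(a^{*})$ all read off immediately from the formula for $\langle\cdot,\cdot\rangle_0$. Positivity of $\pi$ is the same matrix trick once more: for $A\in\cA_+$ the operator $\pi(A)$ is self-adjoint (since $A=A^{*}$), and $\langle\pi(A)\zeta,\zeta\rangle_0=\sum_{i,j}\langle\phi(b_j^{*}Ab_i)\xi_i,\xi_j\rangle\ge 0$ because $(b_i^{*}Ab_j)_{i,j}\in M_n(\cA)_+$; therefore $\pi(\cA_+)\subseteq\cL(\widehat{\cH})_+$, i.e.\ $\pi$ is a positive $*$-representation.

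Finally I would set $V\xi:=\1\otimes\xi\in\widehat{\cH}$. Then $\|V\xi\|_0^{2}=\langle\phi(\1)\xi,\xi\rangle$, so $V\in\cL(\cH,\widehat{\cH})$, and $V$ is an isometry precisely when $\phi(\1)=1$, i.e.\ when $\phi$ is unital. A one-line adjoint computation gives $V^{*}(a\otimes\xi)=\phi(a)\xi$, and since $\pi(a)V\xi=(a\1)\otimes\xi=a\otimes\xi$, this forces $V^{*}\pi(a)V\xi=\phi(a)\xi$ for all $a\in\cA$ and $\xi\in\cH$, completing the argument. The step I expect to be the main obstacle — and the only one with no analogue in the $C^*$-algebraic proof — is the boundedness of $\pi(a)$: in the purely algebraic world $\pi(a)$ need not be bounded at all, and it is exactly the Combes axiom, together with the compatibility of the distinguished cones $\cA_+$ and $M_n(\cA)_+$, that rescues the estimate.
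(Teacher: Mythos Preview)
The paper does not give its own proof of this theorem; it is quoted verbatim from \cite[Theorem~12]{Ozawa13} and used as a black box. Your proposal is correct and is exactly the standard argument one would expect (and, indeed, is the argument Ozawa gives): the classical GNS/Stinespring construction on $\cA\otimes\cH$, with the Combes axiom supplying the bound $\|\pi(a)\zeta\|_0^2\le R\|\zeta\|_0^2$ and the explicit description of $M_n(\cA)_+$ supplying both positive semi-definiteness of the form and positivity of $\pi$. There is nothing to compare against in the present paper, and nothing to correct in your outline.
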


%%%%%%%%%%%%%%%%%%%%%%%%%%%%%%%%%%%%%%%%%%%%%%%%%%%%%%%%%%%%%%%%%%%%%%%%%%%%%%%
\section{F\o lner conditions for semi-pre-$C^*$-algebras}\label{sec:Foelner-star}
%%%%%%%%%%%%%%%%%%%%%%%%%%%%%%%%%%%%%%%%%%%%%%%%%%%%%%%%%%%%%%%%%%%%%%%%%%%%%%%

In this section we will start considering F\o lner-type conditions in the context of Hilbert space operators
and other classes of $*$-algebras closely related to $C^*$-algebras. More precisely, we will
generalize results stated for F\o lner $C^*$-algebras in Section~4 of \cite{AL14} to the context of 
semi-pre-$C^*$-algebras. 

%%%%%%%%%%%%%%%%%%%%%%%%%%%%%%%%%%%%%%%%%%%
\subsection{F\o lner semi-pre-$C^*$-algebras}\label{subsec:foelner}
%%%%%%%%%%%%%%%%%%%%%%%%%%%%%%%%%%%%%%%%%%%
Now we turn to the discussion of F\o lner conditions for operators. This notion, and its relation to amenable tracial states, has
its origin in Connes' seminal paper on the classification of injective factors \cite{Connes76,ConnesIn76}. The following definition is an operator theoretic analogue of the previous
Definitions~\ref{def:metric-amenability} and~\ref{def:alg-amenable2}, where the r\^{o}le of F\o lner nets is played by nets of nonzero finite-rank projections.

\begin{definition}\label{def:Foelner2}
Let $\mathcal{T}\subset\cL(\cH)$ be a collection of linear and bounded
operators on a complex separable Hilbert space 
$\mathcal{H}$.ç
 \begin{itemize}
  \item[(i)] A net of nonzero finite rank orthogonal projections $\{P_i\}_{i\in I}\subset\cL(\cH)$ is called a {\em F{\o}lner net for $\mathcal{T}$} if
  \begin{equation}\label{eq:F1-2}
   \lim_{i} \frac{\|T P_i-P_i T\|_2}{\|P_i\|_2} = 0\;\;,\quad \text{for all } T\in\mathcal{T} \;,
  \end{equation}
  where $\|\cdot\|_2$ denotes the Hilbert-Schmidt norm. We call $\{P_i\}_{i\in I}$ a \emph{proper} F{\o}lner net if, in addition, 
  it converges to $\1 \in \cL(\cH)$ in the strong operator topology.
  \item[(ii)] For a finite set $\cF\subset\mathcal{T}$ and $\varepsilon \geq 0$, an \emph{$(\cF, \varepsilon)$-F\o lner projection} is a nonzero finite 
  rank orthogonal projection $P \in \cL(\cH)$ such that
  \begin{equation}\label{eq:FC2}
   \frac{\|T P-P T\|_2}{\|P\|_2} \leq \varepsilon \;\;,\quad \text{for all } T\in\cF \;.
  \end{equation}
  \item[(iii)] $\mathcal{T}$ satisfies the {\em F\o lner condition} if for any finite set $\cF\subset\mathcal{T}$ and any $\varepsilon >0$, 
  there exists an $(\cF, \varepsilon)$-F\o lner projection.
  \item[(iv)] $\mathcal{T}$ satisfies the {\em proper} F\o lner condition if for any finite set $\cF\subset\mathcal{T}$, any finite rank orthogonal
  projection $Q \in \cL(\cH)$, and any $\varepsilon >0$, there exists an $(\cF, \varepsilon)$-F\o lner projection $P \in \cL(\cH)$ such that $P \geq Q$.
\end{itemize}
\end{definition}

\begin{remark}\label{rem:trace-class}
      It is easy to see that the (proper) F\o lner condition is equivalent to having a (proper) F{\o}lner net.  
      Note that if $\mathcal{T}\subset\cL(\cH)$ has a F\o lner net $\{P_i\}_{i\in I}$, then it is also a F\o lner net for 
      $C^*(\cT,\1)$ (the $C^*$-algebra generated by $\cT$ and $\1$). See \cite{AL14} for additional results. 
\end{remark}

\begin{lemma}\label{lem:proper-Foelner-rank}
A collection $\mathcal{T}$ of linear and bounded operators in $\cL(\cH)$ satisfies the proper F\o lner condition if and only if for any finite set $\cF\subset\mathcal{T}$ and any $\varepsilon, N >0$, there exists an $(\cF, \varepsilon)$-F\o lner projection $P \in \cL(\cH)$ such that $\| P \|_2 \geq N$.
\end{lemma}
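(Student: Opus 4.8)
The plan is to prove Lemma~\ref{lem:proper-Foelner-rank} by showing the two implications separately, with the forward direction being essentially immediate and the reverse direction requiring a small amount of work to promote a large-rank F\o lner projection to one dominating a prescribed finite-rank projection $Q$.

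First I would handle the easy implication: if $\cT$ satisfies the proper F\o lner condition, then given $\cF$ and $\varepsilon, N>0$, I pick any finite-rank orthogonal projection $Q$ with $\|Q\|_2 \geq N$ (e.g.\ a projection onto an $\lceil N^2 \rceil$-dimensional subspace, using that $\cH$ is separable hence infinite-dimensional in the relevant cases — and if $\cH$ is finite-dimensional the whole statement is trivial with $P=\1$). The proper F\o lner condition then yields an $(\cF,\varepsilon)$-F\o lner projection $P \geq Q$, and monotonicity of the Hilbert--Schmidt norm under $P \geq Q$ gives $\|P\|_2 \geq \|Q\|_2 \geq N$. So this direction is a one-liner.

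For the reverse implication, suppose that for every finite $\cF \subset \cT$ and every $\varepsilon, N>0$ there is an $(\cF,\varepsilon)$-F\o lner projection of Hilbert--Schmidt norm at least $N$; I must produce, given also a finite-rank projection $Q$, an $(\cF,\varepsilon)$-F\o lner projection $P \geq Q$. The natural idea is to choose an $(\cF, \delta)$-F\o lner projection $P_0$ with $\|P_0\|_2$ huge (for a $\delta$ to be tuned), and then set $P := P_0 \vee Q$, the projection onto $\operatorname{Ran}(P_0) + \operatorname{Ran}(Q)$. Then $P \geq Q$ by construction, and since $\operatorname{rank}(Q) =: m < \infty$, we have $0 \leq P - P_0 \leq$ a projection of rank at most $m$, so $\|P - P_0\|_2 \leq \sqrt{m}$; in particular $\|P\|_2 \geq \|P_0\|_2$ is still large, and $\|P\|_2 \leq \|P_0\|_2 + \sqrt{m}$. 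For each $T \in \cF$ one estimates, using the triangle inequality for $\|\cdot\|_2$ and that commutators are controlled by $\|T\|$ times the Hilbert--Schmidt norm of the perturbation,
\begin{align*}
\|TP - PT\|_2 &\leq \|T P_0 - P_0 T\|_2 + \|T(P - P_0) - (P-P_0)T\|_2 \\
&\leq \delta \|P_0\|_2 + 2\|T\|\,\sqrt{m}.
\end{align*}
Dividing by $\|P\|_2 \geq \|P_0\|_2$, the first term is at most $\delta$ and the second is at most $2\|T\|\sqrt{m}/\|P_0\|_2$, which can be made small by taking $\|P_0\|_2$ large. So choosing $\delta = \varepsilon/2$ and then $N$ large enough that $2 \max_{T \in \cF}\|T\|\sqrt{m}/N \leq \varepsilon/2$, the resulting $P$ is an $(\cF,\varepsilon)$-F\o lner projection dominating $Q$.

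The main obstacle, such as it is, is purely bookkeeping: making sure the rank-$m$ correction $P - P_0$ genuinely has Hilbert--Schmidt norm bounded by $\sqrt{m}$ (it is a positive contraction of rank $\leq m$, so its Hilbert--Schmidt norm is bounded by its rank's square root), and that one may freely prescribe $N$ \emph{before} knowing $m$ — which is fine, since in the reverse direction $Q$ (hence $m$) is given first, and only then do we invoke the hypothesis with a suitably large $N$. One should also note at the outset that if $\dim \cH < \infty$ there are no nonzero finite-rank F\o lner phenomena beyond $P = \1$ and both conditions hold trivially (or vacuously, depending on whether $\cT$ contains a non-scalar operator), so the interesting case is $\dim \cH = \infty$, where projections of arbitrarily large rank, and in particular one of rank $\geq N^2$, certainly exist.
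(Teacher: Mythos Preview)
Your proposal is correct and follows essentially the same approach as the paper: both treat the forward direction as immediate, and for the reverse direction both take $P = P_0 \vee Q$ for an $(\cF,\varepsilon/2)$-F\o lner projection $P_0$ of large Hilbert--Schmidt norm, then bound $\|TP-PT\|_2$ by splitting off the commutator with $P-P_0$ and using $\|P-P_0\|_2 \leq \|Q\|_2$. Your choice of $N$ matches the paper's $N = \max_{T\in\cF} 4\|T\|\,\|Q\|_2/\varepsilon$ exactly (your $\sqrt{m}$ is just $\|Q\|_2$).
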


\begin{proof}
 The ``only if'' part is evident. For the ``if'' part, we shall directly verify the definition of the proper F\o lner condition. Given any finite set $\cF\subset\mathcal{T}$, any finite rank orthogonal projection $Q \in \cL(\cH)$, and any $\varepsilon >0$, we put 
 \[
  N = \max_{T \in \cF} \frac{4 \|T\| \|Q\|_2 }{ \varepsilon}
 \]
 and use the assumption to find an $(\cF, \frac{\varepsilon}{2} )$-F\o lner projection $P_0 \in \cL(\cH)$ 
 such that $\| P_0 \|_2 \geq N$. Let $P = P_0 \vee Q$ and $P^\perp = P - P_0$. Then we have $\|P\|_2 \geq \|P_0\|_2 \geq N$ 
 and $\|P^\perp\|_2 \leq \|Q\|_2$. Thus for any $T\in\mathcal{T}$, we have
 \begin{align*}
   \frac{\|T P-P T\|_2}{\|P\|_2} \leq \frac{\|T P_0 - P_0 T\|_2}{\|P\|_2} + \frac{\|T P^\perp - P^\perp T\|_2}{\|P\|_2} \leq \frac{\varepsilon}{2}  + \frac{2 \|T\| \|P^\perp\|_2}{\|P\|_2} = \varepsilon \; .
  \end{align*}
  Hence $P$ is an $(\cF, \varepsilon)$-F\o lner projection $P \in \cL(\cH)$ such that $P \geq Q$. 
\end{proof}

It is worth mentioning that the existence of a F\o lner net for a set of operators $\mathcal{T}$ is a
weaker notion than quasidiagonality. Recall that a separable set of operators
$\mathcal{T}\subset\mathcal{L}(\mathcal{H})$ is called 
quasidiagonal if there exists an increasing sequence of finite-rank
projections $\{P_n\}_{n\in \N}$ converging strongly to $\1$
and such that
\begin{equation}\label{QD}
\lim_{n}\|T P_n-P_n T\|=0\;,\quad \text{for all } T\in\mathcal{T} \;.
 \end{equation}
(See, e.g., \cite{Halmos70,Voiculescu93} or Chapter~16 in \cite{bBrown08} for additional motivation and results.)

Voiculescu's seminal article \cite{voicu91} gives an abstract characterization 
of quasidiagonality for unital separable $C^*$-algebras in terms of unital completely 
positive (u.c.p.) maps into matrices. This has become by now the
standard definition of quasidiagonality for operator algebras (see,
for example, \cite[definition~7.1.1]{bBrown08}). In \cite{AL14} we introduced the class
of unital and separable F\o lner $C^*$-algebras in terms of a sequence of u.c.p. maps into
matrices that are asymptotically multiplicative with respect to a weaker norm than the
operator norm. (This class was called {\em weakly hypertracial} in \cite{Bedos95}).
We will generalize this class of $C^*$-algebras to the context of 
semi-pre-$C^*$-algebras and prove several characterizations. Our results also show
that the separability assumption in \cite{AL14} is not essential.

\begin{definition}\label{def:FA2}
A unital semi-pre-$C^*$-algebra $\cA$ is {\em F\o lner} if there exists a net of u.c.p. maps
$\varphi_i\colon\cA\to M_{k(i)}(\C)$ which is asymptotically multiplicative, i.e.,
\begin{equation}\label{eq:assym-mult}
  \lim_i\|\varphi_i(AB)-\varphi_i(A)\varphi_i(B)\|_{2,\mathrm{tr}}=0\;,\quad \text{for all } A,B\in\cA\;,
\end{equation}
where $\|F\|_{2,\mathrm{tr}}\colon=\sqrt{\mathrm{tr}(F^*F)}$, $F\in M_{n}(\C)$ and 
$\mathrm{tr}(\cdot)$ denotes the unique tracial state on a matrix algebra
$M_{n}(\C)$. If, in particular, $\cA$ is a unital (pre-) $C^*$-algebra satisfying (\ref{eq:assym-mult})
we call it F\o lner (pre-) $C^*$-algebra.
\end{definition}

The class of F\o lner $C^*$-algebras have several properties. 
E.g., if a nonzero quotient of a unital $C^*$-algebra $\cA$ is a F\o lner $C^*$-algebra,
then $\cA$ itself is a F\o lner $C^*$-algebra (see Corollary~\ref{cor:nonzeroquotients} for
details). Moreover, we can also characterize F\o lner reduced crossed products. The
proof of the following result follows from Proposition~2.12 in
\cite{Bedos95}.

\begin{proposition}\label{pro:crossedprods} 
Let $\Gamma $ be a countable discrete group
and let $\alpha $ be an action of $\Gamma $ on a unital $C^*$-algebra $\cA$. Then the following conditions are equivalent:
\begin{enumerate}
\item $\cA \rtimes _{\alpha,r}\Gamma $ is a F\o lner $C^*$-algebra.
\item $\Gamma $ is amenable and $\cA$ has a $\Gamma
$-invariant amenable trace.
\item $\cA$ is a F\o lner $C^*$-algebra and $\Gamma $ is an
amenable group.
\end{enumerate}
\end{proposition}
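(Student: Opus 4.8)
The plan is to translate all three conditions into statements about \emph{amenable tracial states} and then invoke the behaviour of amenable traces under reduced crossed products, which is essentially the content of B\'edos' \cite[Prop.~2.12]{Bedos95}. Recall from Theorem~\ref{theorem:characFolner2} that a unital $C^*$-algebra $\cB$ is a F\o lner $C^*$-algebra if and only if it admits an amenable trace, i.e.\ a tracial state $\tau$ for which, in some (equivalently any) faithful representation $\cB\subseteq\cL(\cH)$, there is a state $\psi$ on $\cL(\cH)$ with $\psi|_\cB=\tau$ and $\psi(Tb)=\psi(bT)$ for all $T\in\cL(\cH)$, $b\in\cB$. Under this equivalence, $(1)$ becomes ``$\cA\rtimes_{\alpha,r}\Gamma$ has an amenable trace'' and $(3)$ becomes ``$\cA$ has an amenable trace and $\Gamma$ is amenable''. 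I would establish $(1)\Rightarrow(2)$, then $(2)\Leftrightarrow(3)$, and finally $(2)\Rightarrow(1)$.

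For $(1)\Rightarrow(2)$: let $\tau$ be an amenable trace on $\cA\rtimes_{\alpha,r}\Gamma$, with canonical unitaries $\{u_g\}_{g\in\Gamma}$ implementing $\alpha$. The restriction $\tau_0:=\tau|_\cA$ is a trace, and it is $\Gamma$-invariant because $\tau_0(\alpha_g(a))=\tau(u_gau_g^*)=\tau(a)$ by traciality. Since amenability of a trace does not depend on the chosen faithful representation, restricting a hypertrace for $\tau$ to the $C^*$-subalgebra $\cA$ shows that $\tau_0$ is an amenable trace on $\cA$, and restricting it to the canonical copy of $C^*_r(\Gamma)$ shows that $C^*_r(\Gamma)$ carries an amenable trace. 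Realizing $C^*_r(\Gamma)$ on $\ell^2(\Gamma)$ via the left regular representation, the associated hypertrace $\psi$ on $\cL(\ell^2\Gamma)$ restricts on $\ell^\infty(\Gamma)\subseteq\cL(\ell^2\Gamma)$ to a finitely additive probability measure, and this measure is left-invariant because $\lambda_g\chi_S\lambda_g^*=\chi_{gS}$ forces $\psi(\chi_{gS})=\psi(\lambda_g\chi_S\lambda_g^*)=\psi(\chi_S)$ by centrality of $\psi$ over $C^*_r(\Gamma)$. Hence $\Gamma$ is amenable, proving $(2)$.

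$(2)\Rightarrow(3)$ is immediate since a $\Gamma$-invariant amenable trace is in particular an amenable trace. For $(3)\Rightarrow(2)$, let $\tau_0$ be an amenable trace on $\cA$ and $m$ a left-invariant mean on the amenable group $\Gamma$; then $\tau(a):=m_g\big(\tau_0(\alpha_{g^{-1}}(a))\big)$ is a $\Gamma$-invariant trace on $\cA$, and it is amenable because it lies in the weak-$*$ closed convex hull of the orbit $\{\tau_0\circ\alpha_{g^{-1}}:g\in\Gamma\}$, while the set of amenable traces on $\cA$ is convex, weak-$*$ closed and $\alpha$-invariant. Finally, for $(2)\Rightarrow(1)$, given a $\Gamma$-invariant amenable trace $\tau$ on $\cA$ and $\Gamma$ amenable, put $\widetilde\tau:=\tau\circ E$ with $E\colon\cA\rtimes_{\alpha,r}\Gamma\to\cA$ the canonical conditional expectation; a short computation with elements of the form $au_g$ shows that the $\Gamma$-invariance of $\tau$ is precisely what makes $\widetilde\tau$ tracial. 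By \cite[Prop.~2.12]{Bedos95} (compare also the amenable-trace chapter of \cite{bBrown08}), $\widetilde\tau$ is then an amenable trace on $\cA\rtimes_{\alpha,r}\Gamma$, so $\cA\rtimes_{\alpha,r}\Gamma$ is F\o lner by Theorem~\ref{theorem:characFolner2}.

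The main obstacle is this last implication: one must produce a state $\widetilde\psi$ on $\cL(\cH)$ (for $\cH$ a faithful representation space of the crossed product, e.g.\ the GNS space of $\widetilde\tau$) that extends $\widetilde\tau$ and is central over \emph{all} of $\cA\rtimes_{\alpha,r}\Gamma$, not merely over $\cA$. The idea is to start from a hypertrace $\psi_0$ for $\tau$ over $\cA$ and average it along a F\o lner net of $\Gamma$ so that the resulting state becomes central over the implementing unitaries $u_g$ as well; matching this averaging with the \emph{reduced} (rather than full) crossed-product structure, and with the $\Gamma$-invariance of $\tau$, is exactly the delicate point carried out in \cite[Prop.~2.12]{Bedos95}. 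An equivalent route, better adapted to Definition~\ref{def:FA2}, is to construct the asymptotically multiplicative net of u.c.p.\ maps $\cA\rtimes_{\alpha,r}\Gamma\to M_{k(i)}(\C)$ directly, combining such a net for $\cA$ with the matrix compressions coming from a F\o lner sequence of $\Gamma$; when $\cA$ is non-separable one first restricts to a separable $\Gamma$-invariant $C^*$-subalgebra of $\cA$ (possible as $\Gamma$ is countable) before applying \cite[Prop.~2.12]{Bedos95}, and then passes back using that the F\o lner property is preserved under inductive limits of unital $C^*$-subalgebras.
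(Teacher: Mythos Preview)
Your proposal is correct and, at its core, takes the same route as the paper: the paper's entire proof is the single sentence ``The proof of the following result follows from Proposition~2.12 in \cite{Bedos95}'', and you likewise invoke that reference for the only genuinely non-trivial step, namely $(2)\Rightarrow(1)$. The difference is purely expository: you unpack the easier implications $(1)\Rightarrow(2)$ and $(2)\Leftrightarrow(3)$ explicitly (restricting hypertraces, extracting an invariant mean from a hypertrace for $C^*_r(\Gamma)$, and averaging an amenable trace over a mean on $\Gamma$ using that the set of amenable traces is convex, weak-$*$ closed and $\alpha$-invariant), whereas the paper leaves all of this implicit in the citation.
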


\begin{example}\label{ex:Rosenberg}
Let $\Gamma$ be a countable discrete group. Due to the recent proof of Rosenberg's conjecture in \cite{TWW15}, we have the following equivalent conditions
(see also Sections~2.6 and 7.1 in \cite{bBrown08} and Proposition~\ref{pro:crossedprods} above):
\begin{enumerate}
	\item $\Gamma$ is amenable;
	\item $C^*_r(\Gamma)$ is quasidiagonal; 
	\item $C^*_r(\Gamma)$ is nuclear;
	\item $C^*_r(\Gamma)$ is a F\o lner $C^*$-algebra.
\end{enumerate}

\end{example}

Below we will present several characterizations of F\o lner semi-pre-$C^*$-algebras. In particular, we will
establish the relation to amenable traces, introduced below. We use the name trace
meaning a tracial state, i.e., a normalized positive linear functional having the tracial property.

\begin{definition}\label{def:amenable}
Let $\cA\subseteq \cL(\cH)$ be a unital pre-$C^*$-algebra with the $*$-positive cone 
$\cA\cap \cL(\cH)_+$. A state $\tau$ on $\cA$ is called an \emph{amenable trace} if there exists 
a state $\psi$ on $\cL(\cH)$ which extends $\tau$ and has $\mathcal{A}$ in its centralizer, i.e., 
\[  
\psi(TA)=\psi(AT)\;,\quad \text{for all } T\in \cL(\cH) \text{ and } A\in \cA\;.
\]
The state $\psi$ is called a \emph{hypertrace} for $\cA$.
\end{definition}

If the pre-$C^*$-algebra $\cA$ in the above definition is in fact a $C^*$-algebra, then the definition of amenable 
trace does not depend on the choice of embedding $\cA\subseteq \cL(\cH)$ (see \cite[Proposition 6.2.2]{Ozawa13}). 
Since every $C^*$-algebra is spanned by its unitaries, we see that a pre-$C^*$-algebra $\cA$ admits an amenable 
trace if and only if the norm closure $\bar{\cA}$ in $\cL(\cH)$ admits an amenable trace in the sense of \cite[Definition 6.2.1]{Ozawa13}.

Below is our first main result in this section, which gives a variety of characterizations for F\o lner semi-pre-$C^*$-algebras. Again, we assume that all representations involved are unital.

\begin{theorem} \label{theorem:characFolner2}
Let $\cA$ be a semi-pre-$C^*$-algebra and $C^*_\mathrm{univ}(\cA)$ its universal $C^*$-algebra which we assume
to be nonzero. Then the following conditions are equivalent:
\begin{enumerate}
  \item\label{theorem:characFolner2-nonzero-Foelner} There exists a nonzero positive $*$-representation $\pi\colon \cA\rightarrow\cL(\cH)$ such that 
  $\pi(\cA)$ has a F\o lner net.
  \item\label{theorem:characFolner2-nonzero-trace} There exists a nonzero positive $*$-representation $\pi\colon \cA\rightarrow\cL(\cH)$ such that $\pi(\cA)$ has an amenable trace.
  \item\label{theorem:characFolner2-maximal-trace} Every maximal $*$-representation $\pi_m\colon \cA\rightarrow\cL(\cH_m)$ 
  (cf., Definition~\ref{def:maximal})
             satisfies that $\pi_m(\cA)$ has an amenable trace.
  \item\label{theorem:characFolner2-maximal-Foelner} There exists a maximal and essential $*$-representation $\pi_m\colon \cA\rightarrow\cL(\cH_m)$ 
             such that $\pi_m(\cA)$ has a proper F\o lner net. 
  \item\label{theorem:characFolner2-exists-faithful-Foelner} There exists a faithful and essential $*$-representation $\widehat{\pi} \colon C^*_\mathrm{univ}(\cA)\rightarrow\cL(\cH)$ 
             such that $\widehat{\pi} ( C^*_\mathrm{univ}(\cA)) $ has a proper F\o lner net.
  \item\label{theorem:characFolner2-forall-faithful-Foelner} Every faithful and essential $*$-representation $\widehat{\pi} \colon C^*_\mathrm{univ}(\cA)\rightarrow\cL(\cH)$ 
             satisfies that \\ 
             $\widehat{\pi} ( C^*_\mathrm{univ}(\cA)) $ has a proper F\o lner net.
  \item\label{theorem:characFolner2-net} There exists a net of triples $\{(\cH_i, \pi_i, P_i)\}_{i\in I} $, where each $\pi_i$ is a nonzero positive $*$-representation of 
             $\cA$ on the Hilbert space $\cH_i $ and $P_i \in \cL(\cH_i)$ is a finite rank nonzero orthogonal projection, such that 
 \begin{equation}\label{eq:assymp}
  \lim_{i} \frac{\| \pi_i (A) P_i-P_i \pi_i (A) \|_{2}}{\|P_i\|_{2} } = 0\;\;,\quad \text{for all } A\in\cA \;,
 \end{equation}
 where $\|\cdot\|_{2}$ denotes the Hilbert-Schmidt norm in $\cL(\cH_i)$.
\item\label{theorem:characFolner2-definition} $\cA$ is a F\o lner semi-pre-$C^*$-algebra.
\item\label{theorem:characFolner2-universal} $C^*_\mathrm{univ}(\cA)$ is a F\o lner $C^*$-algebra.
 \end{enumerate}
\end{theorem}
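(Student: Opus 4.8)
Throughout write $B:=C^*_\mathrm{univ}(\cA)$ and let $\iota\colon\cA\to B$ be the canonical positive $*$-homomorphism, which has norm-dense image. The plan is to prove a single cycle of implications $(1)\Rightarrow(7)\Rightarrow(8)\Leftrightarrow(9)\Rightarrow(3)\Rightarrow(2)\Rightarrow(1)$, using $(8)\Leftrightarrow(9)$ as the bridge from the semi-pre-$C^*$-algebra $\cA$ to the $C^*$-algebra $B$, and then to attach conditions $(4)$, $(5)$ and $(6)$ to this cycle via the corresponding $C^*$-algebraic statements for $B$, invoking the results of \cite{AL14} on F\o lner $C^*$-algebras (whose proofs are net-based and so do not need the separability hypothesis made there). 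The basic bookkeeping device is the dictionary between positive $*$-representations of $\cA$ and $*$-representations of $B$: if $\pi=\widehat\pi\circ\iota$, then $\pi(\cA)=\widehat\pi(\iota(\cA))$ is norm-dense in the $C^*$-algebra $\widehat\pi(B)$ and contains $\1$, so $C^*(\pi(\cA),\1)=\widehat\pi(B)$; consequently, by Remark~\ref{rem:trace-class}, $\pi(\cA)$ has a (proper) F\o lner net iff $\widehat\pi(B)$ does; by the remark after Definition~\ref{def:amenable}, $\pi(\cA)$ has an amenable trace iff $\widehat\pi(B)$ does; and by Lemma~\ref{lem:max-faithful}, $\pi$ is maximal iff $\widehat\pi$ is faithful.

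For the bridge $(8)\Leftrightarrow(9)$, if $\varphi_i\colon B\to M_{k(i)}(\C)$ is an asymptotically multiplicative net of u.c.p.\ maps, then $\varphi_i\circ\iota$ is such a net on $\cA$, since $\iota$ is a positive $*$-homomorphism; conversely, given such a net $\psi_i$ on $\cA$, Stinespring's theorem (Theorem~\ref{thm:pre-stine}) writes $\psi_i=V_i^*\pi_i(\cdot)V_i$ with $\pi_i$ a positive $*$-representation and $V_i$ an isometry, and then $\varphi_i:=V_i^*\widehat{\pi_i}(\cdot)V_i\colon B\to M_{k(i)}(\C)$ is u.c.p.\ with $\varphi_i\circ\iota=\psi_i$, asymptotic multiplicativity over all of $B$ following from that over the dense $*$-subalgebra $\iota(\cA)$ because u.c.p.\ maps into matrices are contractive and $\|\cdot\|_{2,\mathrm{tr}}\le\|\cdot\|$. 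In the cycle proper, $(1)\Rightarrow(7)$ is trivial, and $(3)\Rightarrow(2)$ follows by applying $(3)$ to the universal positive $*$-representation $\pi_\mathrm{univ}$, which is maximal and, since $B\ne 0$, nonzero. For $(7)\Rightarrow(8)$, view the compressions $\varphi_i:=P_i\pi_i(\cdot)P_i$ as u.c.p.\ maps $\cA\to M_{k(i)}(\C)$ with $k(i)=\dim P_i\cH_i$; from $\varphi_i(AB)-\varphi_i(A)\varphi_i(B)=P_i\pi_i(A)(\1-P_i)\pi_i(B)P_i$ and $(\1-P_i)\pi_i(B)P_i=(\1-P_i)[\pi_i(B),P_i]$ one gets $\|\varphi_i(AB)-\varphi_i(A)\varphi_i(B)\|_{2,\mathrm{tr}}\le\|\pi_i(A)\|\,\|[\pi_i(B),P_i]\|_2/\|P_i\|_2$, and since $\|\pi_i(A)\|=\|\widehat{\pi_i}(\iota(A))\|\le\|\iota(A)\|_B$ is bounded independently of $i$, \eqref{eq:assymp} forces this to $0$, so $\cA$ is F\o lner. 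For $(9)\Rightarrow(3)$: a F\o lner $C^*$-algebra has an amenable trace \cite{AL14}, and if $\pi_m$ is maximal then $\widehat{\pi_m}$ is faithful, so $\widehat{\pi_m}(B)\cong B$ has an amenable trace, hence so does $\pi_m(\cA)$ by the dictionary. Finally $(2)\Rightarrow(1)$ is the analytic heart: from a hypertrace for $\pi(\cA)\subseteq\cL(\cH)$, approximate it in the weak-$*$ topology by normal states and truncate the corresponding density operators to produce, for each finite $\cF\subset\pi(\cA)$ and $\varepsilon>0$, an $(\cF,\varepsilon)$-F\o lner projection; this Day--Namioka/Connes-type construction is carried out exactly as in \cite{AL14} (applied to $\pi(\cA)$, or to its closure $\widehat\pi(B)$), and I expect it to be the step demanding the most technical care, although it is in essence inherited from \cite{AL14}.

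It remains to attach $(4)$, $(5)$, $(6)$. Conditions $(5)$ and $(6)$ are statements about $B$ itself; a faithful essential representation of $B$ always exists (amplify any faithful representation by $\ell^2(\N)$), and by the $C^*$-algebraic theorem of \cite{AL14}, $(9)$ is equivalent to $\widehat\pi(B)$ having a proper F\o lner net for \emph{some}, equivalently for \emph{every}, faithful essential representation $\widehat\pi$ of $B$, giving $(9)\Leftrightarrow(5)\Leftrightarrow(6)$. For $(4)$, assuming $(9)$ fix a faithful essential representation $\widehat\rho$ of $B$ with a proper F\o lner net $\{P_i\}$ and put $\pi_m:=\widehat\rho\circ\iota$; then $\widehat{\pi_m}=\widehat\rho$ is faithful, so $\pi_m$ is maximal (Lemma~\ref{lem:max-faithful}), $\pi_m(\cA)\subseteq\widehat\rho(B)$ contains no nonzero compact operators, so $\pi_m$ is essential, and $\{P_i\}$ is a fortiori a proper F\o lner net for $\pi_m(\cA)$; conversely $(4)\Rightarrow(1)$ is immediate. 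The one genuinely delicate point here is that one must always \emph{pull back} a faithful essential representation of $B$ and never push a representation of $\cA$ forward through $\iota$, since by Remark~\ref{rem:nonessential} essentiality can be destroyed by $\iota$; this, together with the routine verification that the net-based arguments of \cite{AL14} survive dropping separability, is the main obstacle of the proof, the deeper analytic input being quoted from \cite{AL14}.
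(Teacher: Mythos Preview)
Your proposal is correct and establishes all the equivalences, but the organization differs substantially from the paper's proof. The paper proves the linear cycle $(1)\Rightarrow(2)\Rightarrow(3)\Rightarrow(4)\Rightarrow(5)\Rightarrow(6)\Rightarrow(7)\Rightarrow(1)$ and then appends $(7)\Leftrightarrow(8)\Leftrightarrow(9)$; in particular the paper's $(2)\Rightarrow(3)$ is done \emph{intrinsically} via Arveson's extension theorem and the multiplicative-domain trick (transporting a hypertrace along a u.c.p.\ extension of the $*$-homomorphism $\pi_m(\cA)\to\rho(\cA)$), while its $(5)\Rightarrow(6)$ uses the non-separable Voiculescu theorem of Hadwin to show that all faithful essential representations of $B$ are approximately unitarily equivalent and hence share proper F\o lner nets. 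Your route is more extrinsic: you reduce almost everything to the $C^*$-algebra $B=C^*_\mathrm{univ}(\cA)$ via the clean density argument $(8)\Leftrightarrow(9)$ and then quote the $C^*$-algebraic characterizations of \cite{AL14} wholesale, getting $(9)\Rightarrow(3)$ from the representation-independence of amenable traces on $C^*$-algebras rather than from Arveson. This is shorter and conceptually tidy, and your direct proof of $(8)\Rightarrow(9)$ (Stinespring plus density, using $\|\cdot\|_{2,\mathrm{tr}}\le\|\cdot\|$) is in fact neater than the paper's, which reaches $(9)$ only after having gone through $(5)$.

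The one place where your hand-wave deserves a warning flag is the phrase ``the net-based arguments of \cite{AL14} survive dropping separability'': most of them do, but the equivalence $(5)\Leftrightarrow(6)$ (some versus every faithful essential representation having a proper F\o lner net) is precisely the step that needs Voiculescu's theorem, and in the non-separable setting this is Hadwin's theorem, which the paper invokes explicitly. You should name this ingredient rather than fold it into ``routine verification''. Your observation that one must pull back essential representations of $B$ through $\iota$ rather than push forward (Remark~\ref{rem:nonessential}) is exactly the subtlety the paper handles in its $(4)\Rightarrow(5)$ step, where it repairs essentiality by passing to an infinite direct sum; your way of avoiding this---starting from an essential representation of $B$ in the first place---is cleaner.
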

\begin{proof}
We show (1) $\Rightarrow$ (2) $\Rightarrow$ (3) $\Rightarrow$ (4) $\Rightarrow$ (5) $\Rightarrow$ (6) $\Rightarrow$ (7) $\Rightarrow$ (1) 
and (7) $\Leftrightarrow$ (8) $\Leftrightarrow$ (9).

(1) $\Rightarrow$ (2): Since $\pi(\cA)$ has a F\o lner net, then its norm closure $\overline{\pi(\cA)}$ has the same 
F\o lner net and by Theorem~1.1 in \cite{Bedos95} we conclude that $\overline{\pi(\cA)}$ (hence also $\pi(\cA)$) has an
amenable trace.

(2) $\Rightarrow$ (3): Let $\rho\colon  \cA\rightarrow \cL(\cK)$ be a nonzero positive $*$-representation such that there exists a hypertrace
$\psi$ for $\rho(\cA)$ and, therefore, also for $\overline{\rho(\cA)}$. Moreover, let
$\pi_m\colon \cA\rightarrow \cL(\cH_m)$ be a maximal $*$-representation (cf., Definition~\ref{def:maximal}) and note that, by definition, we have
\[
 \|\rho(A)\|\leq \|\pi_m(A)\|\;,\quad \text{for all } A\in\cA\;.
\]
Therefore, the $*$-homomorphism $\Phi\colon \pi_m(\cA)\rightarrow \rho(\cA)$ given
by $\Phi(\pi_m(A))=\rho(A)$ can be extended to a $*$-homomorphism on the corresponding
closures $\Phi\colon \overline{\pi_m(\cA)}\rightarrow\overline{\rho(\cA)}$.
By Arveson's extension theorem, there is a u.c.p. map $\widehat{\Phi}\colon \cL(\cH_m) \rightarrow \cL(\cK)$ which
extends $\Phi$. It remains to show the state $\phi:=\psi\circ \widehat{\Phi}$ is a hypertrace for $\overline{\pi_m(\cA)}$.
Since the $C^*$-algebra $\overline{\pi_m(\cA)}$ is contained in the multiplicative domain of $\widehat{\Phi}$, we have
for each $A\in \overline{\pi_m(\cA)}$, $T\in \cL(\cH_m)$ that
\begin{align*}
\phi(AT)=\psi\left(\Phi(A)\widehat{\Phi}(T)\right)=\psi\left(\widehat{\Phi}(T)\Phi(A)\right)=\phi(TA)\;.
\end{align*}
This shows that $\phi$ is a hypertrace for $\overline{\pi_m(\cA)}$.

(3) $\Rightarrow$ (4): Let $\pi_{m_0}\colon \cA \rightarrow \cL(\cH_{m_0})$ be a maximal $*$-representation such that
$\pi_{m_0}(\cA)$ has an amenable trace. Consider the infinite sum of representations $\pi_{m_0}$ which gives a 
nonzero positive essential $*$-representation
\begin{align*}
\pi_m:=\mathop{\oplus}^\infty \pi_{m_0}\colon \cA\rightarrow \cL\Big(\mathop{\oplus}^\infty \cH_{m_0}\Big)\;,
\end{align*}
which, by construction, is also maximal. According to Definition~\ref{def:Foelner2}~(iv) we have to show
that for any finite set $\cF\subseteq \cA$, any finite rank orthogonal projection $Q\in \cL\left(\mathop{\oplus}\limits^\infty  \cH_{m_0}\right)$, 
and any $\varepsilon>0$, there exists a nonzero finite rank orthogonal projection $P\in \cL\left(\mathop{\oplus}\limits^\infty  \cH_{m_0}\right)$
such that $P\geq Q$ and
\begin{align*}
\frac{\|\pi_m(A)P-P\pi_m(A)\|_2}{\|P\|_2}\leq \varepsilon\;,\quad \text{for all } A\in \cF\;.
\end{align*}
Since $\overline{\pi_{m_0}(\cA)}$ has an amenable trace, then by Theorem~1.1 in \cite{Bedos95} 
there exists an $(\cF,\varepsilon/2)$-F\o lner projection $P_0\in\cL(\cH_{m_0})$. Choose $N\in \N$ such that 
\[ 
  N>\frac{4\|Q\|_2\cdot\|\pi_m(A)\|}{\varepsilon\cdot \|P_0\|_2} \;,\quad \text{for all } A\in \cF\;,
\]
and define the nonzero finite rank orthogonal projection 
\[ 
P:=Q\vee \left(\mathop{\oplus}^N P_0\right) \in \cL\Big(\mathop{\oplus}^\infty  \cH_{m_0} \Big) 
\]
which satisfies $P \geq Q$. Moreover, for all $A\in \cF$ we have
\begin{align*}
\frac{\|\pi_m(A)P-P\pi_m(A)\|_2}{\|P\|_2}\leq \frac{2\|Q\|_2\cdot\|\pi_m(A)\|}{N\cdot
\|P_0\|_2}+\frac{N\cdot\|\pi_{m_0}(A)P_0-P_0\pi_{m_0}(A)\|_2}{N\cdot \|P_0\|_2}\leq \varepsilon\;.
\end{align*}

(4) $\Rightarrow$ (5) Let $\pi_m\colon \cA\rightarrow\cL(\cH_m)$ be a maximal and essential $*$-representation
such that $\pi_m(\cA)$ has a proper F\o lner net. Then, $\widehat{\pi_m}$ is faithful by 
Lemma~\ref{lem:max-faithful} and since $\pi_m(\cA)=\widehat{\pi_m}(\iota(\cA))$ it follows that
$\widehat{\pi_m}(C^*_\mathrm{univ}(\cA))$ has the same proper F\o lner net. As mentioned in Remark~\ref{rem:nonessential},
$\widehat{\pi_m}$ need not be essential. But considering again the infinite direct sum of $\widehat{\pi_m}$
and adapting the F\o lner sequence one obtains the claim.

(5) $\Rightarrow$ (6) Let $\widehat{\pi_0} \colon C^*_\mathrm{univ}(\cA)\rightarrow\cL(\cH_0)$ be a faithful and essential 
$*$-representation such that $\widehat{\pi_0}(C^*_\mathrm{univ}(\cA))$ has a proper F\o lner net. For simplicity we will identify 
$C^*_\mathrm{univ}(\cA)$ with $\widehat{\pi_0}(C^*_\mathrm{univ}(\cA))\subset\cL(\cH_0)$. Consider now an arbitrary faithful and essential
$*$-representation $\widehat{\pi} \colon C^*_\mathrm{univ}(\cA)\rightarrow\cL(\cH)$. By Voiculescu's theorem (which is also true in the non-separable
context, cf., \cite[\S 1.7]{bBrown08}, \cite[Theorem~3.14]{Hadwin81}), the identity $*$-representation of $C^*_\mathrm{univ}(\cA)$ and $\widehat{\pi}$ are 
approximately unitarily equivalent. Therefore, for any $\varepsilon$ there is a unitary $U\colon \cH\to\cH_0$ such that
\[
 \|B-U\widehat{\pi}(B)U^{-1}\|<\frac{\varepsilon}{3}\;,\quad \text{for all } B\in C^*_\mathrm{univ}(\cA)\;.
\]

Fix $\varepsilon>0$, a finite set $\cF\subset C^*_\mathrm{univ}(\cA)$ and a finite rank orthogonal projection $R$ on $\cH$.
Then, since $C^*_\mathrm{univ}(\cA)(\equiv\widehat{\pi_0}(C^*_\mathrm{univ}(\cA)))$ has a proper F\o lner net, there is a
$(\cF,\frac{\varepsilon}{3})$-F\o lner projection $P_0$ on $\cH_0$ satisfying $P\geq U R U^{-1}$ (hence
$U^{-1}P_0U \geq R$). It remains to show that $Q :=U^{-1}P_0U $ is an $(\widehat{\pi}(\cF),\varepsilon)$-F\o lner 
projection on $\cH$. Now for any $B\in\cF$ we have
\begin{eqnarray*}
 \|\widehat{\pi}(B)\,Q-Q\,\widehat{\pi}(B)\|_2 & = & \| U\,\widehat{\pi}(B)\,U^{-1} \;P_0-P_0 \;U\, \widehat{\pi}(B)\,U^{-1} \|_2  \\[2mm]
                       &\leq & 2 \, \|U\widehat{\pi}(B)U^{-1}-B\|\, \|P_0\|_2 + \|B\,P_0-P_0\,B\|_2 \\[2mm]
                       &\leq & \varepsilon \, \|P_0\|_2           \;,
\end{eqnarray*}
which shows our claim.

(6) $\Rightarrow$ (7) Let $\widehat{\pi} \colon C^*_\mathrm{univ}(\cA)\rightarrow\cL(\cH)$ be a faithful and essential 
$*$-representation such that $\widehat{\pi}(C^*_\mathrm{univ}(\cA))$ (hence also $\pi(\cA)$ has a proper F\o lner net $\{P_i\}_i$). 
Take the triples $\{(\cH, \pi, P_i)\}_{i\in I}$ which, by definition, satisfy the asymptotic relation in Eq.~(\ref{eq:assymp}).

(7) $\Rightarrow$ (1): Let $\{(\cH_i, \pi_i, P_i)\}_{i\in I}$ be the net of triples in condition (7). Consider the following nonzero positive
$*$-representation $\pi:=\oplus_i \pi_i\colon  \cA\rightarrow \cL(\oplus_i \cH_i)$ and nonzero finite rank orthogonal projections 
$Q_i:=(0,\ldots,0,P_i,0,0,\ldots) \in \cL(\oplus_i \cH_i)$. It is clear that $\{Q_i\}_{i\in I}$ is a F\o lner net for $\pi(\cA)$.

(8) $\Rightarrow$ (7): Assume that $\cA$ is a F\o lner semi-pre-$C^*$-algebra and denote by
$\varphi_i\colon\cA\to M_{k(i)}(\C)$ the u.c.p. maps satisfying
\[ 
  \lim_i\|\varphi_i(AB)-\varphi_i(A)\varphi_i(B)\|_{2,\mathrm{tr}}=0\;,\quad \text{for all } A,B\in\cA\;.
\]
From Stinespring dilation theorem for semi-pre-$C^*$-algebras (cf., Theorem~\ref{thm:pre-stine})
there exist Hilbert spaces $\cH_i$, nonzero (unital) positive $*$-representations $\pi_i\colon \cA\rightarrow \cL(\cH_i)$ and isometries $V_i\in
\cL(\C^{k(i)},\cH_i)$ such that $\varphi_i(A)=V_i^*\pi_i(A)V_i$ for $A\in \cA$. Each range projection $P_i:=V_iV^*_i\in \cL(\cH_i)$ is 
nonzero and has finite rank $k(i)$.

Since each $V_i$ is an isometry we have for any $F\in M_{k(i)}(\C)$, that 
$\|VF\|_{2,\mathrm{tr}}=\|F V^*\|_{2,\mathrm{tr}}=\|F\|_{2,\mathrm{tr}}$, and, therefore,
\begin{align*}
\|\varphi_i(A^*A)-\varphi_i(A^*)\varphi_i(A)\|_{2,\mathrm{tr}}&=\|P_i\pi_i(A^*A)V_i-P_i\pi_i(A^*)P_i\pi_i(A)V_i\|_{2,\mathrm{tr}}\\
&=\frac{\|P_i\pi_i(A^*)(1-P_i)\pi_i(A)P_i\|_2}{\|P_i\|_2}\;.
\end{align*}
The following computation completes the proof the implication:
\begin{align*}
\lefteqn{\frac{\| \pi_i (A) P_i-P_i \pi_i (A) \|_{2}}{\|P_i\|_{2}}} \qquad\qquad  &\\[2mm]
&\leq  \frac{\| (1-P_i)\pi_i(A)P_i \|_{2}}{\|P_i\|_{2}}+\frac{\| P_i\pi_i(A)(1-P_i) \|_{2}}{\|P_i\|_{2}}\\[2mm]
&\leq \left(\frac{\|P_i\pi_i(A^*)(1-P_i)\pi_i(A)P_i\|_2}{\|P_i\|_2}\right)^{1/2}+
      \left(\frac{\|P_i\pi_i(A)(1-P_i)\pi_i(A^*)P_i\|_2}{\|P_i\|_2}\right)^{1/2} \\[2mm]
&=\Big(\|\varphi_i(A^*A)-\varphi_i(A^*)\varphi_i(A)\|_{2,\mathrm{tr}}\Big)^{1/2}+\Big(\|\varphi_i(AA^*)-\varphi_i(A)\varphi_i(A^*)\|_{2,\mathrm{tr}
}\Big)^{1/2} \\[2mm]
&\rightarrow 0\;,
\end{align*}
where the second inequality follows from the Cauchy-Schwarz inequality.

(7) $\Rightarrow$ (8): Consider a net of triples $\{(\cH_i, \pi_i, P_i)\}_{i\in I} $, where each $\pi_i\colon\cA\to\cL(\cH_i)$ is a nonzero
(unital) positive $*$-representations and $P_i \in \cL(\cH_i)$ are nonzero finite rank orthogonal projections, such that 
 \begin{align*}
  \lim_{i} \frac{\| \pi_i (A) P_i-P_i \pi_i (A) \|_{2}}{\|P_i\|_{2} } = 0\;\;,\quad \text{for all } A\in\cA \;,
 \end{align*}
 where $\|P_i\|_{2}$ denotes the Hilbert-Schmidt norm in $\cL(\cH_i)$. Define a net of u.c.p. maps $\varphi_i\colon \cA\rightarrow M_{k(i)}(\C)$ 
 by $\varphi_i(A)=P_i\pi_i(A)P_i$, where $k(i)$ denotes the rank of $P_i$. Moreover, for each $A,B\in \cA$ we have
\begin{align*}
\|\varphi_i(AB)-\varphi_i(A)\varphi_i(B)\|_{2,\mathrm{tr}}&=\|P_i(P_i\pi_i(A)-\pi_i(A)P_i)\pi_i(B)P_i\|_{2,\mathrm{tr}}  &\\[2mm]
&=\frac{\|P_i(\pi_i(A)-\pi_i(A)P_i)\pi_i(B)P_i\|_2}{\|P_i\|_{2}}&\\[2mm]
&\leq \frac{\|P_i\pi_i(A)-\pi_i(A)P_i\|_2}{\|P_i\|_{2}}\|\pi_i(B)\| &\\[2mm]
&\leq \frac{\|P_i\pi_i(A)-\pi_i(A)P_i\|_2}{\|P_i\|_{2}} \|\pi_m(B)\| \\[2mm]
&\rightarrow 0\;,
\end{align*}
where $\pi_m$ is a maximal $*$-representation of $\cA$ which exists by (4).

(8) $\Leftrightarrow$ (9): If $\cA$ is a F\o lner semi-pre-$C^*$-algebra, then, by (5), there is a faithful and essential 
$*$-representation $\widehat{\pi} \colon C^*_\mathrm{univ}(\cA)\rightarrow\cL(\cH)$ such that $\widehat{\pi}(C^*_\mathrm{univ}(\cA))$ has 
a proper F\o lner net and by Theorem~1.1 in \cite{Bedos95} also an amenable trace.
The implication ($\Rightarrow$) follows then by Theorem~6.2.7 in \cite{bBrown08}. The reverse implication 
is obvious.
\end{proof}

\begin{remark}
The preceding result shows, in the spirit of Theorem~4.1 in \cite{AL14}, different perspectives of the representation-independent notion of F\o lner semi-pre-$C^*$-algebra.
Note that if the semi-pre-$C^*$-algebra $\cA$ is a separable $C^*$-algebra, then all maximal (in particular, the universal)
positive $*$-representations $\pi_m$ on $\cA$ are faithful. Hence we recover Theorem~4.3 in \cite{AL14}
as a special case. The proof of the implication (2) $\Rightarrow$ (3) also shows that the notion of amenable trace for 
semi-pre-$C^*$-algebras does not depend on the maximal embedding.
\end{remark}

As a consequence of Theorem~\ref{theorem:characFolner2}~(ii) we have 
the following result in the nonseparable context (cf., \cite[Corollary~4.5]{AL14}).

\begin{corollary}\label{cor:nonzeroquotients} 
Let $\cA$ be a unital $C^*$-algebra. 
If a nonzero quotient of $\cA$ is a F\o lner $C^*$-algebra,
then $\cA$ is a F\o lner $C^*$-algebra. In particular, any $C^*$-algebra
admitting a finite-dimensional representation and the maximal
group $C^*$-algebra of a discrete group are examples of F\o lner $C^*$-algebras.
\end{corollary}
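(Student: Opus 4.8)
The plan is to deduce the corollary directly from the representation-free characterization of F\o lner $C^*$-algebras provided by Theorem~\ref{theorem:characFolner2}. The first point to nail down is that for a unital $C^*$-algebra $\cB$ equipped with its canonical $*$-positive cone one has $C^*_\mathrm{univ}(\cB)=\cB$ (Example~\ref{basis}), so conditions~(8) and~(9) of Theorem~\ref{theorem:characFolner2} both simply read ``$\cB$ is a F\o lner $C^*$-algebra'' in the sense of Definition~\ref{def:FA2}; in particular this is equivalent to condition~(1), the existence of a nonzero unital $*$-representation $\pi\colon\cB\to\cL(\cH)$ for which $\pi(\cB)$ admits a F\o lner net. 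This equivalence is the only genuinely analytic input; everything else is soft, and I do not expect a real obstacle.

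First I would fix a closed two-sided ideal $J\subsetneq\cA$ with $\cA/J$ a F\o lner $C^*$-algebra and let $q\colon\cA\to\cA/J$ denote the quotient $*$-homomorphism. Applying the equivalence (8)$\Leftrightarrow$(1) of Theorem~\ref{theorem:characFolner2} to the (unital) $C^*$-algebra $\cA/J$ yields a nonzero unital $*$-representation $\rho\colon\cA/J\to\cL(\cH)$ such that $\rho(\cA/J)$ has a F\o lner net $\{P_i\}_{i\in I}$. Then $\pi:=\rho\circ q$ is a nonzero unital $*$-representation of $\cA$ with $\pi(\cA)=\rho(\cA/J)$, so the very same net $\{P_i\}_{i\in I}$ is a F\o lner net for $\pi(\cA)$. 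Feeding this back through the implication (1)$\Rightarrow$(8) of Theorem~\ref{theorem:characFolner2} shows that $\cA$ is a F\o lner $C^*$-algebra. The only subtlety worth double-checking is that the cone conventions really do make Theorem~\ref{theorem:characFolner2} applicable to both $\cA$ and $\cA/J$, which is precisely the content of Example~\ref{basis}; in particular $C^*_\mathrm{univ}(\cA/J)=\cA/J\neq 0$, so the nondegeneracy hypothesis of that theorem is met.

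For the two concrete assertions I would first record that every nonzero finite-dimensional $C^*$-algebra $B$ is F\o lner: it admits a faithful unital $*$-homomorphism into some $M_N(\C)$, and a single unital $*$-homomorphism into matrices is a (constant) net of u.c.p.\ maps that is exactly multiplicative, so Definition~\ref{def:FA2} is satisfied. Consequently, if $\cA$ carries a finite-dimensional representation $\sigma\colon\cA\to M_n(\C)$, then $\sigma(\cA)\cong\cA/\ker\sigma$ is a nonzero finite-dimensional, hence F\o lner, quotient of $\cA$, and the first part applies. Likewise, for a discrete group $\Gamma$ the trivial character $\gamma\mapsto 1$ is a one-dimensional unitary representation, which by the universal property of $C^*(\Gamma)$ extends to a unital $*$-homomorphism of $C^*(\Gamma)$ onto $\C$; thus $\C$ is a nonzero (F\o lner) quotient of $C^*(\Gamma)$, and the first part gives that $C^*(\Gamma)$ is a F\o lner $C^*$-algebra.
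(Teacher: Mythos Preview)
Your proof is correct and follows essentially the same approach as the paper, which presents the corollary as an immediate consequence of Theorem~\ref{theorem:characFolner2} (the paper points to condition~(2) on amenable traces, while you use the equivalent condition~(1) on F\o lner nets, but the lifting-through-the-quotient argument is structurally identical). Your treatment is in fact more detailed than the paper's, which simply cites the theorem and \cite[Corollary~4.5]{AL14} without spelling out the composition $\pi=\rho\circ q$ or the verification of the two particular cases.
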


\begin{example}\label{ex:GA}
Semi-pre-$C^*$-algebras provide a useful framework to understand the algebraic aspects of
group $C^*$-algebras. In particular, one can use this approach to distinguish at the algebraic level the maximal and the reduced group $C^*$-algebras. Following \cite[Example~1]{Ozawa13}, 
we will analyze below these algebras as F\o lner semi-pre-$C^*$-algebras.

Let $\Gamma$ be a discrete group and denote by $\cA:=\C \Gamma$ its group algebra over $\C$. 
Then $\cA$ is a semi-pre-$C^*$-algebra with respect to the algebraic $*$-positive cone $\cA_{+, \operatorname{alg}}:=\C \Gamma_+$. 
It is clear from the definitions that $C^*_\mathrm{univ}(\cA)=C^*(\Gamma)$, the maximal group $C^*$-algebra of $\Gamma$. 
Since $C^*(\Gamma)$ has always a character (i.e., a one-dimensional representation) it follows from
Corollary~\ref{cor:nonzeroquotients} that it is a F\o lner $C^*$-algebra. Therefore, 
Theorem~\ref{theorem:characFolner2} implies that $(\cA,\cA_{+, \operatorname{alg}})$ is a F\o lner semi-pre-$C^*$-algebra.

If we equip the group algebra $\cA:=\C \Gamma$ with the following larger archi\-me\-dean closed $*$-positive cone
\begin{align*}
\cA_{+, \operatorname{r}}=\{ f\in\cA  : \text{there exists a net}\; \{f_n\}_n\subset\cA_{+, \operatorname{alg}}\;\text{such that}\; f_n\to f\;\text{ pointwise } \}\;,
\end{align*}
then $(\cA,\cA_{+, \operatorname{r}})$ is a semi-pre-$C^*$-algebra and $C^*_\mathrm{univ}(\cA)=C_r^*(\Gamma)$, the reduced group $C^*$-algebra of $\Gamma$. 
Since $C_r^*(\Gamma)$ is a F\o lner $C^*$-algebra if and only if $\Gamma$ is amenable we conclude that $(\cA,\cA_{+, \operatorname{r}})$ is a 
F\o lner semi-pre-$C^*$-algebra if and only if $\Gamma$ is amenable and this is the case if and only if $\cA_{+, \operatorname{r}}=\text{arch}(\cA_{+, \operatorname{alg}})$ 
(see \cite[Example~1]{Ozawa13}). 
Finally, we note that it may happen that the algebraic $*$-positive cone $\C \Gamma_+$ is already archi\-me\-dean closed, which is the case for 
$\Gamma=\mathbb{F}_d$ (\cite{Netzer-Thom-13}) and $\Gamma=\Z^2$ (\cite{Scheiderer06}).
\end{example}

\begin{example}
One of the reasons for working in the context of semi-pre-$C^*$-algebras, as opposed to more general $*$-algebras, is the assumption
of Combes axiom in Definition~\ref{def:semi-pre}, which implies that
the u.c.p. maps are completely bounded. This is an indispensable requirement for Theorem~\ref{theorem:characFolner2}
above. The following example illustrates that, in particular, the implication
(8) $\Rightarrow$ (1) is not true in a more general setting: consider the locally compact Hausdorff 
space $X= [0,\infty)$ and for $n\in\N_0$ define $\cA_n=C(X, M_{2^n})$, the unital $*$-algebra of continuous functions on $X$ with values in the $*$-algebra of complex $2^n \times 2^n$-matrices. Let $\gamma_n \colon \cA_n \to \cA_{n+1}$ be the map defined by the following block diagonal matrix
\[
\gamma_n (f)= 
\begin{pmatrix}
 f & 0 \\
 0 & f(n+1)
\end{pmatrix}
\;,
\] 
where $f(n+1)$ is identified with the constant function on $X$ taking the value $f(n+1)$. 
Consider the algebraic inductive limit $\cA= \lim_n  \cA_n$, which is a unital $*$-algebra. 
We show next that there is a sequence $\varphi_n \colon \cA\to M_{2^n}$ 
of asymptotically multiplicative u.c.p. maps.
Given $n$, and any $m\ge n$, denote by $\psi_{m,n}$ the u.c.p. map $M_{2^m}\to M_{2^n}$ obtained by compressing a 
$2^m\times 2^m$ matrix into its $2^n\times 2^n$ upper left corner. Consider the map 
$\varphi _{m,n}\colon \cA_m \to M_{2^n}$ given by 
$$\varphi_{m,n}(f) = \psi_{m,n} (f(0)).$$
The family of u.c.p. maps $\{\varphi_{m,n}: m\ge n \}$ is compatible with the transition homomorphisms $\gamma _i$ in the sense that $\varphi_{m,n} = \varphi_{m+1,n} \circ \gamma_m$, so it induces a u.c.p. maps
$\varphi_n\colon \cA\to M_{2^n}$ which, by construction, are asymptotically multiplicative.

Now we show that that there is no nonzero unital $*$-homomorphism from $\cA$ to any $C^*$-algebra. Suppose that $\pi \colon \cA \to \cB$ is such a $*$-homomorphism. Consider the element $X$ in $\cA$ obtained as the image in the 
(algebraic) direct limit of the function $f\in \cA_0$ given by $f(t) = t$. 
We will show that $\| \pi (X) \| \ge n $ for all $n\ge 0 $. Let $\pi_n \colon \cA_n \to \cB$ be the $*$-homomorphism
obtained by $\pi_n = \pi \circ \gamma_{n,\infty}$, where $\gamma _{n,\infty}\colon \cA_n\to \cA$ is the natural map into the direct limit. 
Then by the simplicity of the matrix algebra $M_{2^n}$, considered as the unital $*$-subalgebra of $\cA_n$ consisting of constant functions on $X$, we know that $\pi_n (M_{2^n})$ is a unital $*$-subalgebra of $\cB$ isomorphic to $M_{2^n}$. Standard matrix manipulations then show that $\cB \cong \cB_0 \otimes_{\mathbb{C}} M_{2^n}$ where $\cB_0$ is the commutant of $\pi_n (M_{2^n})$ inside $\cB$, and moreover, $\pi_n$ can be decomposed as
\[
	\delta_n \otimes \text{id}\colon C(X)\otimes_{\mathbb{C}} M_{2^n}\to \cB_0\otimes_{\mathbb{C}} M_{2^n}
\]
for some unital $*$-homomorphism $\delta_n \colon C(X)\to \cB_0$. But now observe that $X_n = \gamma _{n-1}\circ \cdots \circ \gamma _0 (f)$ is a diagonal matrix with one diagonal entry equal to $f(n) = n \cdot \1_X$. Consequently 
  $$\| \pi (X) \| = \| \pi (\gamma_{n,\infty}(X_n))\| = \| \pi_n(X_n) \| \ge \| \delta_n (n \cdot \1_X)\| = \| n \cdot \1_{\cB_0}\| = n \, ,$$
showing our claim.
\end{example}

%%%%%%%%%%%%%%%%%%%%%%%%%%%%%%%%%%%%%%%%%%%%%%%%%%%%%%%%%%%%%%%%%%%%%%%%%%%%%%%
\subsection{Properly F\o lner pre-$C^*$-algebras and the relation to algebraic amenability}\label{subsec:prop-F-algebras}
%%%%%%%%%%%%%%%%%%%%%%%%%%%%%%%%%%%%%%%%%%%%%%%

In the present subsection we will introduce the notion of properly F\o lner pre-$C^*$-algebra\footnote{Note that the 
name proper F\o lner $C^*$-algebra was used in a different sense in \cite{AL14}!} and we will make contact with the
notion of (proper) algebraic amenability introduced in Subsection~\ref{subsec:alg-amenable3}.

\begin{definition}\label{def:properly-Foelner}
 A (unital) pre-$C^*$-algebra $\cA$ is \emph{properly F\o lner} if for any finite subset $\cF \subset \cA$ and any  
 $\varepsilon > 0$, there exist a positive and cyclic representation $\pi \colon \cA \to \cL(\cH)$ and an 
 $(\pi(\cF), \varepsilon)$-F\o lner projection $P$ in the sense of Definition~\ref{def:Foelner2}~(ii) such that 
 the linear map 
 \[ 
    \langle\cF\rangle\ni A \mapsto P \pi(A) \Omega \in \cH
 \] 
 is injective, where $\Omega$ is a cyclic vector for $\pi$ and $\langle\cF\rangle:=\mathrm{span}\cF$.
\end{definition}

\begin{remark}\label{rem:properly-Foelner-definition}
\begin{enumerate}
 \item Applying Theorem~\ref{theorem:characFolner2}(7), we see that a properly F\o lner pre-$C^*$-algebra must also be F\o lner. 
 We will show later that, contrary to the notion of {\it proper F\o lner} $C^*$-algebra introduced in \cite{AL14}, the notion of 
 properly F\o lner $C^*$-algebra introduced here is strictly stronger than the notion of F\o lner $C^*$-algebra 
 (see \cite[Definition 4.1(ii), Proposition 4.2]{AL14}). 
 \item  It is clear that a finite-dimensional $C^*$-algebra is always properly F\o lner, 
 since it carries a faithful cyclic representation on a finite-dimensional Hilbert space. 
\end{enumerate}
 \end{remark}

 The following result shows a crucial difference between properly F\o lner and F\o lner pre-$C^*$-algebras.
 
 \begin{theorem}\label{prop:properly-Foelner-direct-sum-B}
 Let $\cA$ and $\cB$ be unital pre-$C^*$-algebras. If $\cA \oplus \cB$ is properly F\o lner and $\cB$ is finite-dimensional, then
 $\cA$ is properly F\o lner. 
\end{theorem}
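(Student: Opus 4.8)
The plan is to feed $\cF$ and $\varepsilon$ into the properly F\o lner property of $\cA\oplus\cB$ after enlarging the test set, and then to surgically remove the $\cB$-part of the resulting data. First I would dispose of the trivial cases: if $\cB=\{0\}$ there is nothing to prove, and if $\cA$ is finite-dimensional it is a finite-dimensional $C^*$-algebra, hence properly F\o lner by Remark~\ref{rem:properly-Foelner-definition}(2). So assume $\dim_\C\cB=:m<\infty$ and $\dim_\C\cA=\infty$, and fix a finite $\cF\subset\cA$ with $1_\cA\in\cF$ together with an $\varepsilon>0$; put $d:=\dim\langle\cF\rangle$ and $M_\cF:=\max_{a\in\cF}\|a\|_{\cA}$. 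I would then apply the properly F\o lner property of $\cA\oplus\cB$ to $\varepsilon':=\varepsilon/4$ and to the finite set $\tilde\cF:=\{(a,0):a\in\cF\}\cup\{(a_i,0):1\le i\le N\}$, where $a_1,\dots,a_N\in\cA$ are linearly independent and $N$ is a constant depending only on $m,d,M_\cF,\varepsilon$ fixed at the end. This produces a positive cyclic representation $\rho\colon\cA\oplus\cB\to\cL(\cH)$ with cyclic vector $\tilde\Omega$ and an $(\rho(\tilde\cF),\varepsilon')$-F\o lner projection $\tilde P$ (Definition~\ref{def:Foelner2}(ii)) for which $X\mapsto\tilde P\,\rho(X)\tilde\Omega$ is injective on $\langle\tilde\cF\rangle$; injectivity forces $k:=\operatorname{rank}\tilde P\ge\dim\langle\tilde\cF\rangle\ge N$.

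Next I would split off the $\cB$-part. Let $e:=\rho(1_\cA,0)$ and $f:=\rho(0,1_\cB)$ be the complementary central projections, $\cH_\cA:=e\cH$, $\cH_\cB:=f\cH$, and let $\pi:=\rho(\,\cdot\,,0)|_{\cH_\cA}$, a positive unital $*$-representation of $\cA$ with cyclic vector $\Omega_\cA:=e\tilde\Omega$ (cyclicity passes from $\rho$ to $\pi$ because $e\,\rho(x)\tilde\Omega=\rho\bigl((1_\cA,0)x\bigr)\tilde\Omega$). The key observation is that cyclicity of $\rho$ forces $\dim\cH_\cB\le m$: the set $f\,\rho(\cA\oplus\cB)\tilde\Omega=\{\rho(0,b)\tilde\Omega:b\in\cB\}$ is dense in $\cH_\cB$, yet it is the linear image of the $m$-dimensional space $\cB$ and hence already closed. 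Consequently, letting $Q$ be the orthogonal projection onto $\tilde P\cH\cap\cH_\cA$, we get $Q\le\tilde P$, $\operatorname{rank}Q\ge k-m$, and $Q':=\tilde P-Q$ has $\operatorname{rank}Q'\le m$. Finally set $P:=Q\vee R$, where $R$ is the orthogonal projection onto $V:=\pi(\langle\cF\rangle)\Omega_\cA$; here $\dim V=d$, because injectivity of $X\mapsto\tilde P\,\rho(X)\tilde\Omega$ restricted to $\{(v,0):v\in\langle\cF\rangle\}$ shows $v\mapsto\pi(v)\Omega_\cA$ is injective on $\langle\cF\rangle$. Then $P$ is a nonzero finite-rank projection with $P\le e$ and $\operatorname{rank}P\ge k-m$.

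It remains to check that $\pi,\cH_\cA,P,\Omega_\cA$ witness the properly F\o lner property of $\cA$. Since $P\ge R$ we have $P\pi(v)\Omega_\cA=\pi(v)\Omega_\cA$ for $v\in\langle\cF\rangle$, so $v\mapsto P\pi(v)\Omega_\cA$ inherits injectivity on $\langle\cF\rangle$. For the F\o lner estimate, write $P=\tilde P-Q'+(P-Q)$ and use $\|[\tilde P,\rho(a,0)]\|_2\le\varepsilon'\sqrt{k}$ for $a\in\cF$, $\|Q'\|_2\le\sqrt m$, $\|P-Q\|_2\le\sqrt d$, $\|\rho(a,0)\|\le\|a\|_{\cA}$ (positive $*$-representations extend to the $C^*$-completion and are contractive), together with the fact that $P\le e$ makes $[P,\rho(a,0)]$ equal to $[P,\pi(a)]$ on $\cH_\cA$ and vanish on $\cH_\cB$; this yields, for $a\in\cF$,
\begin{equation*}
\frac{\|[P,\pi(a)]\|_2}{\|P\|_2}\;\le\;\frac{\varepsilon'\sqrt{k}+2(\sqrt m+\sqrt d)\,M_\cF}{\sqrt{k-m}}\,.
\end{equation*}
Taking $N:=\max\bigl(2m,\ \lceil 32(\sqrt m+\sqrt d)^2M_\cF^2/\varepsilon^2\rceil\bigr)$ ensures $k-m\ge k/2$ and bounds the right-hand side by $\tfrac{\sqrt2}{4}\varepsilon+\tfrac12\varepsilon<\varepsilon$ for every $k\ge N$, so $P$ is an $(\pi(\cF),\varepsilon)$-F\o lner projection and $\cA$ is properly F\o lner.

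The heart of the matter — and the only place finite-dimensionality of $\cB$ is genuinely needed — is controlling how much of the F\o lner projection $\tilde P$ can escape into the $\cB$-summand: a priori $\tilde P$ could live almost entirely in the representation of $\cB$, making its compression to $\cH_\cA$ worthless. The two ingredients that defeat this are (i) cyclicity of $\rho$, which caps $\dim\cH_\cB$ by $\dim\cB$ regardless of $\operatorname{rank}\tilde P$, and (ii) the freedom in Definition~\ref{def:properly-Foelner} to enlarge the test set, which forces $\operatorname{rank}\tilde P$ to dwarf that cap; once both are in hand, excising the bounded-rank piece $Q'$ and adjoining the bounded-rank piece $R$ (needed to preserve injectivity) perturbs the F\o lner ratio only negligibly.
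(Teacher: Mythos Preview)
Your proof is correct and follows essentially the same strategy as the paper's: enlarge the test set to force the F\o lner projection to have large rank, use cyclicity of the representation to bound the dimension of the $\cB$-summand of the Hilbert space by $m=\dim\cB$, and then excise this bounded-rank piece. The implementation differs only in minor technicalities --- the paper forms $P_0=P\vee R_1-R_1$ and handles injectivity via an orthogonalization trick, whereas you take the intersection $Q=\tilde P\wedge e$ and adjoin the projection $R$ onto $\pi(\langle\cF\rangle)\Omega_\cA$ to restore injectivity directly --- but the governing ideas are identical.
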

 \begin{proof}
Assume that $\cA \oplus \cB$ is properly F\o lner and let $\{B_1 , \ldots, B_m\}$ be a linear basis of $\cB$.
It suffices to give a proof in the case when $\cA$ is infinite-dimensional, since otherwise the conclusion is obvious by 
Remark~\ref{rem:properly-Foelner-definition}.

Let $\cF_0 \subset \cA $ and $\varepsilon > 0$ be given.  
To prove our assertion we need to find a cyclic representation $\pi_0\colon\cA\to\cL(\cH_0)$ with cyclic vector $\Omega_0$ and satisfying
the conditions of Definition~\ref{def:properly-Foelner}. Consider the finite set
$\cF = \cF_0 \sqcup \{B_1 , \ldots, B_m\} \subset \cA \oplus \cB$ (identifying $\cA$ with $\cA\oplus \{0\}$ and $\cB$ with 
$\{0\}\oplus\cB$). 
From Definition~\ref{def:properly-Foelner} applied to $\cA \oplus \cB$, there is a positive and cyclic representation 
$\pi \colon \cA \oplus \cB  \to \cL(\cH)$ with cyclic vector $\Omega$
and a $\left(\pi(\cF), \frac{\varepsilon}{2}\right)$-F\o lner projection $P$ in the sense of Definition~\ref{def:Foelner2}~(ii) such that 
the linear map $\mathrm{span}(\cF) \to \cH$ given by $C \mapsto P \pi (C) \Omega$ is injective.
The latter condition implies also that $\|P\|_1=\dim(P\cH)\geq\dim\mathrm{span}(\cF)$.
Note, in addition, that $P$ will also implement proper F\o lnerness for any smaller set
$\cF'\subset\cF$ and that the linear map $\pi(\cA \oplus \cB) \Omega \to P \cH$ given by $C \mapsto P \pi (C) \Omega$ has dense image and is thus surjective. Therefore, by adding more elements of small norms to $\cF$, without loss of generality, we may assume that $\|P\|_1 = \dim\mathrm{span}(\cF)$ and
 \[
  \frac{\varepsilon}{2}\; \frac{ \mathrm{dim}( \mathrm{span}(\cF) ) }{  \mathrm{dim}( \mathrm{span}(\cF) ) - m  } 
                      + \frac{4 m \max_{T \in \cF_0} \| T \| }{   \mathrm{dim}( \mathrm{span}(\cF) ) - m  } \leq \varepsilon \; .
 \]
For this estimate we need that $\dim\cB=m$ is finite. 
Next we show that the F\o lner projection $P$ is approximately well-adapted to the orthogonal structure induced on $\cH$ by the 
direct sum $\cA\oplus\cB$. We have $\cH=\cK_0\oplus\cK_1$, where
$\cK_0:=\overline{\pi(\cA\oplus \{0\})\Omega}$ and $\cK_1:=\overline{\pi(\{0\}\oplus\cB)\Omega}$. Moreover, $\mathrm{dim}\,\cK_1=m$ because the map
$C\mapsto \pi (C) \Omega $ is injective when restricted to $\mathcal B$.  
We denote by $R_0$ and $R_1$ the orthogonal projections onto $\cK_0$ and $\cK_1$, respectively.
It is clear that for any $A\in\cA$, $B\in\cB$, we have $R_0\pi(A\oplus B)R_0=\pi(A\oplus B)R_0$, hence $R_0$ reduces $\pi$ and we 
define $\pi_0:=\pi|_{\cK_0}\colon\cA\to\cL(\cK_0)$ which by construction is a cyclic representation with cyclic vector $\Omega_0:=\pi(\1\oplus 0)\Omega$.
Define the projections
\begin{equation}\label{eq:PQ}
 P_0:= P\vee R_1 -R_1\le R_0 \quad\mathrm{and}\quad Q:=P\vee R_1 -P \;.
\end{equation}
Then we have 
\[
\| P_0 \|_1 =   \| P\vee R_1 - R_1 \|_1 = \|  P- P\wedge R_1 \|_1 \ge \| P\|_1 -m   \ge \mathrm{dim}( \mathrm{span}(\cF) ) - m 
\]
and 
\[
\| Q \|_1 =   \| P\vee R_1 - P \|_1 = \|  R_1- P\wedge R_1 \|_1 \le m.   
\]
We now show that $P_0$ is a $(\pi(\cF_0),\varepsilon)$-F\o lner projection.
For any $T \in \cF_0$, we have, using Eq.~(\ref{eq:PQ}) as well as 
the trace-class norm to compute the size of the finite rank operators (see Remark~\ref{rem:trace-class}~(ii)),
 \begin{eqnarray*}
 \lefteqn{
  \frac{\| P_0\pi_0(T)  - \pi_0(T)P_0\|_1}{\|P_0\|_1}
  } &&\\
    &\leq & \ \frac{\| (P\vee R_1)  \pi(T\oplus 0) - \pi(T\oplus 0) (P\vee R_1)  \|_1}{\|P_0\|_1} 
                                              + \frac{\| R_1 \pi(T\oplus 0) - \pi(T\oplus 0)  R_1  \|_1}{\|P_0\|_1} \\
    &\leq & \ \frac{\| P  \pi(T\oplus 0) - \pi(T\oplus 0) P  \|_1}{\|P_0\|_1} 
                                              + \frac{\| Q \pi(T\oplus 0) - \pi(T\oplus 0)  Q  \|_1}{\|P_0\|_1} + 2\max_{T\in\cF_0}\|T\|\frac{m}{ \|P_0\|_1 }  \\
    &\leq & \ \frac{\varepsilon}{2} \frac{\|P\|_1 }{\|P_0\|_1} + 4\max_{T\in\cF_0}\|T\|\frac{m }{\|P_0\|_1} \\
    &\leq & \ \frac{\varepsilon}{2}\; 
                                                \frac{ \mathrm{dim}( \mathrm{span}(\cF) ) }{  \mathrm{dim}( \mathrm{span}(\cF) ) - m  } 
                                              + \frac{4 m \max_{T \in \cF_0} \| T \| }{   \mathrm{dim}( \mathrm{span}(\cF) ) - m  } \\
   &\leq &\ \varepsilon\;.
\end{eqnarray*}
Finally, we have that the map $\mathrm{span}(\cF _0)\to \mathcal K_0$ 
\[
 A\mapsto P_0\pi_0(A)\Omega_0=(P\vee R_1-R_1)R_0\pi(A\oplus 0)\Omega=(P\vee R_1)\pi(A\oplus 0)\Omega
\]
is injective. In summary, we have shown by verifying the conditions of Definition~\ref{def:properly-Foelner} that $\cA$ is properly F\o lner.
\end{proof}

\begin{remark}
 \label{rem:directsumOK}
 It is easy to show that $\mathcal A \oplus \mathcal B$ is properly F\o lner if $\mathcal A$ and $\mathcal B$ are. We will not need this result in the present paper. 
 \end{remark}

As an application of Theorem~\ref{theorem:characFolner2} we conclude by relating the concept of (proper) 
algebraic amenability given in Subsection~\ref{subsec:alg-amenable3} to the (proper) F\o lner property for pre-$C^*$-algebras.
Parts of the proof of the following result are inspired by that of \cite[Theorem 5.1]{Cec-Sam-08}.\footnote{In the proof 
of \cite[Theorem 5.1]{Cec-Sam-08}, the authors seem to use the non-stated condition that (in their notation) $\xi$ is a 
cyclic and {\it separating} vector for the action of $\cA$.} Nevertheless, in the following proof we exploit the
fact that the notion of F\o lner semi-pre-$C^*$-algebra presented in this article is intrinsic, i.e., representation-independent.

\begin{theorem}\label{theorem:alg-amen-to-Folner}
 Let $\cA$ be a (nonzero) unital pre-$C^*$-algebra, and $\cB$ be a dense subalgebra of $\cA$.
 \begin{enumerate}
  \item[(i)]  If $\cB$ is (left) algebraically amenable, then $\ol{\cA}$ is a F\o lner $C^*$-algebra. In particular, $\cA$ is a F\o lner pre-$C^*$-algebra.  
  \item[(ii)] If $\cB$ is (left) properly algebraically amenable, then $\ol{\cA}$ is a properly F\o lner $C^*$-algebra. In particular, $\cA$ is a properly F\o lner pre-$C^*$-algebra. 
  \end{enumerate}
  \end{theorem}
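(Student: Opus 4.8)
The overall strategy is to turn an algebraic F\o lner subspace $W\subset\cB$ into a triple $(\cH,\pi,P)$, where $\pi$ is a nonzero positive $*$-representation and $P$ an $(\pi(\cF),\varepsilon)$-F\o lner projection, and then to invoke Theorem~\ref{theorem:characFolner2}(7) for part~(i) and to verify Definition~\ref{def:properly-Foelner} directly for part~(ii). I would first record that we may assume $\cA$, hence $\cB$, is infinite-dimensional (the finite-dimensional case being trivial by Remark~\ref{rem:properly-Foelner-definition}(2)), and, replacing $\cF$ by $\cF\cup\cF^*$, that $\cF=\cF^*$. Given a finite set $\cF\subset\ol\cA$ and $\varepsilon>0$, fix a small $\delta>0$ and, by density of $\cB$, a self-adjoint finite set $\cF'\subset\cB$ with a choice $a\mapsto b_a\in\cF'$ such that $\|a-b_a\|<\delta$ and $b_{a^*}=b_a^*$; in case~(ii) also fix a large integer $N$. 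Applying (proper) algebraic amenability of $\cB$ to $\cF'$ with a small parameter $\varepsilon'>0$ --- for the proper case after padding $\cF'$ with extra linearly independent elements of $\cB$ so that Definition~\ref{def:alg-amenable2}(iii) forces $\dim W\ge N$ --- I would obtain a nonzero finite-dimensional $W\subset\cB$ with $\dim(bW+W)\le(1+\varepsilon')\dim W$ for all $b\in\cF'$; set $n:=\dim W$ and put $V:=W$ in case~(i) and $V:=W+\langle\cF\rangle$ in case~(ii), so $V\subset\ol\cA$ is finite-dimensional with $\dim V-n\le|\cF|$.

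Next I would build the representation. Fix a faithful representation $\sigma_0$ of $\ol\cA$ and let $\sigma$ be its amplification with infinite multiplicity. A routine finite-dimensional argument --- successively adjoining to a vector a new component annihilated by no nonzero element of the relevant span, which terminates since $V$ is finite-dimensional and $\sigma_0$ is faithful --- produces a vector $\xi$ with $v\mapsto\sigma(v)\xi$ injective on $V$. Let $\pi$ be the restriction of $\sigma$ to the invariant subspace $\cH:=\overline{\sigma(\ol\cA)\xi}$, so that $\pi$ is a nonzero positive cyclic representation with cyclic vector $\Omega:=\xi$ (restricted to $\cA$ where needed), and $A\mapsto\pi(A)\xi$ is injective on $V$. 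Let $P'$ and $P$ be the orthogonal projections onto $\pi(W)\xi$ and $\pi(V)\xi$ respectively; by injectivity $\operatorname{rank}P'=n$, $\operatorname{rank}P=\dim V$, and $\operatorname{rank}(P-P')\le|\cF|$.

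For the commutator estimate, fix $b\in\cF'$. The range of $\pi(b)P'$ lies in $\pi(bW+W)\xi$, a space of dimension at most $(1+\varepsilon')n$, so in $\pi(b)P'-P'\pi(b)=(1-P')\pi(b)P'-P'\pi(b)(1-P')$ I would replace $1-P'$ in the first corner by $(\text{projection onto }\pi(bW+W)\xi)-P'$, treat the second corner by taking adjoints (using $b^*\in\cF'$), and apply $\|XY\|_2\le\|X\|_2\|Y\|$ to get $\|\pi(b)P'-P'\pi(b)\|_2\le2\|b\|\sqrt{\varepsilon' n}$. Passing from $b_a$ to $a\in\cF$ costs an extra $2\delta\sqrt n$, and passing from $P'$ to $P$ costs an extra $2\|a\|\sqrt{|\cF|}$; dividing by $\|P\|_2=\sqrt{\dim V}\ge\sqrt n$ yields
\[
\frac{\|\pi(a)P-P\pi(a)\|_2}{\|P\|_2}\ \le\ 2\|b_a\|\sqrt{\varepsilon'}+2\delta+2\|a\|\sqrt{|\cF|/n}\qquad(a\in\cF),
\]
which is $<\varepsilon$ once $\delta$, then $\varepsilon'$, then (in case~(ii)) $N$ are chosen in that order; in case~(i) the last term is absent since $P=P'$. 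Thus $P$ is a $(\pi(\cF),\varepsilon)$-F\o lner projection. For part~(i) the resulting net of triples verifies Theorem~\ref{theorem:characFolner2}(7), so $\cA$ is a F\o lner semi-pre-$C^*$-algebra and $\ol\cA=C^*_\mathrm{univ}(\cA)$ a F\o lner $C^*$-algebra. For part~(ii), since $\langle\cF\rangle\subset V$ we have $P\pi(A)\Omega=\pi(A)\Omega$ for $A\in\langle\cF\rangle$, whence $A\mapsto P\pi(A)\Omega$ is injective on $\langle\cF\rangle$; this verifies Definition~\ref{def:properly-Foelner} for $\ol\cA$, and restricting the representations to $\cA$ gives the ``in particular'' assertions.

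The hard point is the clash between norm approximation and dimension counting: replacing $\cF\subset\ol\cA$ by an approximant in $\cB$ is harmless in every Hilbert--Schmidt \emph{commutator} estimate but worthless for \emph{dimensions}, since $\dim((a-b_a)W)$ need not be small when $\|a-b_a\|$ is. This is exactly why, in part~(ii), one cannot pass to a $\cB$-approximant of $\cF$ when arranging injectivity of $A\mapsto P\pi(A)\Omega$ on $\langle\cF\rangle$; instead one enlarges the F\o lner projection to also cover $\pi(\langle\cF\rangle)\xi$ and leans on proper algebraic amenability to force $n=\dim W$ so large that this fixed, at most $|\cF|$-dimensional, enlargement disappears in the normalized Hilbert--Schmidt norm. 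The single auxiliary fact to record along the way is that an infinite-dimensional properly algebraically amenable algebra admits F\o lner subspaces of arbitrarily large dimension containing any prescribed finite set, which follows by padding that set with extra linearly independent elements before invoking Definition~\ref{def:alg-amenable2}(iii).
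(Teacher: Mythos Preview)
Your proof is correct and follows the same overall contour as the paper's---turn an algebraic F\o lner subspace $W$ into a Hilbert--Schmidt F\o lner projection in a suitable representation---but it differs in two genuine ways.

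First, to produce the representation, the paper takes a faithful \emph{state} on $\overline{\cA}$ and uses its GNS representation, so that $A\mapsto\pi(A)\Omega$ is automatically injective on all of $\overline{\cA}$. The existence of a faithful state requires separability, which forces the paper to treat the countably dimensional case first and then reduce the general case via Proposition~\ref{pro:alg-amenability-countability}. You instead amplify an arbitrary faithful representation and build, by a finite inductive argument, a vector $\xi$ for which $v\mapsto\sigma(v)\xi$ is injective merely on the finite-dimensional space $V$ that you actually need. This removes the separability reduction entirely, at the cost of a short (and correct) ad hoc construction.

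Second, for part~(ii) the paper keeps the projection onto $\pi(W)\Omega$ and achieves injectivity on $\langle\cF\rangle$ by approximating $\cF$ in the $\|\cdot\|_2$-norm by an orthonormal set $\cF''\subset\cB$, invoking proper amenability to arrange $\cF''\subset W$, and then arguing via a perturbation-of-Gram-matrix invertibility estimate. You sidestep this by simply enlarging the projection to one onto $\pi(W+\langle\cF\rangle)\xi$, so that $P\pi(A)\Omega=\pi(A)\Omega$ for $A\in\langle\cF\rangle$; proper amenability is used only to make $\dim W$ large enough that the at most $|\cF|$ extra dimensions are negligible in the normalized Hilbert--Schmidt estimate. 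Your route is more direct and avoids the $\|\cdot\|_2$-approximation and the $\eta$-invertibility lemma; the paper's route has the minor aesthetic advantage of producing the F\o lner projection directly from $W$ without modification.
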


\begin{proof}
(i) We assume first that $\cA$ is countably dimensional as a complex vector space. Since
$\overline{\cA}$ is separable there is a faithful state $\phi$ on $\ol{\cA}$ (see \cite[3.7.2]{pedersen79}), and we denote by $\pi\colon\ol{\cA}\to \cL(\cH)$ the GNS representation
with cyclic vector $\Omega$.  By Theorem~\ref{theorem:characFolner2}~(1), it suffices to show that 
 given any finite set $\cF \subset \overline{\cA}$ and any $\varepsilon > 0$, there is a $(\pi(\cF), \varepsilon)$-F\o lner projection $P \in \cL(\cH)$.
 By our assumption, the algebra $\cB$ is dense in $\overline{\cA}$, whence we can find a finite set 
 $\cF'\subset\cB$ such that $\cF \cup \cF^* \subset N_{{\varepsilon}/{4}}(\cF')$, the $\frac{\varepsilon}{4}$-neighborhood of $\cF'$. 
 Now by the algebraic amenability 
 of $\cB$, we can find a nonzero finite dimensional linear subspace $W \subset \cB$ such that 
 \begin{equation}
  \label{eq:goodforB}
  \frac{\dim(B W +W)}{\dim(W)}\leq 1+ \left( \frac{\varepsilon}{ 4 \| B \| + 1 }  \right)^2 
 \end{equation}
for all $B \in\cF'$.

Consider now the linear map $\Psi\colon \overline{\cA} \to \cH$ given by
$ A \mapsto \pi (A)\Omega $, which is injective because of the faithfulness of $\phi$. Define $P$ to be the 
orthogonal projection onto the finite-dimensional subspace $\Psi(W) \subset \cH$. We claim that $P$ is a 
$(\pi(\cF), \varepsilon)$-F\o lner projection. Indeed, for any $ A \in \cF $, there are $B, B' \in \cF'$ such that 
$ \| A - B \| \leq \frac{\varepsilon}{4} $ and $ \| A^* - B' \| \leq \frac{\varepsilon}{4} $, and thus
 \begin{align*}
  \frac{\| \pi(A) P-P \pi(A) \|_2}{\|P\|_2}  = &\ \frac{ \| (\1 - P) \pi(A) P - P \pi(A) (\1 - P)  \|_2 }{\|P\|_2} \\
  \leq &\  \frac{ \| (\1 - P) \pi(A) P \|_2 + \| (\1 - P) \pi(A^*) P \|_2 }{\|P\|_2} \\
  \leq &\  \frac{ \| (\1 - P) \pi(B) P \|_2 + \| (\1 - P) \pi(B') P \|_2 }{\|P\|_2} \\
  &\ + \frac{ \| (\1 - P) \pi(A - B) P \|_2 + \| (\1 - P) \pi(A^* - B') P \|_2 }{\|P\|_2} \\
  \leq &\  \frac{ \| (\1 - P) \pi(B) P \|_2  }{\|P\|_2} + \frac{ \| (\1 - P) \pi(B') P \|_2 }{\|P\|_2} + \frac{ \varepsilon }{2} \; .
 \end{align*}
In order to estimate $ \| (\1 - P) \pi(B) P  \|_2 $, we choose an orthonormal basis $\{e_i\}_{i \in \{1, \cdots , \dim(B W +W) \} \sqcup I}$ 
for $\cH$, such that $\{e_1, \cdots, e_{\dim(W)}\}$ is an orthonormal basis for $\Psi (W)$, and $\{e_1, \cdots, e_{\dim(B W +W)}\}$ 
is an orthonormal basis for $\Psi (B W +W)$. Hence 
 \begin{align*}
   \| (\1 - P) \pi(B) P  \|_2 ^2 = &\  \sum_{i, j \in \{1, \cdots , \dim(B W +W) \} \sqcup I} 
   \Big| \big\langle (\1 - P) \pi(B) P  e_i , e_j \big\rangle \Big|^2 \\
   = &\  \sum_{ j = \dim(W) + 1}^{\dim(B W +W)} \left(  \sum_{i = 1}^{\dim(W)} | \langle \pi(B)e_i , e_j \rangle |^2 \right) \\
   \le &\  \sum_{ j = \dim(W) + 1}^{\dim(B W +W)}  \| \pi(B)^* e_j  \|^2 \\
   \le &\  \big( \dim(B W +W) - \dim(W) \big) \cdot  \| \pi(B) \|^2 \;.
 \end{align*}
 Applying the same method to $B'$ instead of $B$, we have 
 \[
  \| (\1 - P) \pi(B') P  \|_2 ^2 \le  \big( \dim(B' W +W) - \dim(W) \big) \cdot  \| \pi(B') \|^2
 \]
 as well. Noticing that $ \|P\|_2^2 = \dim(W) $, we finally have
 \begin{align*}
  & \ \frac{\| \pi(A) P-P \pi(A) \|_2}{\|P\|_2} \\
  \leq &\  \frac{ \| (\1 - P) \pi(B) P \|_2 }{\|P\|_2} + 
            \frac{ \| (\1 - P) \pi(B') P \|_2 }{\|P\|_2}  + \frac{ \varepsilon }{2} \\
  \leq &\    \sqrt{ \frac{ \dim(B W +W) - \dim(W) }{ \dim(W) } }  \| \pi(B) \| 
          +  \sqrt{ \frac{ \dim(B' W +W) - \dim(W) }{ \dim(W) } }  \| \pi(B') \| + \frac{ \varepsilon }{2} \\
     < &\  \frac{ \varepsilon }{4} + \frac{ \varepsilon }{4} + \frac{ \varepsilon }{2}  =  \varepsilon \;.
 \end{align*}
 This shows that $P$ is a $(\pi(\cF), \varepsilon)$-F\o lner projection.

 Now we show the general case. Let $\cF\subset \overline{\cA}$ be a finite subset and $\varepsilon$ a positive real number. By Theorem~\ref{theorem:characFolner2}~(7), 
 it suffices to show that there is a representation $\pi \colon \ol{\cA} \to \cL(\cH)$ and a $(\pi (\cF), \varepsilon)$-projection $P$. 
 There is a countably dimensional subalgebra
$\cB_0\subset\cB$ such that $\cF\cup \cF^* \subset \overline{\cB_0}$. By Proposition~\ref{pro:alg-amenability-countability}, we can choose a countably dimensional subalgebra $\cB_1$ 
of $\cB$ containing $\cB_0$ such
that $\cB_1$ is algebraically amenable. Now, let $\cA_1$ be a separable $C^*$-subalgebra of $\ol{\cA}$ containing $\cB_1$, 
choose a faithful state on 
$\cA_1$ and extend it to a state $\phi$ on $\ol{\cA}$. Let $\pi \colon \ol{\cA} \to \cL (\cH)$ be the GNS-representation associated to $\phi$. 
Proceeding as in the first step, we can find a 
  $(\pi(\cF), \varepsilon)$-F\o lner projection $P \in \cL(\cH)$, as desired.

  If $\ol{\cA}$ is a F\o lner $C^*$-algebra, then $\cA$ is a F\o lner pre-$C^*$-algebra, by Theorem~\ref{theorem:characFolner2}.

 (ii) We prove first the statement in the case when $\cA$ is a countably dimensional unital pre-$C^*$-algebra. Since
$\ol{\cA}$ is separable there is a faithful state $\phi$ on $\ol{\cA}$ and we denote by $\pi\colon \ol{\cA}\to \cL(\cH)$ the GNS representation
with cyclic vector $\Omega$. Consider as above the linear map $\Psi\colon\ol{\cA}\to\cH$ defined by $\Psi(A)=\pi(A)\Omega$. This is an isometry since $\phi$ is faithful, where we consider the $\| \cdot \|_2$-norm in $\ol{\cA}$ associated to the state $\phi$. 
According to Definition~\ref{def:properly-Foelner} it is enough
to show that for any finite $\cF\subset \ol{\cA}$ and any $\varepsilon>0$ there is a $(\pi(\cF),\varepsilon)$-F\o lner
projection $P$ on $\cH$ such that the linear map
\[
 \langle\cF\rangle :=\mathrm{span}\cF\ni B \mapsto P\pi(B)\Omega \subset\cH
\]
is injective. We may assume that $\cF = \{ f_1,\dots , f_n \}$ is a finite orthonormal subset of $(\ol{\cA}, \|\cdot \|_2 )$.  
We can find a finite set 
 $\cF'\subset\cB$ such that $\cF \cup \cF^* \subset N_{{\varepsilon}/{4}}(\cF')$. Moreover, there is a constant $\eta >0$ such that for any
 $n\times n$ scalar matrix $C= (c_{ij})$, if $|c_{ij} - \delta_{ij}| <\eta $ then $C$ is an invertible matrix. 
By observing that $\cB$ is dense in $\ol{\cA}$ in the $\|\cdot \|_2$-norm, we can find 
 an orthonormal subset $\cF ''= \{ f_1'',\dots , f_n'' \} \subset \cB$ such that $\| f_i -f_i'' \|_2 <\eta $. 

Since $\cB$ is properly algebraically amenable, there exists a $(\pi(\cF'),\varepsilon)$-F\o lner subspace $W$ of $\cB$ such that
\eqref{eq:goodforB} holds for all $B\in \cF '$,  and, in addition,  $\cF '' \subseteq W$.  
Let $P$ be the orthogonal projection onto $\Psi(W)$ and note that $P\Psi (A) =\Psi (A) $ for any 
$A\in \langle \cF'' \rangle$. As in the proof of (i), we conclude that $P$ is a
$(\pi(\cF),\varepsilon)$-F\o lner projection. Moreover, since $\| f_i -f_i'' \|_2 <\eta $ for all $i$, we have that 
$|\langle f_i,f_j'' \rangle - \delta_{ij}| < \eta $ for all $i,j$, and so the orthogonal projection onto $\langle \Psi (\cF '') \rangle $, 
$P''\colon \langle  \Psi (f_1) ,\dots, \Psi (f_n) \rangle\to \langle  \Psi (f_1''),\dots, \Psi (f_n'') \rangle$ is injective.  Hence
\[
 \langle\cF\rangle\ni B \mapsto P\pi(B)\Omega \subset\cH
\]
is injective too. This shows that $\ol{\cA}$ is properly F\o lner. 

The general case is reduced to the countably dimensional case just as in (i), using Proposition~\ref{pro:alg-amenability-countability}. It is clear from the definition that if $\ol{\cA}$ is properly F\o lner, then
$\cA$ is a properly F\o lner pre-$C^*$-algebra. 
 \end{proof}

\begin{remark}\label{rem:counter-ex}
 The converse of Theorem~\ref{theorem:alg-amen-to-Folner} is in general not true: a F\o lner pre-$C^*$-algebra 
 may contain a dense subalgebra not satisfying algebraic amenability. For example, let $\Gamma$ be a non-amenable group. 
 Then the group algebra $\C\Gamma$ is not algebraically amenable by Example~\ref{ex:group-algebra}. 
Nevertheless, $(\C \Gamma,\|\cdot\|_{\mathrm{max}})$ (or the maximal group $C^*$-algebra 
$C^*(\Gamma)$) is always a F\o lner pre-$C^*$-algebra, because of the representation induced by the 
group homomorphism from $\Gamma$ to the trivial group (see Corollary~\ref{cor:nonzeroquotients}). 
In contrast, $(\C \Gamma,\|\cdot\|_{\mathrm{r}})$ 
(or  its reduced group $C^*$-algebra $C^*_{r}(\Gamma)$) is not a F\o lner pre-$C^*$-algebra (cf., Proposition~\ref{pro:crossedprods}).
\end{remark}

%%%%%%%%%%%%%%%%%%%%%%%%%%%%%%%%%%%%%%%%%%%%%%%%%%%%%%%%%%%%%%%%%%%%%%%%%%%%%%%
\section{Amenability of uniform Roe algebras}\label{sec:alg-amenable2}
%%%%%%%%%%%%%%%%%%%%%%%%%%%%%%%%%%%%%%%%%%%%%%%%%%%%%%%%%%%%%%%%%%%%%%%%%%%%%%%

In Section~6 of \cite{ALLW-1} we use the example of the translation algebra to show the close relation
between metric space amenability and algebraic amenability. In the present section we add into this picture
the notion of F\o lner $C^*$-algebra.
What naturally connects these areas is a family of related constructions introduced by and named after Roe. 
For our purpose we will be focusing on various variants of the so-called uniform Roe algebras.

We briefly recall the definition of the uniform Roe algebra and its variants. Let $(X,d)$ be an extended metric space 
with bounded geometry (see Subsection~\ref{subsec:amenability-metric}). 
Every partial translation of $X$ gives rise to a partial isometry in $\cL(\ell^2(X))$ in the following way: 
if $t\colon A\rightarrow B$ is a partial translation (cf.~Definition~\ref{def:part-transl}), then define $V_t\in \cL(\ell^2(X))$ by
\begin{equation}\label{partial-trans}
{\langle { \delta_y} , V_t {\delta_x} \rangle} = (V_t)_{yx}:=
\begin{cases}
           1 & \text{if}\  y=t(x)\\
           0 & \text{otherwise} \;.
\end{cases}
\end{equation}
It is clear 
that $V_t$ is a partial isometry and that $V_t^*$ is the partial isometry associated to the partial translation $t^{-1}:B\rightarrow A$. 
Moreover, the support and range projections $V_t^* V_t$ and $V_t V_t^*$ are the characteristic functions of $A$ and $B$, respectively, considered as
multiplication operators in $\cL(\ell^2(X))$. The partial isometries mentioned before specify a Hilbert space representation of the 
translation algebra (with the coefficient field $\K=\C$) analyzed in \cite[Section~6]{ALLW-1}. 

\begin{definition}\label{def:transl alg/alge uniRoe}
The \emph{translation algebra} $\C_\mathrm{u}(X)$ is defined as the $*$-subalgebra in $\cL(\ell^2(X))$ generated by $V_t$ for all the partial
translations $t$ on $X$. The \emph{algebraic uniform Roe algebra} $C^*_\mathrm{u,alg}(X)$ is defined as the 
unital $*$-subalgebra in $\cL(\ell^2(X))$ generated by $\C_\mathrm{u}(X)$ and $\ell^\infty(X)$, where 
$\ell^\infty(X)$ is identified with diagonal operators in $\cL(\ell^2(X))$.
\end{definition}

We remark that the algebraic uniform Roe algebra is also often defined with the help of the notion of finite propagation (see Remark~\ref{rem:uniform-Roe-definition}).
Note that since $V_t V_s = V_{t \circ s}$ for any partial translations $t$ and $s$, it follows that $\C_\mathrm{u}(X)$ is linearly spanned by the generators $V_t$.

Also note that $\C_\mathrm{u}(X)$ is a dense subalgebra in $C^*_\mathrm{u,alg}(X)$ and that $\ell^\infty(X)\not\subset\C_\mathrm{u}(X)$ unless $X$ is finite. Indeed, $\ell^\infty(X) \cap \C_\mathrm{u}(X)$ is the dense subalgebra of $\ell^\infty(X)$ made up of all (complex) step functions, i.e., those having finite images. Nevertheless, the following lemma tells us that from a $C^*$-algebraic point of view, these two algebras do not make too much of a difference. 

\begin{lemma}\label{lem:same-representations}
Let $(X,d)$ be an extended metric space with bounded geometry. 
Any $*$-re\-pre\-sen\-tation of $\C_\mathrm{u}(X) $ can be extended uniquely to a $*$-representation of $C^*_\mathrm{u,alg}(X)$ up to unitary equivalence.
\end{lemma}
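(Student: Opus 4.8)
The extension, if it exists, is completely forced: on $\C_\mathrm{u}(X)$ it must equal $\rho$, and since for every subset $A\subseteq X$ the identity map on $A$ is a partial translation with $V_{\mathrm{id}_A}=\chi_A$, the algebra $\C_\mathrm{u}(X)$ already contains the $*$-subalgebra $S(X)\subseteq\ell^\infty(X)$ of all step functions (functions with finite image), which is norm-dense in $\ell^\infty(X)$. So the plan is: (a) extend $\rho|_{S(X)}$ by continuity to a unital $*$-homomorphism $\sigma\colon\ell^\infty(X)\to\cL(\cH)$; (b) glue $\rho$ and $\sigma$ into a $*$-homomorphism $\tilde\rho$ on $C^*_\mathrm{u,alg}(X)$, which one first checks equals the linear span of the products $fV_t$ with $f\in\ell^\infty(X)$, $t\in\mathrm{PT}(X)$; (c) observe that the result is unique, whence unitarily equivalent representations of $\C_\mathrm{u}(X)$ yield unitarily equivalent extensions.

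For (a), note that $\rho(\1)=1_\cH$ by the unitality convention. If $a\in S(X)$ is self-adjoint, then $\|a\|_\infty^2\1-a^2$ is again a positive step function and hence has a step-function square root $b\in S(X)$; consequently $\|a\|_\infty^2\1-\rho(a)^2=\rho(b)^2\geq 0$, so $\|\rho(a)\|\leq\|a\|_\infty$, and the general case follows from $\|\rho(a)\|^2=\|\rho(a^*a)\|\leq\|a^*a\|_\infty=\|a\|_\infty^2$. Thus $\rho|_{S(X)}$ is contractive and, by density, extends uniquely to a contractive unital $*$-homomorphism $\sigma\colon\ell^\infty(X)\to\cL(\cH)$.

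Step (b) is the heart of the matter. That $C^*_\mathrm{u,alg}(X)$ equals the span of the products $fV_t$ follows from the ``commutation'' identities $fV_t=V_t(V_t^*fV_t)$ and $V_tg=(V_tgV_t^*)V_t$ together with $V_tV_s=V_{t\circ s}$. I would then set $\tilde\rho(\sum_i f_iV_{t_i}):=\sum_i\sigma(f_i)\rho(V_{t_i})$, and the main obstacle is to show this is well defined, since an operator can be written as such a sum in many ways. The plan is a refinement argument: given $t_1,\dots,t_n\in\mathrm{PT}(X)$, pass to the atoms $E$ of the finite Boolean algebra generated by the graphs $\mathrm{graph}(t_i)\subseteq X\times X$; each nonempty atom contained in some $\mathrm{graph}(t_i)$ is itself the graph of a partial translation $s_E$ (a controlled restriction of a bijection), one has $V_{t_i}=\sum_{E\subseteq\mathrm{graph}(t_i)}V_{s_E}$, and hence $\sum_i f_iV_{t_i}=\sum_E g_EV_{s_E}$ with $g_E=\sum_{i\,:\,\mathrm{graph}(t_i)\supseteq E}f_i$ and with the graphs of the $s_E$ pairwise disjoint. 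By linearity of $\rho$ and $\sigma$ one gets $\sum_i\sigma(f_i)\rho(V_{t_i})=\sum_E\sigma(g_E)\rho(V_{s_E})$, so it suffices to treat the disjoint-graph case: if $\sum_E g_EV_{s_E}=0$, then disjointness shows the $(s_E(x),x)$-entry equals $g_E(s_E(x))$, forcing $g_E\,\chi_{\mathrm{ran}(s_E)}=0$ for each $E$; since $\chi_{\mathrm{ran}(s_E)}=V_{s_E}V_{s_E}^*\in S(X)$, this gives $\sigma(g_E)\rho(V_{s_E})\rho(V_{s_E})^*=\sigma\big(g_E\chi_{\mathrm{ran}(s_E)}\big)=0$, and right-multiplying by $\rho(V_{s_E})$ while using $V_{s_E}V_{s_E}^*V_{s_E}=V_{s_E}$ yields $\sigma(g_E)\rho(V_{s_E})=0$. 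Hence $\tilde\rho$ is a well-defined linear map restricting to $\rho$ on $\C_\mathrm{u}(X)$ and to $\sigma$ on $\ell^\infty(X)$.

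It remains to verify that $\tilde\rho$ is a $*$-homomorphism and is unique. Both multiplicativity and $*$-preservation reduce, via the identities above and $V_tV_s=V_{t\circ s}$, to the single relation $\rho(V_t)\sigma(g)=\sigma(V_tgV_t^*)\rho(V_t)$ for $t\in\mathrm{PT}(X)$, $g\in\ell^\infty(X)$; this follows by approximating $g$ in norm by step functions $g_n$, for which $V_tg_n=(V_tg_nV_t^*)V_t$ already holds inside $\C_\mathrm{u}(X)$, applying $\rho$, and letting $n\to\infty$ (using continuity of $\sigma$ and of $g\mapsto V_tgV_t^*$). Finally, any $*$-representation $\tilde\rho'$ of $C^*_\mathrm{u,alg}(X)$ extending $\rho$ agrees with $\rho$ on $\C_\mathrm{u}(X)$, hence on $S(X)$; its restriction to $\ell^\infty(X)$ is a $*$-homomorphism between $C^*$-algebras, therefore contractive, hence equal to $\sigma$ by density of $S(X)$; so $\tilde\rho'$ agrees with $\tilde\rho$ on the generating set $\C_\mathrm{u}(X)\cup\ell^\infty(X)$, hence everywhere. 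Thus the extension is in fact literally unique, which a fortiori yields uniqueness up to unitary equivalence. (The bounded geometry hypothesis is not actually needed for this argument.)
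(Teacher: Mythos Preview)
Your proof is correct and takes a genuinely different route from the paper. The paper introduces an auxiliary seminorm on $C^*_\mathrm{u,alg}(X)$ by
\[
\|T\|_\mathrm{PT}=\inf\Big\{\textstyle\sum_i\|f_i\|_\infty:\ T=\sum_i V_{t_i}f_i,\ t_i\in\mathrm{PT}(X),\ f_i\in\ell^\infty(X)\Big\},
\]
observes that any $*$-representation $\rho$ of $\C_\mathrm{u}(X)$ is contractive for $\|\cdot\|_\mathrm{PT}$ (partial isometries have norm at most $1$ and step functions satisfy $\|\rho(g)\|\le\|g\|_\infty$), and notes that $\C_\mathrm{u}(X)$ is $\|\cdot\|_\mathrm{PT}$-dense in $C^*_\mathrm{u,alg}(X)$ (approximate each $f_i$ by step functions); the extension then drops out by continuity in one stroke. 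Your argument is more hands-on and purely algebraic: you extend first to $\ell^\infty(X)$ by continuity, then glue via the explicit formula $\tilde\rho(\sum f_iV_{t_i})=\sum\sigma(f_i)\rho(V_{t_i})$, with well-definedness handled by the Boolean-atom refinement of the graphs reducing to the disjoint-graph case. The paper's approach is shorter and more conceptual; yours is constructive, makes the uniqueness literal rather than ``up to unitary equivalence'', and correctly observes that bounded geometry is not used (the paper needs it only to identify $C^*_\mathrm{u,alg}(X)$ with the finite-propagation operators, which plays no role in this lemma).
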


\begin{proof}
 Since for any $f \in \ell^\infty(X)$ and $t$ a partial translation of $X$, we have the identity $f V_t = V_t (f \circ t)$, where 
 \[
  (f \circ t)(x) = \begin{cases}
                  f(t(x)) & \text{if\ } x \in \mathrm{dom}(t) \\
                  0  & \text{if\ } x \not\in \mathrm{dom}(t) \\
                 \end{cases}
 \]
 for any $x \in X$, thus any operator $T \in C^*_\mathrm{u,alg}(X)$ may be written as a finite sum $\displaystyle \sum_{i = 1}^n V_{t_i} f_i$ for partial translations $t_1, \ldots, t_n$ of $X$ and functions $f_1 ,\ldots, f_n \in \ell^\infty(X)$. We define a norm $\|\cdot\|_\mathrm{PT}$ on $C^*_\mathrm{u,alg}(X)$ as
 \[
  \|\ T \|_\mathrm{PT} := \inf \left\{ \sum_{i = 1}^n \|f_i\| \colon T = \sum_{i = 1}^n V_{t_i} f_i,\ t_1, \ldots, t_n \in \mathrm{PT}(X),\ f_1 ,\ldots, f_n \in \ell^\infty(X), \ n \in \N \right\}
 \]
 for any $T \in C^*_\mathrm{u,alg}(X)$; recall from Definition~\ref{def:part-transl} that $\mathrm{PT}(X)$ denotes the set of partial translations of $X$.
 Note that $\|\cdot\|_\mathrm{PT}$ is in general not a $C^*$-norm, but will give an upper bound for any $C^*$-norm:
 since any nonzero partial isometry has norm 1 under any $C^*$-norm, we see that any $*$-representation of $\C_\mathrm{u}(X)$ is a bounded linear operator with regard to $\|\cdot\|_\mathrm{PT}$ (restricted to $\C_\mathrm{u}(X)$). On the other hand, since every function in $\ell^\infty(X)$ may be approximated by step functions, we see that $\C_\mathrm{u}(X)$ is dense in $C^*_\mathrm{u,alg}(X)$ under $\|\cdot\|_\mathrm{PT}$. Therefore any $*$-representation of $\C_\mathrm{u}(X)$ extends uniquely to $C^*_\mathrm{u,alg}(X)$ up to unitary equivalence. 
\end{proof}

\begin{remark}\label{rem:uniform-Roe-definition}
	In the context of Roe algebras it is also convenient to define the \emph{propagation} of an operator $T\cong[T_{xy}]_{x,y\in X}\in \mathcal{L}(\ell^2(X))$ by
	\begin{align*}
	p(T):=\sup\Big\{d(x,y):x,y\in X\quad\mathrm{and}\quad T_{xy}\neq 0\Big\}.
	\end{align*}
	It is clear, due to the controlledness of the partial translations (cf., Definition~\ref{def:part-transl}),
	that every partial isometry coming from a partial translation has finite propagation. The elements in 
	$\ell^\infty(X)$, in particular, all characteristic functions $P_F$, $F\subset X$, have zero propagation. 
	Consequently, all the elements in the algebraic uniform Roe-algebra $C^*_\mathrm{u,alg}(X)$ have finite propagation. The converse is also true, i.e.,
	$C^*_\mathrm{u,alg}(X)$ contains all the operators in $\cL(\ell^2(X))$ with finite propagation (cf., \cite[Chapter 4]{Roe03}).
\end{remark}

\begin{definition}\label{def:Roe-u-max}
The \emph{uniform Roe algebra} $C^*_\mathrm{u}(X)$ is defined as the closure of $\C_\mathrm{u}(X)$ (or equivalently, $C^*_\mathrm{u,alg}(X)$) in
$\cL(\ell^2(X))$. The \emph{maximal uniform Roe algebra} $C^*_\mathrm{u,max}(X)$ is defined as the universal enveloping $C^*$-algebra of
$\C_\mathrm{u}(X)$ (or equivalently, $C^*_\mathrm{u,alg}(X)$).
\end{definition}

\begin{example}\label{ex:CP}
It is well-known that every countable discrete group admits a proper (i.e., balls are compact) left-invariant metric, which is unique up to coarse equivalence (see
Lemma~2.1 in \cite{Tu01}). If the group is finitely generated and discrete, one can simply take the word metric associated to any finite set of
generators. In the case that the metric space is a discrete and countable group $\Gamma$ its associated uniform Roe algebra is canonically
isomorphic with the reduced crossed product
\[
 C_\mathrm{u}^*(\Gamma)=\ell^\infty(\Gamma)\rtimes_r\Gamma\;,
\]
where $\Gamma$ acts on $\ell^\infty(\Gamma)$ by left translations. Since every abelian $C^*$-algebra is F\o lner (in fact quasidiagonal)
it follows from Proposition~2.12 of \cite{Bedos95}, that $C_\mathrm{u}^*(\Gamma)$  is a F\o lner $C^*$-algebra if and only if the group
$\Gamma$ is amenable (see also Proposition~\ref{pro:crossedprods}).
\end{example}

\begin{example}\label{ex:semi-Roe}
We consider here Roe algebras as semi-pre-$C^*$-algebras (see Subsection~\ref{subsec:semi-prec}).
Let $X$ be a metric space with bounded geometry and $C^*_{\text{u,alg}}(X)$ be its algebraic uniform Roe algebra over $\C$. 
Then since the generators $f \in \ell^\infty(X)$ and $V_t$, for partial translations $t$ on $X$, 
all clearly belong to the subalgebra of bounded elements, 
we see that $C^*_{\text{u,alg}}(X)$ satisfies the Combes axiom, and is thus a 
semi-pre-$C^*$-algebra with respect to the algebraic $*$-positive cone $C^*_{\text{u,alg}}(X)_+$.
It follows directly from the definition that $C_{\text{univ}}^*(C^*_{\text{u,alg}}(X))=C^*_{u,\text{max}}(X)$, 
the maximal uniform Roe algebra of $X$.
If the algebraic uniform Roe algebra $C^*_{\text{u,alg}}(X)$ is equipped with the following archimedean closed $*$-positive cone
$\C_{u,\text{r}}(X)_+:=C^*_{\text{u,alg}}(X)\cap \cL(\ell^2(X))_+$, then the resultant semi-pre-$C^*$-algebra $\C_{u,\text{r}}(X)$ satisfies
$C^*_{\text{univ}}(\C_{u,\text{r}}(X))=C_\mathrm{u}^*(X)$, the uniform Roe algebra of $X$, which follows easily from Example~\ref{basis}. Moreover, 
Theorem~\ref{semi} and Example~\ref{basis} imply that $C^*_{u,\text{max}}(X)=C^*_{u}(X)$ if and only if
$\text{arch}(C^*_{\text{u,alg}}(X)_+)=\C_{u,\text{r}}(X)_+$. It is well-known that if $X$ has property A, 
then $C^*_{u,\text{max}}(X)=C^*_{u}(X)$. The converse implication is open in general (see \cite{SW13}).
\end{example}

Next we generalize the group case given in Example~\ref{ex:CP} to general metric spaces with bounded geometry
(see also \cite[Theorem~4.6]{Roe03}, \cite[Proposition~5.5]{RS12} and  \cite[Theorem~1]{Elek97}).
We give a multitude of equivalent conditions to characterize amenability. To formulate some of the characterizations, we need to recall a few concepts.

\begin{definition}
	We say that two idempotents $e$ and $f$ in an algebra $\cA$ are equivalent and write $e\sim f$, 
	if there are $x,y\in \cA$ such that $e=xy$ and $f=yx$. 
	An idempotent $e$ in an algebra $\cA$ is said to be {\it properly infinite} if there are mutually 
	orthogonal idempotents $e_1, e_2$ in $e\cA e$ such that $e_1 \sim e\sim e_2$.
	A (nonzero) unital algebra $\cA$ is said to be {\it properly infinite} if $\1$ is a properly infinite idempotent. 
\end{definition}

\begin{definition}[see also \cite{Abrams15,Leav,ALLW-1} for additional results and motivation]
	Let $n,m$ be integers such that $1\le m< n$. Then the Leavitt algebra (over $\C$)
	$L(m,n)$ is the algebra generated by elements $X_{ij}$ and $Y_{ji}$,
	for $i=1,\dots,m$ and $j=1,\dots,n$, such that $XY = \1_m$ and $YX=
	\1_n$, where $X$ denotes the $m\times n$ matrix $(X_{ij})$ and $Y$
	denotes the $n\times m$ matrix $(Y_{ji})$. The algebra $L_{\infty }$ is the unital algebra generated by 
	$X_1,Y_1,X_2, Y_2, \dots $ subject to the relations $Y_jX_i = \delta_{i,j} \1$.
\end{definition}

We remark that the algebras $L(m,n)$ are simple if and only if $m=1$ (\cite[Theorems 2 and 3]{LeavDuke}) and the algebra $L_{\infty }$ is simple (\cite[Theorem 4.3]{AGP}). 

\begin{theorem}\label{theorem:main1}
Let $(X,d)$ be an extended metric space with bounded geometry, $\cA$ be a unital pre-$C^*$-algebra that contains the translation algebra $\C_\mathrm{u}(X)$ 
as a dense $*$-subalgebra. Let $k,n$ be positive integers with $n\geq 2$.
Then the following conditions are equivalent: 
\begin{enumerate}
 \item \label{theorem:main1:coarse} $(X,d)$ is amenable.
 \item \label{theorem:main1:algebraic} $C^*_\mathrm{u,alg}(X)$ is algebraically amenable. 
 \item \label{theorem:main1:Foelner} $\cA$ is a F\o lner pre-$C^*$-algebra. 
 \item \label{theorem:main1:state} $\cA$ has a tracial state.
 \item \label{theorem:main1:stably-properly-infinite} $M_k(\cA)$ is not properly infinite. 
 \item \label{theorem:main1:Leavitt-A} $\cA$ does not contain the Leavitt algebra $L(1,n)$ as a unital $*$-subalgebra.
 \item[(6')] \label{theorem:main1:Leavitt-A-a} $\cA$ does not contain the Leavitt algebra $L(1,2)$ as a unital $*$-subalgebra.
 \item[(6'')] \label{theorem:main1:Leavitt-A-b} $\cA$ does not contain the Leavitt algebra $L_\infty$ as a unital $*$-subalgebra.
 \item \label{theorem:main1:K-theory-A} $[1]_0\neq [0]_0$ in the (algebraic) $K$-group $K_0(\cA)$.
\end{enumerate}
\end{theorem}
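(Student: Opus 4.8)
The plan is to establish the cycle of implications (1) $\Rightarrow$ (2) $\Rightarrow$ (3) $\Rightarrow$ (4) $\Rightarrow$ (5) $\Rightarrow$ (8) $\Rightarrow$ (6) $\Rightarrow$ (6$'$) $\Rightarrow$ (1), and separately (6$'$) $\Leftrightarrow$ (6$''$), so that all eight conditions are equivalent. Several of these arrows are either already available or nearly immediate. First, (1) $\Rightarrow$ (2) is essentially the content of our earlier work \cite{ALLW-1}: an $(R,\varepsilon)$-F\o lner set $F$ in $X$ gives, via the characteristic projection $P_F \in \ell^\infty(X) \subset C^*_\mathrm{u,alg}(X)$, a F\o lner subspace $W = P_F \cL(\ell^2(X)) \cap C^*_\mathrm{u,alg}(X)$ for the finitely many generating partial translations of propagation at most $R$; this is spelled out in \cite[Section~6]{ALLW-1}. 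Next, (2) $\Rightarrow$ (3) is a direct application of Theorem~\ref{theorem:alg-amen-to-Folner}(i), using that $\C_\mathrm{u}(X)$ is dense in $C^*_\mathrm{u,alg}(X)$ which in turn is dense in $\cA$. The implication (3) $\Rightarrow$ (4) follows from Theorem~\ref{theorem:characFolner2}: a F\o lner pre-$C^*$-algebra has a F\o lner net in its defining representation, hence an amenable trace on $\overline{\cA}$ by B\'edos, which restricts to a tracial state on $\cA$.

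The implications (4) $\Rightarrow$ (5), (5) $\Rightarrow$ (8), (8) $\Rightarrow$ (6), and (6) $\Rightarrow$ (6$'$) are the ``algebraic'' part and are largely standard. For (4) $\Rightarrow$ (5): a tracial state $\tau$ on $\cA$ induces a tracial state on $M_k(\cA)$ (via $\frac1k \tau \otimes \tr$), and a properly infinite unital algebra admits no tracial state, since if $\1 = xy = e_1 + e_2$ with $e_i \sim \1$ then $\tau(\1) = \tau(xy) = \tau(yx) = \tau(\1)$ forces the contradiction $\tau(\1) \geq 2\tau(\1)$ after summing. For (5) $\Rightarrow$ (8): if $[1]_0 = [0]_0$ in $K_0(\cA)$, then by the standard picture of the $K_0$-group there is some $k$ with $\1 \oplus 0_k \sim 0 \oplus 0_k$ in $M_{k+1}(\cA)$ modulo stable equivalence, which after a further stabilization yields that $M_m(\cA)$ contains a copy of $\1$ that is stably, hence properly, infinite — the cleanest route is to recall (as in \cite{Elek97,RS12,ALLW-1}) that $[1]_0 = [0]_0$ is equivalent to $M_k(\cA)$ being properly infinite for some $k$. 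The implication (8) $\Rightarrow$ (6) uses that the Leavitt algebra $L(1,n)$ has $[1]_0 = [0]_0$ (indeed $[1]_0 = n[1]_0$, and since $L(1,n)$ is simple this forces $[1]_0=[0]_0$ in $K_0(L(1,n))$, which maps to $[1]_0=[0]_0$ in $K_0(\cA)$ under a unital embedding), and dually, containing $L(1,n)$ unitally makes $\1$ properly infinite; contrapositively, if $\cA$ has no such subalgebra then in particular $[1]_0 \neq [0]_0$. Finally (6) $\Rightarrow$ (6$'$) is trivial ($n = 2$ is a special case), and (6$'$) $\Leftrightarrow$ (6$''$) follows because $L(1,2)$ and $L_\infty$ each contain a unital copy of the other (standard, e.g.\ \cite{Abrams15}), so a unital copy of one in $\cA$ yields a unital copy of the other.

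The crux — and the only place where the metric geometry genuinely re-enters — is the closing implication (6$'$) $\Rightarrow$ (1), equivalently its contrapositive: if $(X,d)$ is \emph{not} amenable, then $\cA$ contains $L(1,2)$ as a unital $*$-subalgebra. By Theorem~\ref{theorem:amenable-metric}, non-amenability of $X$ gives a paradoxical decomposition $X = X_+ \sqcup X_-$ with partial translations $t_\pm\colon X \to X_\pm$. Passing to the associated partial isometries $V_{t_\pm} \in \C_\mathrm{u}(X) \subseteq \cA$, one checks that $V_{t_+}^* V_{t_+} = \1 = V_{t_-}^* V_{t_-}$ (each $t_\pm$ is defined on all of $X$) while $V_{t_+} V_{t_+}^* = P_{X_+}$ and $V_{t_-} V_{t_-}^* = P_{X_-}$ are orthogonal projections summing to $\1$. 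Thus setting $X_{1} := (V_{t_+}^*, V_{t_-}^*)$ (a $2 \times 1$ row) and $Y_1 := \binom{V_{t_+}}{V_{t_-}}$ (a $1 \times 2$ column), the relations $X_1 Y_1 = \1$ (a $1\times 1$ identity) and $Y_1 X_1 = \1_2$ (a $2 \times 2$ identity) hold, exhibiting precisely a unital $*$-homomorphism $L(1,2) \to \cA$; it is injective because $L(1,2)$ is simple. This is the main obstacle in the sense that it is the step whose proof is not purely formal algebra: it requires invoking the metric-space dichotomy (Theorem~\ref{theorem:amenable-metric}) and carefully verifying that the partial-translation picture furnishes the Leavitt relations with the right adjointness properties. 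With this arrow in place the cycle closes and, together with (6$'$) $\Leftrightarrow$ (6$''$) and the fact that condition (6) trivially implies (6$'$), all eight conditions are shown equivalent.
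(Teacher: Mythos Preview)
Your cycle has genuine errors in the segment $(5) \Rightarrow (7) \Rightarrow (6) \Rightarrow (6')$ (you write ``(8)'' for the $K$-theory condition, which is (7) in the statement). The claim that $[1]_0 = [0]_0$ in $K_0(L(1,n))$ is false for $n > 2$: the relation $[1]_0 = n[1]_0$ only gives $(n-1)[1]_0 = 0$, and in fact $K_0(L(1,n)) \cong \Z/(n-1)$ with $[1]_0$ a generator, so simplicity does not help. Hence a unital embedding $L(1,n) \hookrightarrow \cA$ does not force $[1]_0 = [0]_0$ in $K_0(\cA)$ when $n>2$, and your $(7)\Rightarrow(6)$ fails. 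Your $(5)\Rightarrow(7)$ is also incomplete: you only argue that $[1]_0=[0]_0$ makes $M_m(\cA)$ properly infinite for \emph{some} $m$, whereas (5) is stated for the \emph{fixed} $k$ of the hypothesis, and there is no purely algebraic reason why proper infiniteness should pass between different matrix sizes. Finally, $(6)\Rightarrow(6')$ is not the ``trivial'' specialization $n=2$; for $n>2$ it is the contrapositive of the unital embedding $L(1,n)\hookrightarrow L(1,2)$, which requires an argument.

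The paper's routing avoids all three problems: from (4) it branches to both $(4)\Rightarrow(5)$ and $(4)\Rightarrow(7)$ directly (the latter via the induced homomorphism $\tau_*\colon K_0(\cA)\to\R$), then proves $(5)\Rightarrow(6)$ directly (a unital copy of $L(1,n)$ makes $\1_\cA$, and hence $\1_{M_k(\cA)}$ for every $k$, properly infinite), uses the explicit embedding $L(1,n)\hookrightarrow L(1,2)$ for $(6)\Rightarrow(6')$, and proves $(7)\Rightarrow(6')$ (this is exactly the $n=2$ case, where your $K$-theory computation \emph{is} valid). Both branches rejoin at $(6')\Rightarrow(1)$, which you handle correctly via the paradoxical decomposition. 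Your treatment of $(6'')$ via mutual unital embeddings of $L(1,2)$ and $L_\infty$ is fine and amounts to the paper's use of \cite[Proposition~5.2]{ALLW-1}.
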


\begin{proof} 
(\ref{theorem:main1:coarse}) $\Rightarrow$ (\ref{theorem:main1:algebraic}): 
For any $\varepsilon>0$ and any finite set $\cF\subset C^*_\mathrm{u,alg}(X)$ let $R$ be an upper bound for the propagation of operators in $\cF$. 
Choose an $(R,\varepsilon)$-F\o lner set $F\subset X$ and consider the subspace (in fact, subalgebra) given by $W:=P_F C^*_\mathrm{u,alg}(X) P_F$.
Since $F$ is finite we have that $P_F C^*_\mathrm{u,alg}(X) P_F=P_F \C_\mathrm{u}(X) P_F$ and the proof that 
$W$ implements algebraic amenability follows from Theorem~6.3 in \cite{ALLW-1} taking as field $\K=\C$.

(\ref{theorem:main1:algebraic}) $\Rightarrow$ (\ref{theorem:main1:Foelner}): This implication follows directly from Example~\ref{basis} and Theorem~\ref{theorem:alg-amen-to-Folner}(i).

(\ref{theorem:main1:Foelner}) $\Rightarrow$ (\ref{theorem:main1:state}): This follows from Theorem~\ref{theorem:characFolner2}:
there exists a faithful (unital) representation $\pi$ of $\cA$ such that $\pi(\cA)$ has an amenable trace.

(\ref{theorem:main1:state}) $\Rightarrow$ (\ref{theorem:main1:stably-properly-infinite}): Recall that a projection $P$ in a pre-$C^*$-algebra 
$\cA$ is properly infinite if there exist partial isometries $X$,$Y$ in $\cA$ such that $X^*X=Y^*Y=P$ and satisfying $X^*Y=Y^*X=0$ as well as
$XX^*+YY^*\leq P$. A unital $*$-algebra is properly infinite if the unit is properly infinite.
Assume that $M_k(\cA)$ is properly infinite for some $n\in \N$. Then there are isometries 
$X$ and $Y$ in $M_k(\cA)$ such that $X^*X=Y^*Y=\1_k$, $X^*Y=Y^*X=0$ and $XX^*+YY^*\leq \1_k$. Let $\tau:\cA\rightarrow \C$ 
be a tracial state and define a positive trace $\tau_k\colon M_k(\cA)\rightarrow \C$ by
\begin{align*}
\tau_k\big([X_{ij}]\big)=\sum_{i=1}^n\tau(X_{ii}).
\end{align*}
It is clear that $\tau_k(\1_k)=n$. Hence $2k=\tau_k(XX^*+YY^*)\leq \tau_k(\1_k)=k$, which is a contradiction.

(\ref{theorem:main1:stably-properly-infinite}) $\Rightarrow$ (\ref{theorem:main1:Leavitt-A}): 
Assume that $L(1,n)$ is a unital $*$-subalgebra of $\cA$. Then it follows from the definition of the 
Leavitt algebra that the unit in $\cA$ is a properly infinite projection, hence also $M_k(\cA)$ is properly infinite.

(\ref{theorem:main1:Leavitt-A}) $\Rightarrow$ (\ref{theorem:main1:Leavitt-A}'): This follows from the standard unital embedding 
$L(1,n) \hookrightarrow L(1,2)$ as $*$-algebras: 
the generators $X_{11}, X_{12}, X_{13}, \ldots, X_{1n} , Y_{11}, Y_{21}, Y_{31}, \ldots, Y_{n1}$ of $L(1,n)$ are mapped to, respectively, 
\begin{align*}
 \widetilde{X}_{11}^{n-1}\,, && \widetilde{X}_{11}^{n-2} \widetilde{X}_{12}\,, && \widetilde{X}_{11}^{n-3} \widetilde{X}_{12}\,, && \ldots\,, && \widetilde{X}_{11} \widetilde{X}_{12}\,, && \widetilde{X}_{12},\\ 
 \widetilde{Y}_{11}^{n-1}\,, && \widetilde{Y}_{21} \widetilde{Y}_{11}^{n-2}\,, && \widetilde{Y}_{21} \widetilde{Y}_{11}^{n-3}\,, && \ldots\,, && \widetilde{Y}_{21} \widetilde{Y}_{11}\,, && \widetilde{Y}_{21} \,,
\end{align*}
 where $\widetilde{X}_{11}, \widetilde{X}_{12}, \widetilde{Y}_{11}, \widetilde{Y}_2$ are the canonical generators of $L(1,2)$
(see also \cite{BS16}).

(\ref{theorem:main1:Leavitt-A}') $\Rightarrow$ (\ref{theorem:main1:coarse}): If $X$ is not amenable, then by Theorem~\ref{theorem:amenable-metric}~(2) 
there exist a partition $X=X_1\sqcup X_2$ and two partial
translations $t_i\colon X\rightarrow X_i$. Let $V_{t_i}$ be the isometry corresponding to $t_i$, $i=1,2$, which satisfy
$
 V_{t_1}V_{t_1}^*+V_{t_2}V_{t_2}^*=\1
$.
It follows from the universal property and simplicity of the Leavitt algebras that $L(1,2)$ is
$*$-isomorphic to the unital $*$-subalgebra generated by $V_{t_1}$ and $V_{t_2}$.

(\ref{theorem:main1:Leavitt-A}'') $\Rightarrow$ (\ref{theorem:main1:Leavitt-A-a}') and (\ref{theorem:main1:stably-properly-infinite}) $\Rightarrow$ (\ref{theorem:main1:Leavitt-A}''): these implications follow from the fact that
a unital algebra is properly infinite if and only if there is a unital embedding $L_{\infty }\hookrightarrow \cA$
(see Proposition~5.2 in \cite{ALLW-1}). Finally, it remains to make contact with the last condition.

(\ref{theorem:main1:state}) $\Rightarrow$ (\ref{theorem:main1:K-theory-A}): If $\tau\colon \cA\rightarrow \C$ is a tracial state, then it follows from the universal property of $K_0$ that $\tau$ induces a (unique) group homomorphism 
\[ 
\tau_*\colon K_0(\cA)\rightarrow \R \quad\mathrm{with}\quad \tau_*([P]_0)=\tau(P)\quad\mathrm{for~any~idempotent~}\;P\in\cA′\;.
\]
As $\tau(\1) = 1$, we have $[\1]_0\neq [0]_0$ in $K_0(\cA)$.

(\ref{theorem:main1:K-theory-A}) $\Rightarrow$ (\ref{theorem:main1:Leavitt-A}'): 
Suppose that the Leavitt algebra $L(1,2)$ is a unital subalgebra of $\cA$. Let $X_1$,$X_2$, $Y_1$,$Y_2$ be the canonical generators of $L(1,2)$. Then the idempotents $P = X_1 Y_1$ and $Q = X_2 Y_2$ sum up to $\1$, while each is also algebraically equivalent to $\1$ in $\cA$. Thus,
\begin{align*}
[\1]_0 = [P+Q]_0= [P]_0 + [Q]_0 = [\1]_0 + [\1]_0 
\end{align*}
and we conclude that $[\1]_0=[0]_0$ in $K_0(\cA)$.
\end{proof}

An immediate consequence of the preceding equivalences is the following result.
\begin{corollary}
Let $\cA$ be the uniform Roe algebra $C_\mathrm{u}^*(X)$ or the maximal uniform Roe-algebra $C_\mathrm{u,max}^*(X)$.
Then $\cA$ is a F\o lner $C^*$-algebra if and only if the Cuntz algebra $\cO_n$, $n\geq 2$, does not unitally embed into $\cA$
if and only if $(X,d)$ is an amenable metric space.
\end{corollary}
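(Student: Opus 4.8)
The plan is to deduce this corollary directly from Theorem~\ref{theorem:main1} applied to the two specific unital pre-$C^*$-algebras $\cA = C_\mathrm{u}^*(X)$ and $\cA = C_\mathrm{u,max}^*(X)$, both of which contain $\C_\mathrm{u}(X)$ as a dense $*$-subalgebra (by Definition~\ref{def:Roe-u-max} and Example~\ref{ex:semi-Roe}), so the hypotheses of that theorem are met. First I would observe that the equivalence ``$\cA$ is a F\o lner $C^*$-algebra $\iff$ $(X,d)$ is amenable'' is nothing but the equivalence of conditions (\ref{theorem:main1:Foelner}) and (\ref{theorem:main1:coarse}) in Theorem~\ref{theorem:main1}, together with the trivial fact that for a genuine $C^*$-algebra the notions of F\o lner $C^*$-algebra and F\o lner pre-$C^*$-algebra coincide (Definition~\ref{def:FA2}). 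This already settles the ``outer'' equivalence of the corollary.

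The remaining point is to insert the condition ``$\cO_n$ does not unitally embed into $\cA$'' into the chain. For this I would use the $C^*$-algebraic avatar of condition (\ref{theorem:main1:Leavitt-A}'). The key step is: a unital $C^*$-algebra $\cA$ is properly infinite if and only if the Cuntz algebra $\cO_n$ (equivalently $\cO_2$, or $\cO_\infty$) embeds unitally into $\cA$; this is standard (by the universal property of $\cO_n$, two isometries with orthogonal ranges summing to something $\le \1$ generate a copy of $\cO_n$, and conversely $\cO_n$ is simple so any unital $*$-homomorphism out of it is injective). On the one hand, if $\cO_n$ embeds unitally into $\cA$, then $\cA$ is properly infinite, hence $M_1(\cA) = \cA$ is properly infinite, contradicting (\ref{theorem:main1:stably-properly-infinite}), so $(X,d)$ is non-amenable. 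On the other hand, if $(X,d)$ is non-amenable, then by the implication $(6')\Rightarrow(1)$ already proved inside Theorem~\ref{theorem:main1}'s proof, there is a partition $X = X_1 \sqcup X_2$ with partial translations $t_i\colon X \to X_i$, whose isometries $V_{t_i} \in \C_\mathrm{u}(X) \subseteq \cA$ satisfy $V_{t_1}V_{t_1}^* + V_{t_2}V_{t_2}^* = \1$; by the universal property and simplicity of $\cO_2$ this yields a unital embedding $\cO_2 \hookrightarrow \cA$, and via the standard unital embedding $\cO_n \hookrightarrow \cO_2$ (the $C^*$-analogue of the Leavitt embedding $L(1,n)\hookrightarrow L(1,2)$ displayed in the proof of Theorem~\ref{theorem:main1}) one gets a unital copy of $\cO_n$ in $\cA$ for every $n \ge 2$.

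I expect there to be essentially no obstacle here, since this is a packaging of already-established facts; the only mild care needed is to make sure the partial isometries $V_{t_i}$ coming from partial translations really do land in both $C_\mathrm{u}^*(X)$ (obvious, as they lie in $\C_\mathrm{u}(X)$) and in the maximal version $C_\mathrm{u,max}^*(X)$ (also clear, as $\C_\mathrm{u}(X)$ sits densely inside it), so that the argument applies verbatim to both algebras. Thus the proof reduces to: ``Apply Theorem~\ref{theorem:main1} with $\cA$ being either $C_\mathrm{u}^*(X)$ or $C_\mathrm{u,max}^*(X)$. The equivalence of F\o lnerness and amenability of $X$ is (\ref{theorem:main1:Foelner})$\Leftrightarrow$(\ref{theorem:main1:coarse}). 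For a unital $C^*$-algebra, $\cO_n$ embeds unitally iff $\cA$ is properly infinite iff (by $(5)\Leftrightarrow(6')$ and the standard embedding $\cO_n\hookrightarrow\cO_2$) $\cA$ contains $L(1,2)$ unitally, which by (\ref{theorem:main1:Leavitt-A}')$\Leftrightarrow$(\ref{theorem:main1:coarse}) happens iff $X$ is non-amenable.'' This gives the three-way equivalence claimed in the corollary.
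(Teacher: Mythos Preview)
Your approach is correct and matches the paper's, which simply declares the corollary an immediate consequence of Theorem~\ref{theorem:main1}. However, one statement you make is false in general and should be corrected: it is \emph{not} true for an arbitrary unital $C^*$-algebra $\cA$ that proper infiniteness is equivalent to a unital embedding of $\cO_n$ for finite $n\ge 2$. The correct general fact involves $\cO_\infty$; for instance, $\cO_\infty$ itself is properly infinite but admits no unital embedding of $\cO_2$ (a $K_0$-obstruction: such an embedding would force $[1]_0=0$ in $K_0(\cO_\infty)\cong\Z$). Your parenthetical ``two isometries with orthogonal ranges summing to something $\le \1$ generate a copy of $\cO_n$'' is likewise incorrect---with strict inequality they generate a Cuntz--Toeplitz algebra.

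Fortunately your detailed argument does not actually rely on this false general equivalence: you correctly observe that the isometries $V_{t_1},V_{t_2}$ arising from the paradoxical decomposition satisfy $V_{t_1}V_{t_1}^*+V_{t_2}V_{t_2}^*=\1$ with \emph{equality}, which (together with simplicity of $\cO_2$) yields a genuine unital embedding $\cO_2\hookrightarrow\cA$, and then the standard unital embedding $\cO_n\hookrightarrow\cO_2$ finishes it. So the proof stands once you drop the spurious general claim and argue only via this concrete construction for one direction and the easy implication ``$\cO_n$ embeds unitally $\Rightarrow$ $\cA$ properly infinite $\Rightarrow$ $X$ non-amenable'' for the other.
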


\begin{remark}
  Note that by virtue of Lemma~\ref{lem:same-representations}, the result of the preceding theorem remains true if we replace $C^*_{\text{u,alg}}(X)$ in 
  (\ref{theorem:main1:algebraic}) by a 
  subalgebra (not necessary a $*$-subalgebra) $\cC$ of $C^*_{\text{u,alg}}(X)$ that contains the 
  translation algebra $\C_\mathrm{u}(X)$.
\end{remark}

We also have an analogous result linking various notions of proper amenability (see also Definition~\ref{def:properly-Foelner}):
\begin{theorem}\label{teo:Roe-proper-amenability}
 Let $(X,d)$ be an extended metric space.
 Then the following conditions are equivalent: 
 \begin{enumerate}
 \item \label{teo:Roe-proper-amenability:coarse} $(X,d)$ is properly amenable. 
 \item \label{teo:Roe-proper-amenability:algebraic} $C^*_\mathrm{u,alg}(X)$ is properly algebraically amenable. 
 \item \label{teo:Roe-proper-amenability:Foelner} $C^*_\mathrm{u}(X)$ is a properly F\o lner $C^*$-algebra.
\end{enumerate}
\end{theorem}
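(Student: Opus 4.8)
The plan is to prove the cyclic chain $(\ref{teo:Roe-proper-amenability:coarse})\Rightarrow(\ref{teo:Roe-proper-amenability:algebraic})\Rightarrow(\ref{teo:Roe-proper-amenability:Foelner})\Rightarrow(\ref{teo:Roe-proper-amenability:coarse})$, keeping in force the bounded geometry assumption used throughout Section~\ref{sec:alg-amenable2}. The implication $(\ref{teo:Roe-proper-amenability:algebraic})\Rightarrow(\ref{teo:Roe-proper-amenability:Foelner})$ requires no new work: regard $C^*_\mathrm{u,alg}(X)$ as a unital pre-$C^*$-algebra with the operator norm inherited from $\cL(\ell^2(X))$; it is a dense subalgebra of itself with completion $C^*_\mathrm{u}(X)$ (Definition~\ref{def:Roe-u-max}), so Theorem~\ref{theorem:alg-amen-to-Folner}(ii), applied with $\cA=\cB=C^*_\mathrm{u,alg}(X)$, gives at once that $C^*_\mathrm{u}(X)=\overline{C^*_\mathrm{u,alg}(X)}$ is a properly F\o lner $C^*$-algebra.

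For $(\ref{teo:Roe-proper-amenability:coarse})\Rightarrow(\ref{teo:Roe-proper-amenability:algebraic})$ I would run a ``proper'' refinement of the proof of $(\ref{theorem:main1:coarse})\Rightarrow(\ref{theorem:main1:algebraic})$ in Theorem~\ref{theorem:main1}. If $X$ is finite then $C^*_\mathrm{u,alg}(X)\cong M_{|X|}(\C)$ is finite-dimensional, hence properly algebraically amenable; so assume $|X|=\infty$. Given a finite set $\cF\subset C^*_\mathrm{u,alg}(X)$ and $\varepsilon>0$, pick $R>0$ bounding the propagation of all elements of $\cF$ (Remark~\ref{rem:uniform-Roe-definition}), and use proper amenability of $X$ together with Lemma~\ref{lem:proper-cardinality} to choose an $(R,\varepsilon/2)$-F\o lner set $F\subset X$ with $|F|$ so large that $2|\cF|/|F|^2\le\varepsilon/2$. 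Put
\[
 W:=P_F\,C^*_\mathrm{u,alg}(X)\,P_F+\mathrm{span}(\cF)\subseteq C^*_\mathrm{u,alg}(X)\,.
\]
By Remark~\ref{rem:uniform-Roe-definition}, $P_F\,C^*_\mathrm{u,alg}(X)\,P_F=P_F\,\cL(\ell^2(X))\,P_F\cong M_{|F|}(\C)$, so $W$ is a nonzero finite-dimensional subspace with $|F|^2\le\dim W\le|F|^2+|\cF|$ and $\cF\subset W$. For $T\in\cF$, propagation $\le R$ gives $TP_F=P_{B_R(F)}\,TP_F$, whence
\[
 TW+W\ \subseteq\ P_{B_R(F)}\,\cL(\ell^2(X))\,P_F\ +\ \mathrm{span}(\cF)\ +\ T\,\mathrm{span}(\cF)\,,
\]
so $\dim(TW+W)\le|B_R(F)|\cdot|F|+2|\cF|$. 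Since $B_R(F)=F\sqcup\partial^+_R F$ with $\partial^+_R F\subseteq\partial_R F$ and $F$ is $(R,\varepsilon/2)$-F\o lner, we get $|B_R(F)|\le(1+\varepsilon/2)|F|$, and therefore, using $\dim W\ge|F|^2$,
\[
 \frac{\dim(TW+W)}{\dim W}\ \le\ \frac{(1+\varepsilon/2)|F|^2+2|\cF|}{|F|^2}\ \le\ 1+\varepsilon\,.
\]
Thus $W$ is an $(\cF,\varepsilon)$-F\o lner subspace of $C^*_\mathrm{u,alg}(X)$ containing $\cF$, establishing proper algebraic amenability.

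For $(\ref{teo:Roe-proper-amenability:Foelner})\Rightarrow(\ref{teo:Roe-proper-amenability:coarse})$ I would argue by contraposition. A finite $X$ is properly amenable, so assume $|X|=\infty$ and that $X$ is not properly amenable. If $X$ is not amenable, then $C^*_\mathrm{u}(X)$ is not a F\o lner $C^*$-algebra by Theorem~\ref{theorem:main1}, hence not properly F\o lner by Remark~\ref{rem:properly-Foelner-definition}(1), contradicting $(\ref{teo:Roe-proper-amenability:Foelner})$. If instead $X$ is amenable but not properly amenable, then by Corollary~\ref{cor:characamenable-nprpamen} we may write $X=Y_1\sqcup Y_2$ with $Y_1$ finite nonempty, $Y_2$ non-amenable, and $d(Y_1,Y_2)=\infty$; since a finite-propagation operator has no nonzero entries between $Y_1$ and $Y_2$, this yields a $*$-isomorphism $C^*_\mathrm{u}(X)\cong C^*_\mathrm{u}(Y_2)\oplus M_{|Y_1|}(\C)$. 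Then Theorem~\ref{prop:properly-Foelner-direct-sum-B} forces $C^*_\mathrm{u}(Y_2)$ to be properly F\o lner, hence F\o lner, hence (Theorem~\ref{theorem:main1}) $Y_2$ amenable --- a contradiction. This closes the cycle.

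The step I expect to be the crux is $(\ref{teo:Roe-proper-amenability:coarse})\Rightarrow(\ref{teo:Roe-proper-amenability:algebraic})$: one must produce a F\o lner subspace that literally contains the prescribed finite set $\cF$, and a priori enlarging the ``bulk'' $P_F\,C^*_\mathrm{u,alg}(X)\,P_F$ by $\mathrm{span}(\cF)$ could destroy the F\o lner estimate. The resolution above exploits the fact that $\mathrm{span}(\cF)$ has fixed dimension $|\cF|$ while $\dim\bigl(P_F\,C^*_\mathrm{u,alg}(X)\,P_F\bigr)=|F|^2$ grows quadratically in $|F|$, and that proper amenability --- through Lemma~\ref{lem:proper-cardinality} --- allows $|F|$ to be taken arbitrarily large; the rest is the same cardinality bookkeeping already used for Theorems~\ref{theorem:alg-amen-to-Folner} and~\ref{theorem:main1}.
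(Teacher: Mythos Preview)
Your proof is correct and follows essentially the same cyclic scheme as the paper's: the steps $(\ref{teo:Roe-proper-amenability:algebraic})\Rightarrow(\ref{teo:Roe-proper-amenability:Foelner})$ and $(\ref{teo:Roe-proper-amenability:Foelner})\Rightarrow(\ref{teo:Roe-proper-amenability:coarse})$ match the paper's argument line for line (Theorem~\ref{theorem:alg-amen-to-Folner}(ii) for the former; Corollary~\ref{cor:characamenable-nprpamen} plus Theorem~\ref{prop:properly-Foelner-direct-sum-B} and Theorem~\ref{theorem:main1} for the latter). For $(\ref{teo:Roe-proper-amenability:coarse})\Rightarrow(\ref{teo:Roe-proper-amenability:algebraic})$ the paper simply refers to \cite[Theorem~6.3]{ALLW-1}, whereas you supply a self-contained argument: your device of enlarging $P_F\,C^*_\mathrm{u,alg}(X)\,P_F$ by $\mathrm{span}(\cF)$ and invoking Lemma~\ref{lem:proper-cardinality} to make $|F|$ absorb the fixed-dimensional perturbation is a clean and correct way to force $\cF\subset W$ without spoiling the F\o lner estimate.
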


\begin{proof}
 (\ref{teo:Roe-proper-amenability:coarse}) $\Rightarrow$ (\ref{teo:Roe-proper-amenability:algebraic}): 
 This follows as in Theorem~\ref{theorem:main1} (see also Theorem~6.3 in \cite{ALLW-1}).
 
 (\ref{teo:Roe-proper-amenability:algebraic}) $\Rightarrow$ (\ref{teo:Roe-proper-amenability:Foelner}): 
 This follows directly from Theorem~\ref{theorem:alg-amen-to-Folner}(ii).
 
 (\ref{teo:Roe-proper-amenability:Foelner}) $\Rightarrow$ (\ref{teo:Roe-proper-amenability:coarse}): 
 Since $\cA : = C^*_\mathrm{u}(X) $ is properly F\o lner, and in particular F\o lner, we know from Theorem~\ref{theorem:main1} that $X$ is amenable. 
 Now suppose $X$ were not properly amenable hence not finite. 
By Corollary~\ref{cor:characamenable-nprpamen}, there would be a decomposition $X = Y_1 \sqcup Y_2$ 
with $Y_1$ being finite and non-empty, $Y_2$ being non-amenable, and $d(y_1, y_2) = \infty$ for $y_1 \in Y_1$, $y_2 \in Y_2$. 
We obtain therefore two central projections $P_{Y_1}$ and $P_{Y_2}$ in 
$\C_\mathrm{u}(X)$ which add up to the unit. Note that they are also central in $\cA$. 
Thus setting $\cA_i = P_{Y_i} \cA P_{Y_i}$ for $i = 1,2$, we arrive at a direct sum decomposition 
$\cA = \cA_1 \oplus \cA_2$.  
Since $\cA_1$ is finite-dimensional, Proposition~\ref{prop:properly-Foelner-direct-sum-B} tells us that 
 $\cA_2 = C^*_\mathrm{u}(Y_2)$ is properly F\o lner. But by Theorem~\ref{theorem:main1}, 
that implies the amenability of the space $Y_2$, which is contradictory to our assumption about $Y_2$. 
\end{proof}

The following result shows that the F\o lner sequence for an operator in the uniform Roe-algebra can be chosen 
within the translation algebra.

\begin{corollary}\label{cor:inner-sequence}
Let $(X,d)$ be an extended amenable metric space with bounded geometry. 
Then any operator $T\in C_\mathrm{u}^*(X)$ has a F\o lner sequence which can be chosen within the translation algebra
$\C_\mathrm{u}(X)$.
\end{corollary}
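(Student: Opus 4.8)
The plan is to produce the F\o lner sequence concretely, using characteristic functions of F\o lner subsets of $X$. The first observation is that for any $F\subseteq X$ the orthogonal projection $P_F$ onto $\ell^2(F)$ (equivalently, the multiplication operator given by the characteristic function of $F$) already belongs to $\C_\mathrm{u}(X)$: indeed $P_F=V_{\mathrm{id}_F}$, where $\mathrm{id}_F\colon F\to F$ is the identity partial translation, whose graph is trivially controlled. When $F$ is finite and nonempty, $P_F$ is a nonzero finite-rank projection with $\|P_F\|_2=\sqrt{|F|}$. Hence, recalling Definition~\ref{def:Foelner2}, it suffices to show that for the given $T\in C_\mathrm{u}^*(X)$ and every $\varepsilon>0$ there is a finite nonempty subset $F\subseteq X$ with $\|TP_F-P_FT\|_2<\varepsilon\,\|P_F\|_2$; applying this with $\varepsilon=1/n$ then produces a sequence $\{P_{F_n}\}_n\subset\C_\mathrm{u}(X)$ as required.

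Second, I would pass to finite-propagation operators. Since $C_\mathrm{u}^*(X)$ is the norm closure of $C^*_\mathrm{u,alg}(X)$, which by Remark~\ref{rem:uniform-Roe-definition} consists exactly of the operators of finite propagation, fix $S\in C^*_\mathrm{u,alg}(X)$ of propagation at most $R$ with $\|T-S\|$ small. Then $\|TP_F-P_FT\|_2\leq\|SP_F-P_FS\|_2+2\|T-S\|\,\|P_F\|_2$, so it remains to estimate the commutator of $S$ with $P_F$.

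The key step --- and the only one requiring care --- is the boundary estimate for $SP_F-P_FS$. Its $(x,y)$ matrix entry is $S_{xy}$ times $(P_F)_{yy}-(P_F)_{xx}$, which vanishes unless $d(x,y)\leq R$ and exactly one of $x,y$ lies in $F$; in that case the point lying in $F$ belongs to $\partial^-_R F$ and the point lying outside $F$ belongs to $\partial^+_R F$, both of which are contained in $\partial_R F$. Invoking bounded geometry, $N_R:=\sup_{x\in X}|B_R(x)|<\infty$, so each point of $\partial_R F$ has at most $N_R$ admissible partners at distance $\leq R$; consequently $SP_F-P_FS$ has at most $2N_R|\partial_R F|$ nonzero entries, each of modulus at most $\|S\|$, giving $\|SP_F-P_FS\|_2^2\leq 2\|S\|^2 N_R\,|\partial_R F|$. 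Dividing by $\|P_F\|_2^2=|F|$ and choosing an $(R,\delta)$-F\o lner set $F$ with $\delta$ sufficiently small relative to $\varepsilon$, $N_R$ and $\|S\|$ (possible because $X$ is amenable) makes $\|SP_F-P_FS\|_2/\|P_F\|_2$ arbitrarily small; combined with the approximation term $2\|T-S\|$ this yields the desired inequality. I do not expect a genuine obstacle here: the only points to watch are using the correct boundary ($\partial_R F$, via the one-sided boundaries) in the counting, handling the trivial degenerate case $S=0$, and checking that $P_F$ lies in the translation algebra itself rather than merely in the algebraic uniform Roe algebra.
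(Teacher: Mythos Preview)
Your proposal is correct and follows essentially the same approach as the paper: both use the projections $P_F$ onto F\o lner subsets (noting these lie in $\C_\mathrm{u}(X)$), bound the commutator with a finite-propagation operator via the $R$-boundary $\partial_R F$, and then extend to all of $C_\mathrm{u}^*(X)$ by density. The only difference is in the Hilbert--Schmidt estimate: the paper writes $[T,P_F]=P_{\partial^+_R F}TP_{\partial^-_R F}-P_{\partial^-_R F}TP_{\partial^+_R F}$ and applies $\|AB\|_2\le\|A\|\,\|B\|_2$ to get $\|[T,P_F]\|_2\le 2\|T\|\sqrt{|\partial_R F|}$ directly, without invoking the bounded-geometry constant $N_R$, whereas you count matrix entries; your argument is slightly less sharp but equally valid.
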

\begin{proof}
Let $T\in\C_\mathrm{u}(X)$ be of propagation $R$, $F\subset X$ and denote by $P_F$, the characteristic function over $F$.
Note that since $p(P_F)=0$, we have that $P_F\in\C_\mathrm{u}(X)$.
Recalling the notions of outer and inner boundary given in 
Subsection~\ref{subsec:amenability-metric}, one has for the commutator of $T$
with $P_F$
\[
[T, P_F] = P_{\partial^+_R F} T P_{\partial^-_R F} - P_{\partial^-_R F} T P_{\partial^+_R F}\;.
\]
Therefore, since $(X,d)$ is amenable, we can choose $F\in\mathrm{\mbox{F\o l}}\left(R,\frac{\varepsilon^2}{4\|T\|^2}\right)$ 
and
\[
 \frac{\|[T, P_F]\|_2}{\|P_F\|_2} =\frac{\| P_{\partial^+_R F} T P_{\partial^-_R F} - P_{\partial^-_R F} T P_{\partial^+_R F}\|_2}{\|P_F\|_2 }
                                                      \leq 2 \|T\|  \left( \frac{|\partial_R F|}{|F|}\right)^{\frac12} < \varepsilon\;.
\]
Finally, since $\C_\mathrm{u}(X)$ is dense in $C_\mathrm{u}^*(X)$ it follows that any operator in $C_\mathrm{u}^*(X)$ has a F\o lner sequence
belonging to the translation algebra $\C_\mathrm{u}(X)$.
\end{proof}

\begin{remark}\label{rem:property-A}
Recall that $(X,d)$ has property~A if and only if $C_\mathrm{u}^*(X)$ is a nuclear $C^*$-algebra (cf., \cite[Theorem 5.5.7]{bBrown08}). 
Nevertheless, from Remark~\ref{rem:A} it follows that $(X,d)$ having property~A is, in general, independent 
of the uniform Roe $C^*$-algebra $C_\mathrm{u}^*(X)$ being F\o lner $C^*$-algebra. For example,
$C_\mathrm{u}^*(\mathbb{F}_2)$ is not a F\o lner $C^*$-algebra and $X=\mathbb{F}_2$
has property~A. Moreover, the box space 
$X:=\text{Box}_{\Gamma_n}(\mathbb{F}_2)$  is an amenable metric space with bounded geometry
which does {\em not} have property A. By Theorem~\ref{theorem:main1} the corresponding uniform Roe algebra 
$C_\mathrm{u}^*(X)$ is a F\o lner $C^*$-algebra.
\end{remark}

%%%%%%%%%%%%%%%%%%%%%%%%%%%%%%%%%%%%%%%%%%%%%%%%%%%%%
\subsection{The trace space of uniform Roe algebras}\label{subsec:trace-space}
%%%%%%%%%%%%%%%%%%%%%%%%%%%%%%%%%%%%%%%%%%%%%%%%%%%

The analysis of the trace space of a $C^*$-algebra is a fundamental problem one faces in the study of $C^*$-algebras. 
Recall that, in the context of amenability for discrete groups, $\Gamma$ is amenable 
if and only if every trace of the reduced group $C^*$-algebra is amenable. Other important classes of $C^*$-algebras 
like nuclear $C^*$-algebras or $C^*$-algebras with the weak expectation property (WEP) also satisfy that
any trace is amenable (see, e.g., Chapters 3 and 4 in \cite{Brown06} for details).

From Theorems~\ref{theorem:characFolner2} and~\ref{theorem:main1}, we see that for a uniform Roe algebra, 
the existence of a trace is equivalent to that of an amenable trace. 
One may ask a more refined question: is every trace of a uniform Roe algebra amenable? 
We are going to answer this question affirmatively in this section. 

In the following we will assume for simplicity that $\cA$ is a $C^*$-algebra. Note that if $\C_\mathrm{u}(X)\subset \cA$,
then $\ell^\infty(X)\subset\cA$, since any bounded function on $X$ may be approximated in norm by step functions
lying in $\C_\mathrm{u}(X)$.

\begin{lemma}\label{lem:extension}
Let $\cA$ be a $C^*$-algebra that contains the translation algebra $\C_\mathrm{u}(X)$ as a dense subalgebra. Then:
\begin{enumerate}
\item The standard representation $\pi$ of $\C_\mathrm{u}(X)$ on $\cL(\ell^2(X))$ extends to $\cA$.
\item There is a conditional expectation $E\colon\cA\to\ell^\infty(X)$.
\end{enumerate}
\end{lemma}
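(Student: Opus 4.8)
The plan is to prove (1) first and then deduce (2) from it. For (1), since $\cA$ is a $C^*$-completion of $\C_\mathrm{u}(X)$, it is enough to establish the norm bound $\|\pi(a)\|_{\cL(\ell^2(X))}\le\|a\|_\cA$ for all $a\in\C_\mathrm{u}(X)$: then $\pi$ extends by continuity to a linear contraction $\ol\pi\colon\cA\to\cL(\ell^2(X))$ which, being multiplicative and $*$-preserving on the dense subalgebra $\C_\mathrm{u}(X)$, is a $*$-homomorphism restricting to $\pi$. The crux is to show that for every finitely supported unit vector $\zeta\in\ell^2(X)$ the vector state $\omega_\zeta(T)=\langle\zeta,T\zeta\rangle$, restricted to $\C_\mathrm{u}(X)$, extends to a state $\psi_\zeta$ of $\cA$. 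To this end put $S=\mathrm{supp}(\zeta)$; the matrix units $E_{x,y}$ with $x,y\in S$ lie in $\C_\mathrm{u}(X)$ and span a finite-dimensional $C^*$-subalgebra $M_S\cong M_{|S|}(\C)$ with unit $P_S$. Since $mP_S=m$ for $m\in M_S$, one checks that $M_S+\C\1_\cA=M_S\oplus\C(\1_\cA-P_S)$ is a \emph{unital} $C^*$-subalgebra of $\cA$, and $m+\lambda(\1_\cA-P_S)\mapsto\langle\zeta,m\zeta\rangle$ is a state on it (it sends $\1_\cA$ to $\|\zeta\|^2=1$); extend it to a state $\psi_\zeta$ on $\cA$ by the standard Hahn–Banach argument for unital $C^*$-subalgebras.

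Now $\psi_\zeta(P_S)=1$, so Cauchy–Schwarz for the positive functional $\psi_\zeta$ gives $\psi_\zeta((\1_\cA-P_S)a)=\psi_\zeta(a(\1_\cA-P_S))=0$ for all $a\in\cA$, whence $\psi_\zeta(a)=\psi_\zeta(P_S a P_S)$. Since $\C_\mathrm{u}(X)$ is linearly spanned by the partial translations $V_t$, and $P_S V_t P_S\in M_S$, we get $\psi_\zeta(V_t)=\langle\zeta,V_t\zeta\rangle$, so $\psi_\zeta|_{\C_\mathrm{u}(X)}=\omega_\zeta\circ\pi$. Finally, for $a\in\C_\mathrm{u}(X)$ the operator $\pi(a^*a)$ is positive, so using that the finitely supported unit vectors are dense in the unit sphere of $\ell^2(X)$,
\[
\|\pi(a)\|^2=\|\pi(a^*a)\|=\sup_\zeta\,\langle\zeta,\pi(a^*a)\zeta\rangle=\sup_\zeta\,\psi_\zeta(a^*a)\le\|a^*a\|_\cA=\|a\|_\cA^2,
\]
which is the desired bound; this proves (1).

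For (2), I would combine (1) with the diagonal conditional expectation. By (1), $\ol\pi\colon\cA\to\cL(\ell^2(X))$ restricts on $\ell^\infty(X)\subseteq\cA$ (the norm closure of the step functions; cf.\ the remark preceding this lemma) to an isometric $*$-isomorphism onto the diagonal masa of $\cL(\ell^2(X))$. Let $E_0\colon\cL(\ell^2(X))\to\ell^\infty(X)$ be the canonical conditional expectation onto the diagonal (equivalently, the average of $T\mapsto uTu^*$ over the compact group $\prod_{x\in X}\mathbb{T}$ of diagonal unitaries, taken in the weak operator topology), a unital completely positive map fixing the diagonal. Composing and identifying, $E:=E_0\circ\ol\pi\colon\cA\to\ell^\infty(X)\subseteq\cA$ is a unital completely positive map which is the identity on $\ell^\infty(X)$, hence an idempotent with range $\ell^\infty(X)$; by Tomiyama's theorem $E$ is a conditional expectation.

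The main obstacle is the norm estimate in (1). The decisive point is that a finitely supported vector state of the standard representation is ``concentrated'' on a finite matrix block $M_S$, which allows it to be transported to the abstract completion $\cA$ by extending a state of the unital subalgebra $M_S+\C\1_\cA$ and then recovering its values on all of $\C_\mathrm{u}(X)$ via $\psi_\zeta(\cdot)=\psi_\zeta(P_S\cdot P_S)$. Once (1) is available, (2) is essentially formal.
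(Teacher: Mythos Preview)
Your proof is correct and rests on the same core observation as the paper's: compressing by the projection $P_S$ onto a finite set lands one in a finite-dimensional $*$-subalgebra of $\C_\mathrm{u}(X)$, where the $C^*$-norm is uniquely determined, so the abstract norm $\|\cdot\|_\cA$ and the concrete norm on $\cL(\ell^2(X))$ must agree there. The paper executes this more directly: for $T\in\C_\mathrm{u}(X)$ with $\|T\|_\mathrm{r}=1$, it picks a finite $F$ almost supporting a near-maximizing vector and observes $\|T\|_\cA^2\geq\|TP_FT^*\|_\cA=\|P_FT^*TP_F\|_\cA=\|P_FT^*TP_F\|_\mathrm{r}\geq(1-2\varepsilon)^2$, the middle equality holding because $P_F\C_\mathrm{u}(X)P_F$ is finite-dimensional. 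Your route via extending the vector state $\omega_\zeta$ from $M_S\oplus\C(\1-P_S)$ by Hahn--Banach and then recovering $\psi_\zeta(\cdot)=\psi_\zeta(P_S\cdot P_S)$ by Cauchy--Schwarz reaches the same conclusion with slightly more machinery; the paper's version is shorter, yours makes the role of states more explicit. Part~(2) is handled identically in both.

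One small caveat: in the \emph{extended} metric setting the matrix unit $E_{x,y}$ lies in $\C_\mathrm{u}(X)$ only when $d(x,y)<\infty$, so if $S$ straddles coarse components your $M_S$ is not the full $M_{|S|}(\C)$. This is harmless, since operators in $\C_\mathrm{u}(X)$ preserve the decomposition $\ell^2(X)=\bigoplus_i\ell^2(X_i)$ and you may take $\zeta$ supported in a single component; alternatively, replace $M_S$ by the block-diagonal algebra $\bigoplus_i M_{S\cap X_i}$, which still contains every $P_SV_tP_S$, and the rest of the argument is unchanged.
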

\begin{proof}
(1) The statement is equivalent to that the identity map on $\C_\mathrm{u}(X)$ extends to a $*$-homomorphism from $\cA$ to $C^*_{u}(X)$. For this it suffices to show that the $C^*$-norm $\|\cdot\|_{\cA}$ restricted to $\C_\mathrm{u}(X)$ is greater than or equal to the $C^*$-norm $\|\cdot\|_\mathrm{r}$ given by the standard representation of $\C_\mathrm{u}(X)$. To this end we are going to show that for any $T \in \C_\mathrm{u}(X)$ with $\|T\|_\mathrm{r} = 1$ and $\varepsilon > 0$, we have $\|T\|_{\cA} \geq 1 - 2\varepsilon$. 

To begin with, we pick a unit vector $\xi \in \ell^2(X)$ such that $\|T \xi\|_2 \geq 1 - \varepsilon$. Now as $1 = \| \xi \|_2^2 = \sum_{x\in X} |\xi(x)|^2$, we may find a finite subset $F \subset X$ such that $\| P_F \xi - \xi \|_2 \leq \varepsilon$; thus 
\[
 \|T P_F \xi \|_2 \geq \|T \xi \|_2 - \|T (\xi - P_F \xi)\|_2 \geq (1- \varepsilon ) - \varepsilon = (1 - 2\varepsilon) \|\xi\|_2 \; .
\]
Hence $\| T P_F T^* \|_\mathrm{r} = \| T P_F \|_\mathrm{r}^2  \geq (1 - 2\varepsilon)^2$. 
Since $F$ is finite and by the properties of $C^*$-norms we have
\[
\| T P_F T^* \|_{\cA} =  \| P_F T^*T P_F \|_{\cA} =
\| P_F T^*T P_F \|_\mathrm{r} = \| T P_F T^* \|_\mathrm{r}  \geq (1 - 2\varepsilon )^2\;.
\]
As $T T^* \geq T P_F T^*$ in $\cA$, we have $\| T \|_{\cA} = \sqrt{ \| T T^*  \|_{\cA} } \geq \sqrt{ \| T P_F T^* \|_{\cA} } \geq 1 - 2\varepsilon$.

(2) The composition of the standard representation $\pi$ on $\cL(\ell^2(X))$ with the conditional expectation 
$E_0\colon \mathcal{L}(\ell^2(X)) \to \ell^\infty (X)$ gives a conditional expectation $E$ as required.
\end{proof}

\begin{lemma}\label{lem:factor-tracial-state}
Let $\cA$ be a $C^*$-algebra that contains the translation algebra $\C_\mathrm{u}(X)$ as a dense subalgebra and consider a tracial state
$\tau$ of $\cA$. Then $\tau = \tau|_{\ell^\infty (X)} \circ E$.
\end{lemma}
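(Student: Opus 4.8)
The plan is to show that a tracial state $\tau$ on $\cA$ is annihilated by the "off-diagonal part," i.e. that $\tau(T) = \tau(E(T))$ for all $T \in \cA$, where $E \colon \cA \to \ell^\infty(X)$ is the conditional expectation from Lemma~\ref{lem:extension}(2). Since both $\tau$ and $\tau|_{\ell^\infty(X)} \circ E$ are norm-continuous and $\C_\mathrm{u}(X)$ is dense in $\cA$, it suffices to check the identity on the linear span of the generators $V_t f$ with $t$ a partial translation and $f \in \ell^\infty(X)$ a step function (so that $V_t f \in \C_\mathrm{u}(X)$); in fact, since $\ell^\infty(X) \cap \C_\mathrm{u}(X)$ is spanned by characteristic functions $P_F$, it is enough to treat elements of the form $V_t P_F$.

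First I would reduce to the "genuinely off-diagonal" case. Given a partial translation $t \colon A \to B$, decompose its domain into the fixed-point set $A_0 = \{x \in A : t(x) = x\}$ and its complement $A_1 = A \setminus A_0$. On $A_0$ the operator $V_t$ restricts to the diagonal projection $P_{A_0} \in \ell^\infty(X)$, which is already in the image of $E$; on $A_1$ the partial translation $t_1 := t|_{A_1}$ satisfies $t_1(x) \neq x$ for all $x$, and one checks $E(V_{t_1}) = 0$ directly from the matrix formula~(\ref{partial-trans}), since the diagonal entries of $V_{t_1}$ all vanish. So it remains to prove $\tau(V_{t_1} P_F) = 0$ whenever $t_1$ is a fixed-point-free partial translation with controlled graph; absorbing $P_F$ into the domain we may simply assume $\tau(V_t) = 0$ for every partial translation $t$ with $t(x) \neq x$ on all of $\mathrm{dom}(t)$.

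The main obstacle — and the heart of the argument — is this last vanishing statement. The standard trick is a "coloring" argument: because $(X,d)$ has bounded geometry and $t$ has bounded propagation, the graph on $\mathrm{dom}(t)$ obtained by joining $x$ to $t(x)$ (and iterating) has bounded degree, so one can partition $\mathrm{dom}(t)$ into finitely many pieces $D_1, \dots, D_N$ such that for each $j$ the sets $D_j$ and $t(D_j)$ are disjoint. Writing $V_t = \sum_{j=1}^N V_t P_{D_j}$, it suffices to show $\tau(V_t P_{D_j}) = 0$ for each $j$, i.e. we may assume $\mathrm{dom}(t) \cap \mathrm{ran}(t) = \emptyset$. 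In that case $P_{\mathrm{dom}(t)}$ and $P_{\mathrm{ran}(t)}$ are orthogonal projections, $u := V_t + V_t^*$ is a self-adjoint partial isometry with $u^2 = P_{\mathrm{dom}(t)} + P_{\mathrm{ran}(t)} =: p$, and conjugation by $u + (1-p)$ — a self-adjoint unitary in $\cA$ — swaps $\mathrm{dom}(t)$ with $\mathrm{ran}(t)$; hence it conjugates $V_t P_{\mathrm{dom}(t)}$ to $V_t^* P_{\mathrm{ran}(t)} = (V_t P_{\mathrm{dom}(t)})^*$. Applying the trace and using $\tau(x) = \overline{\tau(x^*)}$ together with $\tau(V_t P_{\mathrm{dom}(t)}) = \tau(V_t)$, one gets $\tau(V_t) = \overline{\tau(V_t)}$, which only forces reality, not vanishing. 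To actually kill it one iterates: since $\mathrm{dom}(t)$ and $\mathrm{ran}(t)$ are disjoint one can further split $\mathrm{dom}(t)$ into three pieces permuted cyclically by a unitary built from $t$ (using boundedness of geometry again to realize a $3$-coloring), so that $V_t$ restricted to each piece is conjugate to $\omega$ times itself for a primitive cube root of unity $\omega$; traciality then gives $\tau(V_t P_{D_j}) = \omega\, \tau(V_t P_{D_j})$, forcing $\tau(V_t P_{D_j}) = 0$. Summing over the finitely many pieces yields $\tau(V_t) = 0$, completing the proof. (Alternatively, one can invoke the known description of traces on $C^*_\mathrm{u}(X)$ via the conditional expectation onto $\ell^\infty(X)$ combined with Lemma~\ref{lem:factor-tracial-state}'s companion results, but the coloring argument above is self-contained and uses only bounded geometry plus the trace property.)
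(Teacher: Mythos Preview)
Your overall architecture matches the paper's: reduce to partial translations, split off the fixed-point set, use a coloring argument to reduce to the case $\mathrm{dom}(t)\cap\mathrm{ran}(t)=\varnothing$, and then show $\tau(V_t)=0$ in that base case. The coloring step is fine, though you invoke bounded geometry unnecessarily: the paper just observes that the functional graph of $t$ (no self-loops, every vertex of degree at most $2$) is $3$-colorable by the greedy algorithm, so three pieces always suffice and no metric hypothesis enters.

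The real problem is your treatment of the base case $\mathrm{dom}(t)\cap\mathrm{ran}(t)=\varnothing$. Your conjugation by the self-adjoint unitary $u+(1-p)$ correctly yields $\tau(V_t)=\tau(V_t^*)=\overline{\tau(V_t)}$, i.e.\ only reality. Your proposed fix --- ``split $\mathrm{dom}(t)$ into three pieces permuted cyclically by a unitary built from $t$'' --- is not a valid construction: a partial translation $t$ with disjoint domain and range does not induce any cyclic permutation of pieces of $\mathrm{dom}(t)$, and there is no mechanism by which ``$V_t$ restricted to each piece is conjugate to $\omega$ times itself'' emerges from such a splitting. As written, this step is a gap.

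The paper dispatches this case in one line using traciality and orthogonality alone:
\[
\tau(V_t)=\tau\big(P_{\mathrm{ran}(t)}\,V_t\,P_{\mathrm{dom}(t)}\big)=\tau\big(V_t\,P_{\mathrm{dom}(t)}\,P_{\mathrm{ran}(t)}\big)=\tau(0)=0.
\]
If you prefer a conjugation argument, a correct one is available: the diagonal unitary $U=e^{i\theta}P_{\mathrm{ran}(t)}+(1-P_{\mathrm{ran}(t)})\in\ell^\infty(X)\subset\cA$ satisfies $UV_tU^*=e^{i\theta}V_t$, so $\tau(V_t)=e^{i\theta}\tau(V_t)$ for all $\theta$, forcing $\tau(V_t)=0$. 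Either of these replaces your cube-root paragraph and closes the gap.
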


\begin{proof}
 We first prove $\tau (V_t) = \left(\tau|_{\ell^\infty (X)} \circ E\right) (V_t) = 0$ for any partial translation $t$ with the property that 
 $\mathrm{dom}(t) \cap \mathrm{ran}(t) = \varnothing$. Indeed, on the one hand, 
\[
 \tau (V_t) = \tau (P_{\mathrm{ran}(t)} V_t P_{\mathrm{dom}(t)}) = \tau (V_t P_{\mathrm{dom}(t)} P_{\mathrm{ran}(t)} ) = 0\;,
\] 
while, on the other hand, since $\langle \delta_x, V_t \delta_x \rangle = 0$ for any $x \in X$, we have $E(V_t) = 0$. 
 
Next we extend the result to any partial translation $t$ without a fixed point, i.e., such that for any 
$x \in \mathrm{dom}(t)$, $t(x) \not= x$. To this end, consider the oriented graph whose vertices are 
labeled by points in $\mathrm{dom}(t)$, so that for any $x, y \in \mathrm{dom}(t)$, there is an edge from $x$ to $y$ 
if and only if $y = t(x)$. Since $t$ is a partial translation, it follows that every vertex has at most one incoming edge and at most one outgoing edge, and since there is no fixed point, there is no self-loop in the graph. The standard greedy coloring algorithm in graph theory enables us to color the vertices of this graph with 3 colors, i.e., decompose $\mathrm{dom}(t)$ into a disjoint union $Y_1 \sqcup Y_2 \sqcup Y_3$, so that no vertices of the same color are adjacent. Let $t_i = t|_{Y_i}$ for $i = 1,2,3$. Then by our construction of the graph and the coloring scheme, we have for each $i \in \{1,2,3\}$, for any $x,y \in \mathrm{dom}(t_i)$, $t(x) \not = y$; thus $\mathrm{dom}(t_i) \cap \mathrm{ran}(t_i) = \varnothing$. Hence we may apply the previous case to the sum $V_t = V_{t_1} + V_{t_2} + V_{t_3}$ and prove $\tau (V_t) = \left(\tau|_{\ell^\infty (X)} \circ E \right)(V_t)=0$ by additivity.
 
Now for an arbitrary partial translation $t$, we let $X^t = \{x \in \mathrm{dom}(t) \ | \ t(x) = x\}$ be the set of its fixed points. Thus we can decompose $V_t$ as $P_{X^t} + V_{t'}$, where $t'$ is a partial translation without fixed points. Since $E(P_{X^t}) = P_{X^t}$, we have $\tau (P_{X^t}) = \tau|_{\ell^\infty (X)} \circ E (P_{X^t})$. The equality $\tau (V_t) = \tau|_{\ell^\infty (X)} \circ E (V_t)$ follows by additivity. Therefore we still have the claimed equality.
 
Since every element in $\C_\mathrm{u}(X)$ is a linear combination of partial isometries $V_t$ associated to partial translations $t$, 
the desired identity holds on $\C_\mathrm{u}(X)$. Finally, we extend the identity to $\cA$ by continuity. 
\end{proof}

For the next result recall that by one of the Riesz representation theorems, any mean $\mu$ on $(X,d)$ induces uniquely a state $\phi_\mu\colon \ell^\infty(X)\to \C$ such that $\mu(Y) = \phi_\mu(P_Y)$ for any subset $Y \subset X$. Moreover, it is immediate that $\mu$ is invariant under partial translations 
(cf., Definition~\ref{def:part-transl}) if and only if $\phi_\mu$ is invariant in the sense that $\phi_\mu(f) = \phi_\mu(f \circ t)$ for any partial translation $t$ and any $f \in \ell^\infty(X)$ supported in $\operatorname{ran}(t)$. It is clear that $\mu$ is also uniquely determined by $\phi_\mu$.

\begin{theorem}
 Let $X$ be an extended metric space with bounded geometry and $\cA$ be a $C^*$-algebra that contains the translation algebra 
 $\C_\mathrm{u}(X)$ as a dense $*$-subalgebra. Let $\tau$ be a linear functional on $\cA$ and consider the conditional expectation $E\colon\cA\to\ell^\infty(X)$. 
 Then the following conditions are equivalent:
 \begin{enumerate}
  \item $\tau$ is an amenable tracial state.
  \item $\tau$ is a tracial state.
  \item $\tau = \phi_\mu \circ E$ for a mean $\mu$ on $(X,d)$ which is invariant under partial translations.
 \end{enumerate}
\end{theorem}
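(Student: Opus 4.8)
The plan is to prove the cycle of implications $(1) \Rightarrow (2) \Rightarrow (3) \Rightarrow (1)$, using Lemma~\ref{lem:factor-tracial-state} as the crucial structural fact. The implication $(1) \Rightarrow (2)$ is trivial: an amenable tracial state is by definition a tracial state. For $(2) \Rightarrow (3)$, suppose $\tau$ is a tracial state on $\cA$. By Lemma~\ref{lem:factor-tracial-state} we have $\tau = \tau|_{\ell^\infty(X)} \circ E$, so it suffices to show that $\tau|_{\ell^\infty(X)}$ is a state on $\ell^\infty(X)$ which is invariant under partial translations in the sense discussed just before the theorem; then we set $\phi_\mu := \tau|_{\ell^\infty(X)}$ and let $\mu$ be the associated mean. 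To verify invariance, let $t$ be a partial translation and $f \in \ell^\infty(X)$ supported in $\operatorname{ran}(t)$. Using the identity $f V_t = V_t (f \circ t)$ from the proof of Lemma~\ref{lem:same-representations} together with $V_t^* V_t = P_{\mathrm{dom}(t)}$, $V_t V_t^* = P_{\mathrm{ran}(t)}$, and the fact that $f$ is supported in $\operatorname{ran}(t)$ (so $f V_t V_t^* = f$), the trace property gives
\[
\tau(f) = \tau(f V_t V_t^*) = \tau(V_t^* f V_t) = \tau\big(V_t^* V_t (f\circ t)\big) = \tau\big(P_{\mathrm{dom}(t)} (f\circ t)\big) = \tau(f \circ t),
\]
where the last equality holds because $f \circ t$ is supported in $\mathrm{dom}(t)$. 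Hence $\phi_\mu$ is invariant, so $\mu$ is invariant under partial translations, and $(3)$ holds.

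For $(3) \Rightarrow (1)$, assume $\tau = \phi_\mu \circ E$ for an invariant mean $\mu$. First, $\tau$ is a state since $\phi_\mu$ is a state and $E$ is a unital positive (completely positive) map. To see that $\tau$ is tracial, it is enough to check the tracial identity on the generators $V_s, V_t$ of $\C_\mathrm{u}(X)$ (by density and continuity, and bilinearity); one computes $E(V_s V_t)$ and $E(V_t V_s)$ and shows $\phi_\mu$ takes the same value on both, using that $\phi_\mu$ only sees the diagonal and invariance under partial translations handles the ``off-diagonal cancellation.'' Concretely, $E(V_t)$ is the characteristic function of the fixed-point set $X^t$, and for a composition $V_s V_t = V_{s \circ t}$ the fixed-point set of $s \circ t$ is matched with that of $t \circ s$ via a partial translation, so invariance of $\mu$ yields $\phi_\mu(E(V_s V_t)) = \phi_\mu(E(V_t V_s))$. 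Finally, $\tau$ is amenable: by Lemma~\ref{lem:extension}(1) the standard representation of $\C_\mathrm{u}(X)$ extends to $\cA$, realizing $\cA$ inside $\cL(\ell^2(X))$, and Lemma~\ref{lem:extension}(2) gives the conditional expectation $E_0 \colon \cL(\ell^2(X)) \to \ell^\infty(X)$ with $E = E_0|_{\cA}$. Set $\psi := \phi_\mu \circ E_0$, a state on $\cL(\ell^2(X))$ extending $\tau$. One checks $\psi(TA) = \psi(AT)$ for $T \in \cL(\ell^2(X))$ and $A \in \cA$; since $\cA$ is generated by $\ell^\infty(X)$ and the $V_t$'s, and $\psi$ restricted to $\ell^\infty(X)$-multiplications is visibly a trace-like invariant state, the centralizer computation reduces to checking $\psi(T V_t) = \psi(V_t T)$, which again follows from the partial-translation invariance of $\mu$ applied after compressing $T$ by the conditional expectation.

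I expect the main obstacle to be the verification of the traciality in $(3) \Rightarrow (1)$ and the centralizer property (amenability) in that same implication, as both require carefully bookkeeping how $E$ and $E_0$ interact with products $V_s V_t$ and with the decomposition of a partial translation into fixed-point and fixed-point-free parts — essentially reprising the graph-coloring argument of Lemma~\ref{lem:factor-tracial-state}. The $\ell^\infty(X)$-invariance of $\mu$ must be leveraged at exactly the point where off-diagonal terms would otherwise obstruct the trace identity; making this rigorous for the hypertrace $\psi$ on all of $\cL(\ell^2(X))$, rather than just on $\cA$, is the delicate step, and it should be handled by reducing, via $E_0$, to the diagonal algebra $\ell^\infty(X)$ where invariance of $\mu$ directly applies.
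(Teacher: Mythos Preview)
Your proposal is correct and follows essentially the same route as the paper: the cycle $(1)\Rightarrow(2)\Rightarrow(3)\Rightarrow(1)$, with Lemma~\ref{lem:factor-tracial-state} for $(2)\Rightarrow(3)$ and the hypertrace $\psi=\phi_\mu\circ E_0$ on $\cL(\ell^2(X))$ for $(3)\Rightarrow(1)$. Two minor remarks: your separate verification of traciality in $(3)\Rightarrow(1)$ is redundant, since once $\psi$ is a hypertrace for $\pi(\cA)$ the tracial property of $\tau=\psi\circ\pi$ follows immediately; and the graph-coloring argument you anticipate is not needed here --- the paper simply computes $E_0(V_tT)(x)=T_{t(x),x}$ and $E_0(TV_t)(x)=T_{x,t^{-1}(x)}$ directly, observes $E_0(V_tT)=E_0(TV_t)\circ t$, and applies invariance of $\phi_\mu$.
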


\begin{proof}
 (1) $\Rightarrow$ (2) is obvious. To see (2) $\Rightarrow$ (3), we apply the Lemma~\ref{lem:factor-tracial-state} 
 and define $\phi := \tau|_{\ell^\infty (X)}$. That $\phi$ is a normalized positive functional follows from the fact that 
 $\tau$ is a state. Thus the formula $\mu(C) = \phi(P_C)$ for $C \subset X$ defines a mean on $X$. Invariance is a consequence of the trace property: let $(A,B,t)$ be a partial translation on $X$, then
\[ 
\mu(A)=\phi_\mu(P_A)=\phi(P_A)=\tau(V_t^* V_t)= \tau(V_t V_t^*) =\phi_\mu(P_B)=\mu(B)\;.
\]
 
To prove (3) $\Rightarrow$ (1), let us consider the standard representation $\pi$ of $\cA$ on $\cL(\ell^2(X))$ and the state $\psi = \phi \circ E_0$,
where $E_0$ is the conditional expectation from $\cL(\ell^2(X))$ onto $\ell^\infty(X)$. By Lemma~\ref{lem:extension}~(ii) we have $\tau = \psi \circ
\pi$. It remains to show that $\psi$ is a hypertrace for $\pi(\cA)$, i.e., $\psi (T \pi(a)) = \psi (\pi(a) T)$ for any $a \in \cA$ and $T \in
\cL(\ell^2(X))$ (cf., Definition~\ref{def:amenable}). It is enough to assume that $\pi(a)$ is an arbitrary partial isometry $V_t$, where $t$ is an arbitrary partial
translation on $X$. Using Equation~(\ref{partial-trans}), we obtain
 \begin{align*}
  E_0(V_t T) (x) & \ = \begin{cases}
                  T_{t(x)\,,\,x}  &, \ \mathrm{if}\ x \in \mathrm{dom}(t)\\
                  0 &, \ \mathrm{if}\ x \notin \mathrm{dom}(t)
                 \end{cases}\\
  E_0(T V_t) (x) & \ = \begin{cases}
                  T_{x\,,\, t^{-1}(x)} &, \ \mathrm{if}\ x \in \mathrm{ran}(t)\\
                  0 &, \ \mathrm{if}\ x \notin \mathrm{ran}(t)
                 \end{cases}        
 \end{align*}
 for any $x \in X$. Thus $\mathrm{supp}(E_0(V_t T)) = \mathrm{dom}(t)$, $\mathrm{supp}(E_0(T V_t)) = \mathrm{ran}(t)$ and $E_0(V_t T) = E_0(T V_t) \circ V_t$. Since $\phi$ is invariant, we have 
$\phi (E_0(V_t T)) = \phi (E_0(T V_t))$, which shows that $\psi$ is a hypertrace.
\end{proof}

We conclude by highlighting a direct consequence of the previous results and a natural question that arises out of it.
\begin{corollary}\label{amenable trace}
Let $(X,d)$ be an extended metric space with bounded geometry and denote by $\cA$ the uniform Roe algebra $C_\mathrm{u}^*(X)$ or the maximal uniform Roe-algebra $C_\mathrm{u,max}^*(X)$. Then $\cA$ has a tracial state if and only if $(X,d)$ is amenable. In this case every tracial state of $\cA$ is amenable.
\end{corollary}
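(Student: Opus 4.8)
The plan is to obtain this statement as a direct consequence of Theorem~\ref{theorem:main1} together with the theorem preceding this corollary (the characterization of the tracial states of a $C^*$-algebra containing $\C_\mathrm{u}(X)$ as a dense $*$-subalgebra); the only genuine work is a brief check that the hypotheses of those two results apply both to $\cA = C_\mathrm{u}^*(X)$ and to $\cA = C_\mathrm{u,max}^*(X)$.

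First I would record that both $C_\mathrm{u}^*(X)$ and $C_\mathrm{u,max}^*(X)$ are unital $C^*$-algebras, hence in particular unital pre-$C^*$-algebras, and that each of them contains the translation algebra $\C_\mathrm{u}(X)$ as a dense $*$-subalgebra. For $C_\mathrm{u}^*(X)$ this is immediate from Definition~\ref{def:Roe-u-max}. For $C_\mathrm{u,max}^*(X)$, density is again part of the definition of the universal enveloping $C^*$-algebra, while injectivity of the canonical $*$-homomorphism $\C_\mathrm{u}(X)\to C_\mathrm{u,max}^*(X)$ holds because the standard faithful representation of $\C_\mathrm{u}(X)$ on $\ell^2(X)$ is among the $*$-representations defining the universal $C^*$-norm; unitality is clear since $\1 = V_{\mathrm{id}}\in\C_\mathrm{u}(X)$.

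Granting these preliminaries, the equivalence ``$\cA$ has a tracial state if and only if $(X,d)$ is amenable'' is precisely the equivalence (\ref{theorem:main1:coarse}) $\Leftrightarrow$ (\ref{theorem:main1:state}) of Theorem~\ref{theorem:main1}, applied in turn to $\cA = C_\mathrm{u}^*(X)$ and to $\cA = C_\mathrm{u,max}^*(X)$. For the last assertion I would invoke the theorem preceding this corollary: there, conditions (1) and (2) are equivalent, so that on any $C^*$-algebra containing $\C_\mathrm{u}(X)$ as a dense $*$-subalgebra every tracial state is automatically an amenable trace. Since both $C_\mathrm{u}^*(X)$ and $C_\mathrm{u,max}^*(X)$ are such $C^*$-algebras, this yields the claim; and when $(X,d)$ fails to be amenable the assertion is vacuous, there being no tracial state in the first place.

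The only point that requires any care is the one singled out in the second paragraph: checking that $\C_\mathrm{u}(X)$ genuinely embeds as a dense $*$-subalgebra of the maximal uniform Roe algebra, and that the ambient object is a bona fide $C^*$-algebra --- so that the notion of amenable trace for $C_\mathrm{u,max}^*(X)$ is unambiguous, recalling from the discussion after Definition~\ref{def:amenable} that for $C^*$-algebras this notion does not depend on the chosen faithful embedding. Beyond this bookkeeping there is no real obstacle, as all the substance has already been established in Theorem~\ref{theorem:main1} and in the preceding theorem.
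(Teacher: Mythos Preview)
Your proposal is correct and matches the paper's approach: the corollary is presented there without proof, as ``a direct consequence of the previous results,'' meaning precisely Theorem~\ref{theorem:main1} for the first equivalence and the immediately preceding theorem (equivalence of conditions (1) and (2)) for the amenability of every tracial state. Your additional bookkeeping verifying that $\C_\mathrm{u}(X)$ embeds as a dense $*$-subalgebra of $C^*_\mathrm{u,max}(X)$ is a reasonable point to make explicit, though the paper takes it for granted.
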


Since quasidiagonality is a strengthening of amenability for a tracial state, we ask:

\begin{problem}\label{final question}
 Let $(X,d)$ be an extended metric space with bounded geometry and denote by $\cA$ the uniform Roe algebra $C_\mathrm{u}^*(X)$ or the maximal uniform
 Roe-algebra $C_\mathrm{u,max}^*(X)$. Then is every tracial state of $\cA$ quasidiagonal?
\end{problem}

We remark that in a breakthrough in the structure theory of $C^*$-algebras with the UCT (the universal coefficient theorem for $KK$-theory), 
Tikuisis, White and Winter (\cite{TWW15}) showed that all faithful traces on a separable nuclear $C^*$-algebra with UCT are quasidiagonal, while the question
of quasidiagonality of amenable traces on nonseparable algebras was subsequently considered by Gabe (\cite{Gabe17}). 

\begin{remark}
We would like to thank the anonymous referee for suggesting to us a nice argument showing that Problem~\ref{final question} has an affirmative answer if we assume that $(X,d)$ has property A. In this case, the maximal and the reduced uniform Roe algebras coincide. The argument from the referee uses the fact that $C_\mathrm{u}^*(X)$ is the $C^*$-algebra of the coarse groupoid $G(X)$ as defined in \cite{STY02}. The groupoid $G(X)$ is amenable in the presence of property A and may be written as a crossed product groupoid $\beta X \rtimes H$ for a metrizable $\sigma$-compact \'{e}tale groupoid $H$ (see \cite[Lemma~3.3]{STY02}). By \cite[Theorem 4.4]{CW05} and \cite[Theorem 6.3]{CW04}, for a tracial state $\tau$ of $C_\mathrm{u}^*(X)$, the quotient of $C_\mathrm{u}^*(X)$ by the trace kernel $I_\tau$ is canonically isomorphic to the $C^*$-algebra of an amenable groupoid $Y \rtimes H$, namely, the reduction of $\beta X \rtimes H$ to a closed invariant subset $Y$ of the unit space $\beta X$. Because the amenability of an $\sigma$-compact \'{e}tale groupoid is witnessed by a countable sequence of functions on it (see \cite[Definition~5.2]{STY02}), we may write $Y$ as $\displaystyle \varprojlim_{i} Y_i$, with each $Y_i$ being a metrizable $H$-invariant quotient of $Y$ such that $Y_i \rtimes H$ remains amenable. It follows that $\displaystyle C_\mathrm{u}^*(X) / I_\tau = \varinjlim_{i} C^*(Y_i \rtimes H)$, with each $C^*(Y_i \rtimes H)$ being separable and nuclear and satisfying the UCT (by \cite[Proposition 10.7]{Tu99}). Now $\tau$ factors through a faithful tracial state $\tau_0$ on $C_\mathrm{u}^*(X) / I_\tau$. Hence each $\displaystyle \tau_0|_{C^*(Y_i \rtimes H)}$ is quasidiagonal by \cite{TWW15}. It follows by the local nature of quasidiagonality that $\tau_0$ is also quasidiagonal, and thus so is $\tau$. 
\end{remark}

\vspace{1cm}

%%%%%%%%%%%%%%%%%%%%%%%%%%%%%%%%%%%%%%%%%%%%%%%%%%%%%%%%%%%%%%%%%%%%%%%%%%%%%%%
\paragraph{\bf Acknowledgements:}
The second-named author is partially supported by the DFG (SFB 878) and he wishes to thank James Gabe and Mikael R\o rdam 
for helpful discussions on the subject of properly infinite $C^*$-algebras.
The third-named author thanks Wilhelm Winter for his kind invitation to the Mathematics 
Department of the University of M\"unster in April 2014 and March-June 2016.
Finantial support was provided by the DFG through SFB 878, as well as, by a DAAD-grant during these visits. Part of the research was conducted during visits and workshops at Universitat Aut\`onoma de Barcelona, University of Copenhagen, University of M\"{u}nster and Institut Mittag-Leffler. The authors owe many thanks and great appreciation to these institutes and hosts for their hospitality. 

%%%%%%%%%%%%%%%%%%%%%%%%%%%%%%%%%%%%%%%%%%%%%%%%%%%%%%%%%%%%%%%%%%%%%%%%%%%%%%%
%%
%%                                  References
%%
%%%%%%%%%%%%%%%%%%%%%%%%%%%%%%%%%%%%%%%%%%%%%%%%%%%%%%%%%%%%%%%%%%%%%%%%%%%%%%%

\providecommand{\bysame}{\leavevmode\hbox to3em{\hrulefill}\thinspace}

\end{document}